\def\disp{\displaystyle}
\def\dref#1{(\ref{#1})}
\theoremstyle{plain}
\newtheorem{theorem}{Theorem}[section]
\newtheorem{lemma}{Lemma}[section]
\theoremstyle{definition}
\newtheorem{definition}{Definition}[section]
\newtheorem{remark}{Remark}[section]
\numberwithin{equation}{section}
\begin{document}

\title{\bf A new approach toward locally bounded global solutions to a  $3D$ chemotaxis-stokes system with  nonlinear diffusion and rotation}

\author{
Jiashan Zheng\thanks{Corresponding author.   E-mail address:
 zhengjiashan2008@163.com (J.Zheng)}
 \\
    School of Mathematics and Statistics Science,\\
     Ludong University, Yantai 264025,  P.R.China \\
}
\date{}


\maketitle \vspace{0.3cm}
\noindent
\begin{abstract}
We consider a degenerate quasilinear chemotaxis--Stokes
type involving rotation in the aggregative term,
$$
 \left\{
 \begin{array}{l}
   n_t+u\cdot\nabla n=\Delta n^m-\nabla\cdot(nS(x,n,c)\cdot\nabla c),\quad
x\in \Omega, t>0,\\
    c_t+u\cdot\nabla c=\Delta c-nc,\quad
x\in \Omega, t>0,\\
u_t+\nabla P=\Delta u+n\nabla \phi ,\quad
x\in \Omega, t>0,\\
\nabla\cdot u=0,\quad
x\in \Omega, t>0,\\
 \end{array}\right.\eqno(CF)
 $$
 where $\Omega\subseteq \mathbb{R}^3$ is a bounded  convex  domain with smooth boundary.
 Here
$ S\in C^2(\bar{\Omega}\times[0,\infty)^2;\mathbb{R}^{3\times3})$ is a matrix with $s_{i,j}\in  C^1( \bar{\Omega} \times [0, \infty)\times[0, \infty)).$
Moreover, $|S(x,n,c)| \leq  S_0(c)$ for all $(x,n,c)\in \bar{\Omega} \times [0, \infty)\times[0, \infty)$ with $S_0(c)$
nondecreasing on $[0,\infty)$.
 If
$$m>\frac{9}{8},
$$
then
for all reasonably regular initial data, a corresponding initial-boundary value
problem for $(CF)$
possesses a globally defined weak solution $(n,c,u)$. Moreover,  for any fixed $T > 0$ this solution is bounded in $\Omega\times  (0,T)$ in the sense that
$$
\|u(\cdot,t)\|_{L^\infty(\Omega)} +\|c(\cdot,t)\|_{W^{1,\infty}(\Omega)}+\|n(\cdot,t)\|_{L^\infty(\Omega)} \leq C ~~\mbox{for all}~~ t\in(0,T)
$$
is valid with some $C(T) > 0$.
In particularly, if $ S(x,n,c):=C_S$,
 this result extends of Tao and  Winkler (\cite{Tao71215}), while,
   if  fluid-free subcase of
   the flow of fluid is ignored or the fluid is
stationary in $(CF)$, $ S(x,n,c):=C_S$ and $N=3$,  this results
is consistent with the result of Theorem 2.1 of  Zheng and Wang  (\cite{Zhengddffsdsd6}).
In view of  some carefully analysis,
we can  establish some natural gradient-like structure
 of the
functional
$
\int_{\Omega}n(\cdot,t)\ln n(\cdot,t)+\int_{\Omega}|\nabla\sqrt{c}(\cdot,t)|^2+\int_{\Omega}|u(\cdot,t)|^2
$
of $(CF),$
 which is
 a new estimate of chemotaxis--Stokes system with rotation (see
 \cite{Bellomo1216,Cao22119,Winkler11215,Wang21215}).

\end{abstract}

\vspace{0.3cm}
\noindent {\bf\em Key words:}~Chemotaxis--fluid system;
Global existence;
Tensor-valued
sensitivity

\noindent {\bf\em 2010 Mathematics Subject Classification}:~
35K55, 35Q92, 35Q35, 92C17

\newpage
\section{Introduction}

We consider the following  chemotaxis-Stokes system  with porous medium
diffusion and rotation in the aggregation term:
\begin{equation}
 \left\{\begin{array}{ll}
   n_t+u\cdot\nabla n=\nabla\cdot(D(n)\nabla n) -\nabla\cdot( nS(x,n,c)\cdot\nabla c),\quad
x\in \Omega, t>0,\\
    c_t+u\cdot\nabla c=\Delta c-nc,\quad
x\in \Omega, t>0,\\
u_t+\nabla P=\Delta u+n\nabla \phi ,\quad
x\in \Omega, t>0,\\
\nabla\cdot u=0,\quad
x\in \Omega, t>0,\\
 \disp{(\nabla n-nS(x,n,c)\cdot\nabla c)\cdot\nu=\nabla c\cdot\nu=0,u=0,}\quad
x\in \partial\Omega, t>0,\\
\disp{n(x,0)=n_0(x),c(x,0)=c_0(x),u(x,0)=u_0(x),}\quad
x\in \Omega,\\
 \end{array}\right.\label{1.1}
\end{equation}
where
$ \Omega$ is a bounded  convex  domain in $\mathbb{R}^3$ 
 with smooth boundary
$\partial\Omega$,
$S(x, n, c)$ is a
chemotactic sensitivity tensor satisfying
\begin{equation}\label{x1.73142vghf48rtgyhu}
S\in C^2(\bar{\Omega}\times[0,\infty)^2;\mathbb{R}^{3\times3})
 \end{equation}
 and
 \begin{equation}\label{x1.73142vghf48gg}|S(x, n, c)|\leq  S_0(c) ~~~~\mbox{for all}~~ (x, n, c)\in\Omega\times [0,\infty)^2
 \end{equation}
with  some nondecreasing $S_0 : [0,\infty)\rightarrow \mathbb{R},$
 porous medium
diffusion function $D$ satisfies
\begin{equation}\label{ghnjmk9161gyyhuug}
D\in C^\iota_{loc} ([0,\infty))~~\mbox{for some}~~\iota>0,~~~
D(n)\geq C_Dn^{m-1}~~ \mbox{for all}~~ n>0
\end{equation}
with some $m \geq 1$.
Here $n$ and $c$ denote the bacterium density and the oxygen concentration, respectively,
and $u$ represents the velocity field of the
fluid subject to an incompressible
Navier-Stokes equation with pressure $P$ and viscosity $\eta$ and a gravitational force
$\nabla\phi$.
This type of the system arises in mathematical biology to model the evolution of oxygen-driven
swimming bacteria in an impressible fluid.

To motivate our study, let us first recall the following fluid-free subcase of system \dref{1.1}:
\begin{equation}
 \left\{\begin{array}{ll}
   n_t=\nabla\cdot(D(n)\nabla n) -\nabla\cdot( nS(x,n,c)\cdot\nabla c),\quad
x\in \Omega, t>0,\\
    c_t=\Delta c-nc
    ,\quad
x\in \Omega, t>0,\\
 \disp{\nabla n\cdot\nu=\nabla c\cdot\nu=0,}\quad
x\in \partial\Omega, t>0,\\
\disp{n(x,0)=n_0(x),c(x,0)=c_0(x),}\quad
x\in \Omega.\\
 \end{array}\right.\label{hjmkl1.1}
\end{equation}
There are only few rigorous results on global existence and qualitative behavior
of solutions  to \dref{hjmkl1.1}  with  either a matrix-valued function ($S(x,n,c)$)
or a scalar one $(S(x,n,c):=S(c))$ (see e.g. \cite{Tao2,Zhengddffsdsd6,Winklersedf3411215}).

 The following chemotaxis-(Navier)-Stokes
model
 which is a generalized version of the model proposed in \cite{Tuval1215}, describes the motion
of oxygen-driven swimming cells in an incompressible fluid:
%
%
\begin{equation}
 \left\{\begin{array}{ll}
   n_t+u\cdot\nabla n=\nabla\cdot(D(n)\nabla n) -C_S\nabla\cdot( nS(c)\cdot\nabla c),\quad
x\in \Omega, t>0,\\
    c_t+u\cdot\nabla c=\Delta c-nc,\quad
x\in \Omega, t>0,\\
u_t+(u\cdot\nabla u)=\nabla P+\Delta u+n\nabla \phi ,\quad
x\in \Omega, t>0,\\
\nabla\cdot u=0,\quad
x\in \Omega, t>0,\\
 \end{array}\right.\label{ghhbnmkl1.1}
\end{equation}
where compared with \dref{1.1}, the nonlinear convective term $u\nabla u$  exists
$u$-equation of \dref{ghhbnmkl1.1}, moreover $S(x,n,c):=S(c)$ is scalar function in \dref{ghhbnmkl1.1} and hence there is a certain natural quasi-Lyapunov functionals of \dref{ghhbnmkl1.1}. Hence,
by making use of energy-type functionals, some
local and global solvability of corresponding initial value problem for \dref{ghhbnmkl1.1} in either bounded or unbounded domains have
been obtained in the past years (see e.g.
Lorz et al. \cite{Duan12186,Liucvb12176},
Winkler et al. \cite{Bellomo1216,Tao795,Winklercvb12176}, Chae et al. \cite{Chaex12176,Chaexdd12176}, Di Francesco et al. (\cite{Francesco791},
Zhang, Zheng \cite{Zhangcvb12176} and references therein).

If the chemotactic sensitivity $S(x, n, c)$ is regarded as a tensor  rather than a scalar one (\cite{Xue1215}), \dref{1.1}
turns into a chemotaxis--Stokes system with rotational
flux which implies that chemotactic migration
need not be directed along the gradient of signal concentration. In contrast to the chemotaxis-fluid system
\dref{ghhbnmkl1.1}, chemotaxis-fluid systems with tensor-valued sensitivity lose some natural gradient-like structure (see Cao \cite{Cao22119}, Wang et. al \cite{Wang11215,Wang21215},
 Winkler \cite{Winkler11215}). This gives rise to
considerable mathematical difficulties.
Therefore,
only very few results appear to be available on chemotaxis--Stokes system with such tensor-valued
sensitivities (Cao et al. \cite{Cao22119}, Ishida \cite{Ishida1215}, Wang et al. \cite{Wang11215,Wangdfg11215,Wang21215}, Winkler \cite{Winkler11215}). In fact, 
assuming 
\dref{x1.73142vghf48rtgyhu}--\dref{x1.73142vghf48gg} holds,
 Ishida (\cite{Ishida1215}) showed  that
 the corresponding full chemotaxis Navier-Stokes system
   with porous-medium-type diffusion model  possesses
a bounded global weak solution in two space dimensions.
While, in three space dimensions,  if the initial data satisfy certain {\bf smallness}
conditions and $D(n)=1$,
 Cao and Lankeit \cite{Caoddff22119}  showed that \dref{1.1} has global classical solutions and give decay properties of these solutions.
%
In this paper, the core step is to establish the estimates of the
functional
%
\begin{equation}\int_{\Omega}n^{p}(\cdot,t) +\int_{\Omega} |\nabla {c}(\cdot,t)|^{2{q}}
 \label{ghnjjffff1.1hhjjddssggtyy}
\end{equation}
 for suitably chosen but arbitrarily large numbers $p > 1$ and $q > 1$.
 In fact, one of  our main tool is consideration of the  natural gradient-like energy functional
%
%
\begin{equation}
\int_{\Omega}n_{\varepsilon}(\cdot,t)\ln n_{\varepsilon}(\cdot,t)+\int_{\Omega}|\nabla\sqrt{c_{\varepsilon}}(\cdot,t)|^2+\int_{\Omega}|u_{\varepsilon}(\cdot,t)|^2,\label{ghbbnnnnmmnjjffff1.1hhjjddssggtyy}
\end{equation}
 which is
  new estimate of chemotaxis--Stokes system   {\bf with rotation} (see Lemmata \ref{lemma630jklhhjj}--\ref{lemmakkllgg4563025xxhjklojjkkk}),
   although, \dref{ghbbnnnnmmnjjffff1.1hhjjddssggtyy} has been used to solve the chemotaxis-(Navier)-Stokes system  {\bf without rotation} (see \cite{Bellomo1216,Lankeitffg11,Winklercvb12176}).
 Here $(n_\varepsilon,c_{\varepsilon},u_{\varepsilon})$ is solution of the approximate problem of \dref{1.1}. We guess that \dref{ghbbnnnnmmnjjffff1.1hhjjddssggtyy} can also be dealt with other types of systems,  e.g., quasilinear chemotaxis
system with rotation, chemotaxis-(Navier)-Stokes
system with rotation.
 Then, in view of the estimates \dref{ghbbnnnnmmnjjffff1.1hhjjddssggtyy}, the suitable
interpolation arguments (see Lemma  \ref{drfe116lemma70hhjj}) and the basic a priori information  (see Lemma \ref{ghjssdeedrfe116lemma70hhjj}), we can get the the estimates of the
functional
\begin{equation}
\int_{\Omega}n^{p}_{\varepsilon} +\int_{\Omega} |\nabla {c}_{\varepsilon}|^{2{q_0}}+\int_{\Omega}|A^{\frac{1}{2}}u_{\varepsilon}|^2 ,\label{ghbbnnnnmmnjjffffxcvvvvvvffgg1.1hhjjddssggtyy}
\end{equation}
 where $p:=p(q_0,m)\geq\frac{7}{4}$ and $q_0<2$.
Next, \dref{ghbbnnnnmmnjjffffxcvvvvvvffgg1.1hhjjddssggtyy} and some other carefully analysis  (Lemma \ref{lemma4ggg563025xxhjklojjkkkgyhuirrtt}--\ref{lemma4563025xxhjklojjkkkgyhuissddff})
yield  the estimates of the
functional
 \dref{ghnjjffff1.1hhjjddssggtyy}.
 Indeed,
the paper makes sure that under the assumption $m$  satisfies \dref{ghnjmk9161gyyhuug}, $S$ satisfies \dref{x1.73142vghf48rtgyhu} and \dref{x1.73142vghf48gg}
with some
\begin{equation}\label{x1.73142vghf48}m>\frac{9}{8},
\end{equation}
then  problem \dref{1.1}
possesses a  global weak  solution, which extends the results of Tao and Winkler
(\cite{Tao71215}), who showed the global existence of solutions in
 the cases $S(x,n,c):=C_S$,
 $m$  satisfies \dref{ghnjmk9161gyyhuug} with $m>\frac{8}{7}$.
\section{Preliminaries and  main results}
Due to the hypothesis \dref{ghnjmk9161gyyhuug}, the problem \dref{1.1} has no
classical solutions in general, and thus we consider its weak solutions in the following sense.
\begin{definition} \label{df1} (weak solutions) Let $T\in (0,\infty)$ and  $$H(s)=\int_0^sD(\sigma)d\sigma~~\mbox{for}~~s\geq0.$$
 Suppose that $(n_0, c_0, u_0)$ satisfies \dref{ccvvx1.731426677gg}. 
Then a triple of functions $(n, c, u)$ defined in $\Omega\times(0,T)$ is called a weak solution of model  \dref{1.1},
if
\begin{equation}
 \left\{\begin{array}{ll}
   n\in L_{loc}^1(\bar{\Omega}\times[0,T)),\\
    c \in L_{loc}^\infty(\bar{\Omega}\times[0,T))\cap L_{loc}^1([0,T); W^{1,1}(\Omega)),\\
u \in  L_{loc}^1([0,T); W^{1,1}(\Omega)),\\
 \end{array}\right.\label{dffff1.1fghyuisdakkklll}
\end{equation}
where $n\geq 0$ and $c\geq 0$ in
$\Omega\times(0, T)$ as well as $\nabla\cdot u = 0$ in the distributional sense in
 $\Omega\times(0, T)$,
in addition,
\begin{equation}\label{726291hh}
H(n),~~ n|\nabla c|~~~ \mbox{and}~~~ n|u|~~ \mbox{belong to}~~ L^1_{loc}(\bar{\Omega}\times [0, T)),
\end{equation}
and
\begin{equation}
\begin{array}{rl}\label{eqx45xx12112ccgghh}
\disp{-\int_0^{T}\int_{\Omega}n\varphi_t-\int_{\Omega}n_0\varphi(\cdot,0)  }=&\disp{
\int_0^T\int_{\Omega}H(n)\Delta\varphi+\int_0^T\int_{\Omega}n(S(x,n,c)\cdot
\nabla c)\cdot\nabla\varphi}\\
&+\disp{\int_0^T\int_{\Omega}nu\cdot\nabla\varphi}\\
\end{array}
\end{equation}
for any $\varphi\in C_0^{\infty} (\bar{\Omega}\times[0, T))$ satisfying
 $\frac{\partial\varphi}{\partial\nu}= 0$ on $\partial\Omega\times (0, T)$
  as well as
  \begin{equation}
\begin{array}{rl}\label{eqx45xx12112ccgghhjj}
\disp{-\int_0^{T}\int_{\Omega}c\varphi_t-\int_{\Omega}c_0\varphi(\cdot,0)  }=&\disp{-
\int_0^T\int_{\Omega}\nabla c\cdot\nabla\varphi-\int_0^T\int_{\Omega}nc\cdot\varphi+
\int_0^T\int_{\Omega}cu\cdot\nabla\varphi}\\
\end{array}
\end{equation}
for any $\varphi\in C_0^{\infty} (\bar{\Omega}\times[0, T))$  and
\begin{equation}
\begin{array}{rl}\label{eqx45xx12112ccgghhjjgghh}
\disp{-\int_0^{T}\int_{\Omega}u\varphi_t-\int_{\Omega}u_0\varphi(\cdot,0)  }=&\disp{-
\int_0^T\int_{\Omega}\nabla u\cdot\nabla\varphi-
\int_0^T\int_{\Omega}n\nabla\phi\cdot\varphi}\\
\end{array}
\end{equation}
for any $\varphi\in C_0^{\infty} (\bar{\Omega}\times[0, T);\mathbb{R}^3)$ fulfilling
$\nabla\varphi\equiv 0$ in
 $\Omega\times(0, T)$.
If $(n, c, u)$ is a weak solution of \dref{1.1} in $\Omega\times(0,T )$ for any $T\in (0,\infty),$
then we call $(n, c, u)$ a global weak solution.
\end{definition}

In this paper,
we assume that 
\begin{equation}
\phi\in W^{1,\infty}(\Omega).
\label{dd1.1fghyuisdakkkllljjjkk}
\end{equation}
Moreover, let the initial data
$(n_0, c_0, u_0)$ fulfill
\begin{equation}\label{ccvvx1.731426677gg}
\left\{
\begin{array}{ll}
\displaystyle{n_0\in C^\kappa(\bar{\Omega})~~\mbox{for certain}~~ \kappa > 0~~ \mbox{with}~~ n_0\geq0 ~~\mbox{in}~~\Omega},\\
\displaystyle{c_0\in W^{1,\infty}(\Omega)~~\mbox{with}~~c_0\geq0~~\mbox{in}~~\bar{\Omega},}\\
\displaystyle{u_0\in D(A^\gamma_{r})~~\mbox{for~~ some}~~\gamma\in ( \frac{3}{4}, 1)~~\mbox{and any}~~ {r}\in (1,\infty),}\\
\end{array}
\right.
\end{equation}
where $A_{r}$ denotes the Stokes operator with domain $D(A_{r}) := W^{2,{r}}(\Omega)\cap  W^{1,{r}}_0(\Omega)
\cap L^{r}_{\sigma}(\Omega)$,
and
$L^{r}_{\sigma}(\Omega) := \{\varphi\in  L^{r}(\Omega)|\nabla\cdot\varphi = 0\}$ for ${r}\in(1,\infty)$
 (\cite{Sohr}).

\begin{theorem}\label{theorem3}
Let
\dref{dd1.1fghyuisdakkkllljjjkk}
 hold, and suppose that $m$ and $S$ satisfies \dref{ghnjmk9161gyyhuug} and \dref{x1.73142vghf48rtgyhu}--\dref{x1.73142vghf48gg}, respectively.
 Suppose that the assumptions \dref{ccvvx1.731426677gg} hold.
 If
\begin{equation}\label{x1.73142vghf48}m>\frac{9}{8},
\end{equation}
then there exists at least one global weak
solution (in the sense of Definition \ref{df1} above) of problem \dref{1.1}. Moreover,
for any fixed $T > 0$, there exists a positive constant $C:=C(T)$ such that
 \begin{equation}
\|u(\cdot,t)\|_{L^\infty(\Omega)} +\|c(\cdot,t)\|_{W^{1,\infty}(\Omega)}+\|n(\cdot,t)\|_{L^\infty(\Omega)} \leq C ~~\mbox{for all}~~ t\in(0,T).
\label{zj233455scz2.529711234566x9630111kk}
\end{equation}

\end{theorem}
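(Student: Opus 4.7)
The plan is to construct the solution via a standard approximation scheme and to derive the key a priori bounds in three nested layers: the natural gradient-like energy, a coupled $L^p$--$W^{1,2q}$ estimate, and an $L^\infty$ bootstrap. First I would regularize the degenerate diffusion, e.g.\ by replacing $D(n)$ by $D(n)+\varepsilon$ and smoothing the tensor $S$, so that standard parabolic/Stokes theory yields smooth solutions $(n_\varepsilon,c_\varepsilon,u_\varepsilon)$ on a maximal existence interval. Integrating the $n$-equation gives mass conservation $\|n_\varepsilon(\cdot,t)\|_{L^1}=\|n_0\|_{L^1}$, while a maximum-principle argument for the $c$-equation produces $\|c_\varepsilon(\cdot,t)\|_{L^\infty}\le \|c_0\|_{L^\infty}$. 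Combined with the monotonicity of $S_0$, this yields $|S(x,n_\varepsilon,c_\varepsilon)|\le S_0(\|c_0\|_{L^\infty})$ uniformly in $\varepsilon$, which will be used repeatedly.

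The crucial and, conceptually, the hardest step is then to derive a gradient-like inequality for
\begin{equation*}
\mathcal{F}(t):=\int_\Omega n_\varepsilon\ln n_\varepsilon+\int_\Omega|\nabla\sqrt{c_\varepsilon}|^2+\int_\Omega|u_\varepsilon|^2.
\end{equation*}
Differentiating and using the three PDEs produces dissipation terms $\int|\nabla n_\varepsilon|^2/n_\varepsilon$, a weighted Hessian expression for $\sqrt{c_\varepsilon}$, and $\int|\nabla u_\varepsilon|^2$, together with bad cross terms coming from the rotational chemotactic flux $-\int n_\varepsilon S(x,n_\varepsilon,c_\varepsilon)\nabla c_\varepsilon\cdot\nabla\ln n_\varepsilon$ and from the buoyancy coupling $\int n_\varepsilon u_\varepsilon\cdot\nabla\phi$. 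Because $S$ is matrix-valued the first cross term has no sign and must be absorbed: using the pointwise bound $|S|\le S_0(c)$, the $L^\infty$ bound on $c_\varepsilon$, the strong dissipation produced by the time derivative of $\int|\nabla\sqrt{c_\varepsilon}|^2$, and the convexity of $\Omega$ (which forces the relevant boundary integral to be non-positive), this term is controlled by the diffusive dissipation. The remaining gravitational contribution is handled by Young's and Poincar\'e's inequalities, yielding a differential inequality of the form $\mathcal{F}'(t)+\mathcal{D}[n_\varepsilon,c_\varepsilon,u_\varepsilon]\le C$ and hence uniform-in-$\varepsilon$ bounds on $\mathcal{F}$ and on the associated dissipation integrals.

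Building on this, I would upgrade to the coupled estimate \dref{ghbbnnnnmmnjjffffxcvvvvvvffgg1.1hhjjddssggtyy} by testing the $n_\varepsilon$-equation against $n_\varepsilon^{p-1}$, the $c_\varepsilon$-equation against $-\nabla\cdot(|\nabla c_\varepsilon|^{2q-2}\nabla c_\varepsilon)$, and the Stokes equation against $A u_\varepsilon$. The diffusion yields a good term of order $\int n_\varepsilon^{p+m-3}|\nabla n_\varepsilon|^2$, while the rotational flux creates a bad contribution essentially of the form $\int n_\varepsilon^{p+1}|\nabla c_\varepsilon|^2$, and the $|\nabla c_\varepsilon|^{2q}$ test produces analogous mixed terms. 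Balancing them via Gagliardo--Nirenberg interpolation together with the $L^1$ bound on $n_\varepsilon$, the uniform $n\ln n$ bound, and the dissipation bounds from the previous step, one shows all bad terms can be absorbed as soon as $m>\frac{9}{8}$, for some admissible pair $(p,q_0)$ with $p\ge\frac{7}{4}$ and $q_0<2$. The threshold $\frac{9}{8}$ emerges precisely from balancing the diffusion strength against the three-dimensional Sobolev embedding in this coupled scheme.

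Once the coupled $L^p$--$W^{1,2q_0}$--$D(A^{1/2})$ estimate is available, a Moser-type iteration for $n_\varepsilon$ together with heat-semigroup and Stokes-semigroup estimates for $c_\varepsilon$ and $u_\varepsilon$ upgrade these to the $L^\infty$ control stated in \dref{zj233455scz2.529711234566x9630111kk} on any fixed time interval $(0,T)$. Finally, strong compactness of $n_\varepsilon$ via Aubin--Lions (combining the $L^\infty$ bounds with uniform bounds on $\partial_t n_\varepsilon$ in a suitable negative Sobolev space), plus standard weak and weak-$\ast$ extraction for $c_\varepsilon$ and $u_\varepsilon$, allows passage to the limit $\varepsilon\to 0$ in the weak formulation of Definition \ref{df1}, yielding the desired global weak solution.
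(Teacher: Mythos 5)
Your proposal is correct and follows essentially the same route as the paper: regularization, mass conservation and the $L^\infty$ bound on $c_\varepsilon$, the quasi-energy $\int n_\varepsilon\ln n_\varepsilon+\int|\nabla\sqrt{c_\varepsilon}|^2+\int|u_\varepsilon|^2$, the coupled $\int n_\varepsilon^p+\int|\nabla c_\varepsilon|^{2q_0}+\int|A^{1/2}u_\varepsilon|^2$ estimate, Moser iteration with semigroup bounds, and Aubin--Lions. The only detail worth making explicit is that the interpolation step delivering the threshold $m>\tfrac{9}{8}$ is not run directly against the $L^1$ and $n\ln n$ bounds but against an intermediate time-pointwise bound $\|n_\varepsilon\|_{L^{p_0}}\le C$ for every $p_0<9(m-1)$ (obtained, as in Tao--Winkler, from the spatio-temporal dissipation $\int_0^T\int n_\varepsilon^{m-2}|\nabla n_\varepsilon|^2$ that your energy inequality provides), and the condition $2(m-1)+\tfrac{2p_0}{3}>1$ with $p_0$ close to $9(m-1)$ is precisely what yields $m>\tfrac{9}{8}$.
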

\begin{remark}
(i) If $ S(x,n,c):=C_S,$   Theorem \ref{theorem3}   extends the results of Theorem 1.1 of Tao and Winkler \cite{Tao71215}, who proved the possibility of $\mathbf{global~~ existence}$,
in the case that $m> \frac{8}{7}$.



(ii) In view of Theorem \ref{theorem3}, if the flow of fluid is ignored or the fluid is
stationary in \dref{1.1}, $ S(x,n,c):=C_S,$ and $N=3$,  Theorem \ref{theorem3}
is consistent with the result of Theorem 2.1 of  Zheng and Wang(\cite{Zhengddffsdsd6}),
who proved the possibility of $\mathbf{global~~ existence}$,
in the case that $m> \frac{9}{8}$.

(iii) From Theorem \ref{theorem3}, if the flow of fluid is ignored or the fluid is
stationary in \dref{1.1} and $ S(x,n,c):=C_S,$ and $N=3$, our results improve of  Wang et al. \cite{Wangdd79k1},
who proved the possibility of  $\mathbf{global~~ existence}$,
in the case that $m> \frac{4}{3}$.

(iv) By Theorem \ref{theorem3}, we also derive that the large diffusion exponent $m$ ($>\frac{10}{9}$) yields  the existence  of
solutions to \dref{1.1}. Moreover, no smallness condition on either $\phi$ or on the initial data needs to be fulfilled here, which is different from \cite{Caoddff22119}.

\end{remark}


\begin{lemma}(\cite{Winkler11215})\label{lemmafggg78630jklhhjj}
 Let $l\in[1,+\infty)$ and $r\in[1,+\infty]$ be such that
 \begin{equation}
\left\{\begin{array}{ll}
l<\frac{3r}{3-r}~~\mbox{if}~~
r\leq3,\\
l\leq\infty~~\mbox{if}~~
r>3.
 \end{array}\right.\label{3.10gghhjuulooll}
\end{equation}
Then for all $K > 0$ there exists $C: = C(l, r,K)$ such that if 
 \begin{equation}\|n{}(\cdot, t)\|_{L^r(\Omega)}\leq K~~ \mbox{for all}~~ t\in(0, T_{max}),
\label{3.10gghhjuuloollgghhhy}
\end{equation}
then
 \begin{equation}\|D u{}(\cdot, t)\|_{L^l(\Omega)}\leq C~~ \mbox{for all}~~ t\in(0, T_{max}).
\label{3.10gghhjuuloollgghhhyhh}
\end{equation}
\end{lemma}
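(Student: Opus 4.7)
The plan is to treat $u$ as the solution of an inhomogeneous Stokes system and bound its gradient by combining the Duhamel representation via the Stokes semigroup $(e^{-tA})_{t\geq 0}$ with the standard $L^p$--$L^q$ smoothing estimates for fractional powers of the Stokes operator. The essential input is the hypothesis \dref{dd1.1fghyuisdakkkllljjjkk} on $\phi$, which turns the pointwise bound on $n$ into a bound on the Stokes forcing in the same Lebesgue exponent.

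First I would apply $\mathbb{P}$ (the Helmholtz projection) to the $u$-equation, rewrite the resulting abstract evolution equation in mild form,
\begin{equation*}
u(\cdot,t) \;=\; e^{-tA} u_0 \;+\; \int_0^t e^{-(t-s)A}\mathbb{P}\bigl[n(\cdot,s)\nabla\phi\bigr]\, ds,
\end{equation*}
and then pick an exponent $\beta \in (\tfrac12,1)$ such that the domain of $A^{\beta}$ continuously embeds into $W^{1,l}(\Omega)$ (for example, any $\beta > \tfrac12 + \tfrac{3}{2r}$ works uniformly whenever $r>3$ and $l=\infty$, while $\beta=\tfrac12$ is enough in the subcritical range $l<\tfrac{3r}{3-r}$ with $r\le 3$, via $D(A^{1/2})\hookrightarrow W^{1,l}$). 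Applying $A^{\beta}$ under the integral and using the classical smoothing estimate
\begin{equation*}
\bigl\|A^{\beta} e^{-\sigma A}\mathbb{P} v\bigr\|_{L^{l}(\Omega)} \;\le\; C\,\sigma^{-\beta-\frac{3}{2}\left(\frac{1}{r}-\frac{1}{l}\right)}\,\|v\|_{L^{r}(\Omega)},\qquad \sigma\in(0,1],
\end{equation*}
together with $\|n(\cdot,s)\nabla\phi\|_{L^{r}(\Omega)} \le \|\nabla\phi\|_{L^{\infty}(\Omega)}\,K$, would yield the bound
\begin{equation*}
\|Du(\cdot,t)\|_{L^{l}(\Omega)} \;\le\; C\bigl\|A^{\beta}u(\cdot,t)\bigr\|_{L^{l}(\Omega)} \;\le\; C\|A^{\gamma}u_0\|_{L^{r}} + C K\|\nabla\phi\|_{L^{\infty}}\int_0^{t}(t-s)^{-\beta-\frac{3}{2}\left(\frac{1}{r}-\frac{1}{l}\right)}\,ds.
\end{equation*}
The semigroup estimate on the initial datum is harmless because $u_0\in D(A^{\gamma}_{r})$ for some $\gamma\in(\tfrac34,1)$ by \dref{ccvvx1.731426677gg}.

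The core obstacle is verifying integrability of the time kernel up to $s=t$: this requires $\beta + \tfrac{3}{2}\bigl(\tfrac{1}{r}-\tfrac{1}{l}\bigr) < 1$, and the admissible choice of $\beta$ just above $\tfrac12$ translates this into $\tfrac{1}{r}-\tfrac{1}{l}<\tfrac13$, which is exactly the constraint \dref{3.10gghhjuulooll} distinguishing the subcritical Sobolev range $l<\tfrac{3r}{3-r}$ (for $r\le 3$) from the case $r>3$ where any $l\le\infty$ is permitted. One has to be careful with the borderline $r=3$, where the threshold $l=\infty$ is excluded; here a slight loss of integrability in the exponent $\beta$ or a small $\varepsilon$-absorption in $l$ is needed, but this is standard.

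Finally, to obtain a constant independent of $t\in(0,T_{\max})$ one would split the time integral on $[0,t]$ into $[0,t-1]$ and $[t-1,t]$ (or use rescaling), apply an exponential decay estimate of $e^{-\sigma A}$ on $L^{r}_{\sigma}(\Omega)$ on the former, and the local smoothing bound above on the latter. Together with the identification $\|\,Du\,\|_{L^{l}}\sim \|A^{1/2} u\|_{L^{l}}$ coming from the Stokes calculus on bounded smooth domains, this would produce a constant $C=C(l,r,K)$ as claimed, completing the proof.
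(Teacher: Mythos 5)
Your proposal is correct and coincides with how this lemma is actually established: the paper states it without proof, citing \cite{Winkler11215}, and the proof there is precisely the variation-of-constants representation for the (linear) Stokes subsystem combined with the $L^p$--$L^q$ smoothing estimates for $A^{\beta}e^{-\sigma A}$ and the embedding $D(A^{\beta}_l)\hookrightarrow W^{1,l}(\Omega)$, with the integrability condition $\beta+\tfrac{3}{2}(\tfrac1r-\tfrac1l)<1$ reproducing exactly the constraint \dref{3.10gghhjuulooll}. No substantive gap; only cosmetic slips (e.g.\ the homogeneous part should be estimated as $\|A^{\beta}e^{-tA}u_0\|_{L^l}$ using $u_0\in D(A^{\gamma}_r)$, and the exponential decay of the semigroup already makes the time integral uniformly bounded without splitting).
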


In general, the degenerate diffusion case of \dref{1.1}
might not have classical solutions, thus in order to justify all the formal arguments, we need to introduce the following approximating system of \dref{1.1}:
\begin{equation}
 \left\{\begin{array}{ll}
   n_{\varepsilon t}+u_{\varepsilon}\cdot\nabla n_{\varepsilon}=\nabla\cdot(D_{\varepsilon}(n_{\varepsilon})\nabla n_{\varepsilon})-\nabla\cdot(n_{\varepsilon}S_\varepsilon(x, n_{\varepsilon}, c_{\varepsilon})\nabla c_{\varepsilon}),\quad
x\in \Omega, t>0,\\
    c_{\varepsilon t}+u_{\varepsilon}\cdot\nabla c_{\varepsilon}=\Delta c_{\varepsilon}-n_{\varepsilon}c_{\varepsilon},\quad
x\in \Omega, t>0,\\
u_{\varepsilon t}+\nabla P_{\varepsilon}=\Delta u_{\varepsilon}+n_{\varepsilon}\nabla \phi ,\quad
x\in \Omega, t>0,\\
\nabla\cdot u_{\varepsilon}=0,\quad
x\in \Omega, t>0,\\
 \disp{\nabla n_{\varepsilon}\cdot\nu=\nabla c_{\varepsilon}\cdot\nu=0,u_{\varepsilon}=0,\quad
x\in \partial\Omega, t>0,}\\
\disp{n_{\varepsilon}(x,0)=n_0(x),c_{\varepsilon}(x,0)=c_0(x),u_{\varepsilon}(x,0)=u_0(x)},\quad
x\in \Omega,\\
 \end{array}\right.\label{1.1fghyuisda}
\end{equation}
where
%
%
a family
$(D_\varepsilon)_{\varepsilon\in(0,1)}$ of
functions
$$D_\varepsilon\in C^2((0,\infty))~ \mbox{such that}~~ D_\varepsilon(n)\geq\varepsilon
 ~\mbox{for all}~ n > 0 $$$$~~\mbox{and}~
D(n) \leq D_\varepsilon(n)\leq D(n)  + 2\varepsilon~ \mbox{for all}~
 n > 0 ~\mbox{and}~\varepsilon\in (0, 1),$$
  \begin{equation}
 \begin{array}{ll}
S_\varepsilon(x, n, c) := \rho_\varepsilon(x)S(x, n, c),~~ x\in\bar{\Omega},~~n\geq0,~~c\geq0 ~~\mbox{and}~~\varepsilon\in(0, 1).
 \end{array}\label{3.10gghhjuuloollyuigghhhyy}
\end{equation}
Here $(\rho_\varepsilon)_{\varepsilon\in(0,1)} \in C^\infty_0 (\Omega)$
  be a family of standard cut-off functions satisfying $0\leq\rho_\varepsilon\leq 1$
   in $\Omega$
 and $\rho_\varepsilon\rightarrow1$ in $\Omega$
 as $\varepsilon\rightarrow0$.

Let us begin with the following statement on local well-posedness of \dref{1.1fghyuisda}, along with a convenient
extensibility criterion. For a proof we refer to (see \cite{Winkler11215}, Lemma 2.1 of \cite{Winklercvb12176}):
\begin{lemma}\label{lemma70}
Let $\Omega \subseteq \mathbb{R}^3 $ be a bounded  convex  domain with smooth boundary.
Assume
that
%
the initial data $(n_0,c_0,u_0)$ fulfills \dref{ccvvx1.731426677gg}.
Then there exist $T_{max}\in  (0,\infty]$ and
a classical solution $(n_{\varepsilon}, c_{\varepsilon}, u_{\varepsilon}, P_{\varepsilon})$ of \dref{1.1fghyuisda} in
$\Omega\times(0, T_{max})$ such that
\begin{equation}
 \left\{\begin{array}{ll}
 n_{\varepsilon}\in C^0(\bar{\Omega}\times[0,T_{max}))\cap C^{2,1}(\bar{\Omega}\times(0,T_{max})),\\
  c_{\varepsilon}{}\in  C^0(\bar{\Omega}\times[0,T_{max}))\cap C^{2,1}(\bar{\Omega}\times(0,T_{max})),\\
  u_{\varepsilon}{}\in  C^0(\bar{\Omega}\times[0,T_{max}))\cap C^{2,1}(\bar{\Omega}\times(0,T_{max})),\\
  P_{\varepsilon}{}\in  C^{1,0}(\bar{\Omega}\times(0,T_{max})),\\
   \end{array}\right.\label{1.1ddfghyuisda}
\end{equation}
 classically solving \dref{1.1fghyuisda} in $\Omega\times[0,T_{max})$.
%
Moreover,  $n_\varepsilon$ and $c_\varepsilon$ are nonnegative in
$\Omega\times(0, T_{max})$, and
\begin{equation}
\limsup_{t\nearrow T_{max}}(\|n_\varepsilon(\cdot, t)\|_{L^\infty(\Omega)}+\|c_\varepsilon(\cdot, t)\|_{W^{1,\infty}(\Omega)}+\|A^\gamma u_\varepsilon(\cdot, t)\|_{L^{2}(\Omega)})=\infty,
\label{1.163072x}
\end{equation}
where $\gamma$ is given by \dref{ccvvx1.731426677gg}.
\end{lemma}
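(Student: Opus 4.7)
The plan is to construct a local-in-time classical solution to the approximate problem \dref{1.1fghyuisda} by a standard Banach fixed-point argument, exploiting the fact that the regularization $D_\varepsilon(n)\geq\varepsilon>0$ renders the $n$-equation \emph{uniformly} parabolic (thereby removing the degeneracy that forced the weak-solution framework for \dref{1.1} in the first place). For suitable $T>0$ and $R>0$ I would introduce the closed subset
\[
\mathcal{X}_T:=\bigl\{(\hat n,\hat c,\hat u)\in C^0(\bar\Omega\times[0,T])^{3}: \|(\hat n,\hat c,\hat u)-(n_0,c_0,u_0)\|_\infty\leq R,\ \nabla\cdot\hat u=0\bigr\}
\]
and define $\Phi:\mathcal{X}_T\to C^0(\bar\Omega\times[0,T])^3$ by $\Phi(\hat n,\hat c,\hat u):=(n,c,u)$, where $(n,c,u)$ solves the \emph{decoupled} system obtained by freezing all non-principal coefficients at $(\hat n,\hat c,\hat u)$: the $c$-equation becomes a linear parabolic problem with known drift $\hat u$ and damping $\hat n$; the Stokes system $u_t+\nabla P=\Delta u+\hat n\nabla\phi$ is solved via the Stokes semigroup $e^{-tA}$; and the $n$-equation, with the cross-diffusion term treated as a prescribed inhomogeneity involving $\nabla\hat c$, falls into the scope of classical quasilinear parabolic theory (Amann, or Ladyzhenskaya--Solonnikov--Uraltseva).

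The key quantitative ingredients are: parabolic Schauder bounds for the $n$-equation using the regularity hypothesis \dref{ghnjmk9161gyyhuug} on $D$ together with $D_\varepsilon\in C^2$ and $S_\varepsilon=\rho_\varepsilon S\in C^2$; maximal Sobolev regularity for the linear $c$-equation to control $\|c\|_{W^{1,\infty}}$; and the smoothing estimate $\|A^\gamma e^{-tA}v\|_{L^r}\leq Ct^{-\gamma}\|v\|_{L^r}$ for the Stokes semigroup (Sohr), which is exactly what permits the inclusion $u\in C^0(\bar\Omega\times[0,T])$ through the embedding $D(A^\gamma)\hookrightarrow L^\infty(\Omega)$ valid for $\gamma>3/4$ in three dimensions (this is precisely why the assumption $\gamma\in(3/4,1)$ appears in \dref{ccvvx1.731426677gg}). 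Differencing two iterates of $\Phi$ and using Lipschitz continuity of $D_\varepsilon$ and $S_\varepsilon$ on bounded sets then yields contractivity on a sufficiently short interval, and Banach's theorem produces a unique local solution. Higher regularity as recorded in \dref{1.1ddfghyuisda} follows by a standard parabolic bootstrap once the coefficients of the three equations are known to be Hölder continuous.

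Nonnegativity of $c_\varepsilon$ and $n_\varepsilon$ is a direct consequence of the parabolic maximum principle: for $c_\varepsilon$, the equation has no zero-order source and nonnegative initial data; for $n_\varepsilon$, one writes the flux in divergence form, uses the no-flux boundary condition, and tests against the negative part. The extensibility criterion \dref{1.163072x} is then proved by the usual contradiction argument: let $T_{max}$ denote the supremum of times on which the local solution can be continued, assume $T_{max}<\infty$ while the left-hand side of \dref{1.163072x} stays bounded, pick $t_0$ close to $T_{max}$, and restart the fixed-point construction from $(n_\varepsilon(\cdot,t_0),c_\varepsilon(\cdot,t_0),u_\varepsilon(\cdot,t_0))$. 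The anticipated main obstacle is verifying that the three chosen norms $\|n_\varepsilon\|_{L^\infty}$, $\|c_\varepsilon\|_{W^{1,\infty}}$, $\|A^\gamma u_\varepsilon\|_{L^2}$ are jointly sufficient to control every coefficient arising in the fixed-point map on a time interval whose length depends only on these norms — one must trace carefully how each bound feeds the inhomogeneities of the other two equations, in particular how $\|c_\varepsilon\|_{W^{1,\infty}}$ bounds the drift in the $n$-equation and how $\|A^\gamma u_\varepsilon\|_{L^2}$ controls $\|u_\varepsilon\|_{L^\infty}$ — but this is exactly the argument carried out in \cite{Winkler11215} and Lemma 2.1 of \cite{Winklercvb12176}, to which I would defer for the technical verification.
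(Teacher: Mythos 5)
Your proposal is correct and follows essentially the same route as the paper, which in fact offers no proof of its own but simply defers to \cite{Winkler11215} and Lemma 2.1 of \cite{Winklercvb12176}; the fixed-point construction, maximum-principle argument for nonnegativity, and contradiction argument for the extensibility criterion you sketch are precisely the standard arguments carried out in those references. Your additional remarks on why $\gamma>3/4$ is needed and how the three norms in \dref{1.163072x} control the coefficients are accurate and, if anything, more informative than what the paper provides.
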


\begin{lemma}(\cite{Lankeitffg11})\label{lemmffffgga630jklhhjj}
 Let $w\in C^2(\bar{\Omega})$
  satisfy $\nabla w\cdot\nu  = 0$ on $\partial\Omega$.

    (i) Then
$$\frac{\partial|\nabla w|^2}{\partial\nu} \leq C_{\partial\Omega}|\nabla w|^2,$$
where $C_{\partial\Omega}$ is an upper bound on the curvature of $\partial\Omega$.

(ii) Furthermore, for any $\delta > 0$ there is $C(\delta) > 0$ such that every $w\in C^2(\bar{\Omega})$ with $\nabla w\cdot\nu  = 0$ on $\partial\Omega$ fulfils
$$\|w\|_{L^2(\partial\Omega)}\leq \delta \|\Delta w\|_{L^2(\Omega)} + C(\delta) \|w\|_{L^2(\Omega)} .$$

(iii) For any positive $w\in C^2(\bar{\Omega})$
 \begin{equation}\|\Delta w^{\frac{1}{2}}\|_{L^2(\Omega)}\leq \frac{1}{2} \|w^{\frac{1}{2}}\Delta \ln w\|_{L^2(\Omega)} +
 \frac{1}{4}  \|w^{-\frac{3}{2}}|\nabla w|^2\|_{L^2(\Omega)}.
\label{3.10gghhjuulofffollgghhhyhh}
\end{equation}

(iv) There are $C > 0$ and $\delta > 0$ such that every positive $w\in C^2(\bar{\Omega})$ fulfilling $\nabla w\cdot\nu  = 0$ on $\partial\Omega$ satisfies
\begin{equation}
 \begin{array}{rl}
  &\disp{-2\int_{\Omega}\frac{|\Delta  w|^2 }{ w}+\int_{\Omega}\frac{|\nabla  w|^2\Delta  w }{ w^2}\leq -\delta\int_{\Omega} w|D^2\ln w|^2-\delta\int_{\Omega}\frac{|\nabla  w|^4}{ w^3}+C\int_{\Omega} w.}\\
\end{array}\label{vcbbbbcvvgbhsvvbbsddacvvvvbbqwswddaassffssff3.10deerfgghhjuuloollgghhhyhh}
\end{equation}
\end{lemma}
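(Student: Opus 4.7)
The plan is to dispatch the four parts separately, as they rest on rather different ingredients, with (iv) as the main content.

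For (i), near $x_0\in\partial\Omega$ I would choose an orthonormal frame with $\nu$ the outward unit normal and $\{e_i\}$ a tangential basis. Differentiating the Neumann identity $\nabla w\cdot\nu = 0$ along each $e_i$, invoking the Weingarten relation $\partial_i\nu = \pm\mathrm{II}_{ij}e_j$, and using $\partial_\nu w = 0$ on $\partial\Omega$ reduces $\partial_\nu|\nabla w|^2 = 2\sum_i w_i D^2 w(e_i,\nu)$ to $\pm 2\,\mathrm{II}(\nabla w,\nabla w)$, whose absolute value is at most $2C_{\partial\Omega}|\nabla w|^2$ for any upper bound $C_{\partial\Omega}$ on the principal curvatures. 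For (iii) a direct computation suffices: setting $v = w^{1/2}$ and using $\Delta\ln w = 2\Delta v/v - 2|\nabla v|^2/v^2$ yields the pointwise identity
\[
\Delta w^{1/2} \;=\; \tfrac{1}{2}w^{1/2}\Delta\ln w + \tfrac{1}{4}w^{-3/2}|\nabla w|^2,
\]
after which the triangle inequality in $L^2(\Omega)$ closes the claim.

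For (ii), I would combine the trace inequality $\|w\|_{L^2(\partial\Omega)}^2 \leq C\|w\|_{L^2(\Omega)}\|w\|_{H^1(\Omega)}$ with the interpolation $\|w\|_{H^1} \leq C\|w\|_{H^2}^{1/2}\|w\|_{L^2}^{1/2}$ and the Neumann elliptic estimate $\|w\|_{H^2} \leq C(\|\Delta w\|_{L^2} + \|w\|_{L^2})$ (permitted by $\nabla w\cdot\nu = 0$) to obtain $\|w\|_{L^2(\partial\Omega)} \leq C\|w\|_{L^2}^{3/4}\|\Delta w\|_{L^2}^{1/4} + C\|w\|_{L^2}$; Young's inequality then absorbs the first summand into the desired form $\delta\|\Delta w\|_{L^2} + C(\delta)\|w\|_{L^2}$ for arbitrary $\delta > 0$.

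Part (iv) is the main obstacle. My plan is to substitute $f := \ln w$, whence $\nabla w = w\nabla f$, $\Delta w = w(\Delta f + |\nabla f|^2)$ and $|\nabla w|^4/w^3 = w|\nabla f|^4$; a short expansion collapses the left-hand side to $-2\int_\Omega w(\Delta f)^2 - 3\int_\Omega w|\nabla f|^2\Delta f - \int_\Omega w|\nabla f|^4$. I would then invoke the Bochner identity $\tfrac{1}{2}\Delta|\nabla f|^2 = |D^2 f|^2 + \nabla f\cdot\nabla\Delta f$; multiplying by $w$, integrating, and integrating by parts twice using $\nabla w = w\nabla f$ and $\partial_\nu f = 0$ reproduces exactly this polynomial plus the boundary contribution $\tfrac{1}{2}\int_{\partial\Omega}w\,\partial_\nu|\nabla f|^2$, giving the identity
\[
-2\int_\Omega\frac{|\Delta w|^2}{w} + \int_\Omega\frac{|\nabla w|^2\Delta w}{w^2} \;=\; -2\int_\Omega w|D^2\ln w|^2 + \int_{\partial\Omega}w\,\partial_\nu|\nabla\ln w|^2.
\]
By (i), applied to $w$ and translated via $\partial_\nu w = 0$, the boundary term is dominated by $C_{\partial\Omega}\int_{\partial\Omega}|\nabla w|^2/w$, which I would absorb into $\varepsilon\int_\Omega w|D^2\ln w|^2 + C_\varepsilon\int_\Omega w$ by applying (ii) to $\sqrt{|\nabla w|^2/w + \varepsilon}$ (and it is even nonpositive outright when $\Omega$ is convex, so that the margin is comfortable). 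The last input is the auxiliary bound $\int_\Omega|\nabla w|^4/w^3 \leq 9\int_\Omega w|D^2\ln w|^2$, obtained by integrating by parts in $\int_\Omega\nabla w\cdot\nabla f\,|\nabla f|^2$ and applying Cauchy--Schwarz. Choosing $\delta$ small enough that $\delta + 9\delta < 2$, the right-hand side of (iv) dominates $-2\int_\Omega w|D^2\ln w|^2$ up to the harmless residue $C\int_\Omega w$. The truly delicate step is precisely this balancing of constants between the $|D^2\ln w|^2$ and $|\nabla w|^4/w^3$ coefficients; the Bochner identity together with (i) and (ii) is what makes it close.
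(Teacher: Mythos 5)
The paper does not prove this lemma at all --- it is quoted verbatim from \cite{Lankeitffg11} --- so the only question is whether your reconstruction is sound, and it essentially is: the pointwise identity in (iii), the trace--interpolation--elliptic chain in (ii), the Weingarten computation in (i), and above all the substitution $f=\ln w$ plus the Bochner identity in (iv), which does yield the exact identity $-2\int_\Omega|\Delta w|^2/w+\int_\Omega|\nabla w|^2\Delta w/w^2=-2\int_\Omega w|D^2\ln w|^2+\int_{\partial\Omega}w\,\partial_\nu|\nabla\ln w|^2$, together with the absorption $\int_\Omega|\nabla w|^4/w^3\le C_0\int_\Omega w|D^2\ln w|^2$ (obtained by integrating $\int_\Omega\nabla w\cdot\nabla f\,|\nabla f|^2$ by parts and Cauchy--Schwarz), is exactly the argument in the cited source. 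Two small caveats. First, your constant $9$ in the auxiliary bound is not what the computation gives --- $\int_\Omega w|\nabla f|^4\le(2+\sqrt{3})\left(\int_\Omega w|D^2f|^2\right)^{1/2}\left(\int_\Omega w|\nabla f|^4\right)^{1/2}$ leads to $C_0=(2+\sqrt3)^2\approx 13.9$ --- but since $\delta$ is only required to satisfy $\delta(1+C_0)<2$, nothing is lost. Second, your proposed absorption of the boundary term by applying (ii) to $\sqrt{|\nabla w|^2/w+\varepsilon}$ is not licensed by (ii) as stated, since that function need not satisfy the Neumann condition that (ii) presupposes; in the non-convex case one has to route this through a fractional trace embedding instead. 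For the present paper this is immaterial, as $\Omega$ is assumed convex, so $\partial_\nu|\nabla w|^2\le 0$ on $\partial\Omega$ by (i) and the boundary term can simply be dropped --- which you correctly point out.
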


\begin{lemma}\label{drfe116lemma70hhjj}(Lemma 3.8 of \cite{Winkler11215})
Let $q\geq1$,  \begin{equation}\lambda\in[2q+2,4q+1]
\label{3.10deerfgghhjuuloollgghhhyhh}
\end{equation}
and $\Omega\subset \mathbb{R}^3$ be a bounded  convex  domain with smooth boundary.
Then there exists $C > 0$ such that for all $\varphi\in C^2(\bar{\Omega})$ fulfilling $\varphi\cdot\frac{\partial\varphi}{\partial\nu}= 0$
 on $\partial\Omega$
 we have
 \begin{equation}
 \begin{array}{rl}
 &\|\nabla\varphi\|_{L^\lambda(\Omega)}\leq C\||\nabla\varphi|^{q-1}D^2\varphi\|_{L^2(\Omega)}^{\frac{2(\lambda-3)}{(2q-1)\lambda}}
 \|\varphi\|_{L^\infty(\Omega)}^{\frac{6q-\lambda}{(2q-1)\lambda}}+C\|\varphi\|_{L^\infty(\Omega)}.\\
\end{array}\label{aqwswddaassffssff3.10deerfgghhjuuloollgghhhyhh}
\end{equation}
\end{lemma}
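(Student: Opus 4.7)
The plan is to reduce the inequality to a Gagliardo--Nirenberg interpolation applied to $W:=|\nabla\varphi|^{q}$, coupled to an auxiliary integration-by-parts bound on $\|\nabla\varphi\|_{L^{2q+2}(\Omega)}$. Since the pointwise estimate $|\nabla W|\le q|\nabla\varphi|^{q-1}|D^{2}\varphi|$ holds, the factor $\||\nabla\varphi|^{q-1}D^{2}\varphi\|_{L^{2}}$ controls $\|\nabla W\|_{L^{2}}$; moreover, $\|W\|_{L^{\lambda/q}(\Omega)}^{1/q}=\|\nabla\varphi\|_{L^{\lambda}(\Omega)}$, so the target estimate translates into a Sobolev interpolation for $W$ between $H^{1}(\Omega)$ and a low Lebesgue norm.

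I would apply the three-dimensional Gagliardo--Nirenberg inequality on $\Omega$ with background exponent $s_{0}:=(2q+2)/q$, so that $\|W\|_{L^{s_{0}}(\Omega)}=\|\nabla\varphi\|_{L^{2q+2}(\Omega)}^{q}$. The scaling identity $\tfrac{q}{\lambda}=\tfrac{\theta}{6}+\tfrac{q(1-\theta)}{2q+2}$ fixes the interpolation parameter as $\theta=\tfrac{3q(\lambda-2q-2)}{\lambda(2q-1)}$, which lies in $[0,1]$ throughout $\lambda\in[2q+2,4q+1]$.

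The background norm is then handled by integration by parts. Writing $\int_{\Omega}|\nabla\varphi|^{2q+2}=\int_{\Omega}|\nabla\varphi|^{2q}\,\nabla\varphi\cdot\nabla\varphi$ and applying the divergence theorem, the boundary contribution $\int_{\partial\Omega}\varphi|\nabla\varphi|^{2q}\tfrac{\partial\varphi}{\partial\nu}$ vanishes thanks to the hypothesis $\varphi\,\partial\varphi/\partial\nu=0$ on $\partial\Omega$. Expanding $\nabla\cdot(|\nabla\varphi|^{2q}\nabla\varphi)$ and invoking Cauchy--Schwarz yields
$$
\|\nabla\varphi\|_{L^{2q+2}(\Omega)}^{q+1}\le C_{q}\,\|\varphi\|_{L^{\infty}(\Omega)}\,\bigl\||\nabla\varphi|^{q-1}D^{2}\varphi\bigr\|_{L^{2}(\Omega)}.
$$
Inserting this into the Gagliardo--Nirenberg bound, taking $(1/q)$-th powers, and tracking exponents produces
$$
\frac{1-\theta}{q+1}=\frac{6q-\lambda}{(2q-1)\lambda}\quad\text{and}\quad\frac{\theta}{q}+\frac{1-\theta}{q+1}=\frac{2(\lambda-3)}{(2q-1)\lambda},
$$
matching the asserted exponents on $\|\varphi\|_{L^{\infty}}$ and $\||\nabla\varphi|^{q-1}D^{2}\varphi\|_{L^{2}}$, respectively; the additive summand $C\|\varphi\|_{L^{\infty}}$ in the claim absorbs the lower-order contribution $C\|W\|_{L^{s_0}}$ from the Gagliardo--Nirenberg inequality on the bounded domain.

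The main obstacle I anticipate is the exponent arithmetic: confirming that $\theta$ and the $(q+1)^{-1}$-power from the auxiliary estimate recombine exactly into the stated symbolic form across the full range $\lambda\in[2q+2,4q+1]$. A secondary technical point is the boundary terms: the first integration by parts is clean under the hypothesis $\varphi\,\partial\varphi/\partial\nu=0$, but if a second integration by parts were required (for instance if one tried to push an extra derivative onto $\varphi$ to loosen the $(q+1)$-power above), a residual involving $\partial|\nabla\varphi|^{2}/\partial\nu$ would appear and would need to be controlled via the curvature bound of Lemma \ref{lemmffffgga630jklhhjj}(i) followed by an $\varepsilon$-Young absorption into the leading $\||\nabla\varphi|^{q-1}D^{2}\varphi\|_{L^{2}}$ term.
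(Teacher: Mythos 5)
The paper does not prove this lemma at all --- it is quoted verbatim as Lemma 3.8 of \cite{Winkler11215} --- so there is no in-paper argument to compare against; your reconstruction is in fact the standard proof of that cited result, and it is essentially correct. Setting $W=|\nabla\varphi|^{q}$, your interpolation parameter $\theta=\frac{3q(\lambda-2q-2)}{\lambda(2q-1)}$ is right and lies in $[0,1)$ precisely on the stated range $\lambda\in[2q+2,4q+1]$ (indeed up to $\lambda\le 6q$), the integration by parts giving $\|\nabla\varphi\|_{L^{2q+2}(\Omega)}^{q+1}\le C_q\|\varphi\|_{L^\infty(\Omega)}\||\nabla\varphi|^{q-1}D^2\varphi\|_{L^2(\Omega)}$ is clean under the hypothesis $\varphi\,\partial\varphi/\partial\nu=0$, and the exponent identities $\frac{1-\theta}{q+1}=\frac{6q-\lambda}{(2q-1)\lambda}$ and $\frac{\theta}{q}+\frac{1-\theta}{q+1}=\frac{2(\lambda-3)}{(2q-1)\lambda}$ both check out. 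No second integration by parts (hence no curvature/convexity input) is needed.

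The one point you state incorrectly is the disposal of the lower-order Gagliardo--Nirenberg term: $C\|W\|_{L^{s_0}(\Omega)}^{1/q}=C\|\nabla\varphi\|_{L^{2q+2}(\Omega)}$ is \emph{not} absorbed by $C\|\varphi\|_{L^\infty(\Omega)}$ alone. Writing $A:=\||\nabla\varphi|^{q-1}D^2\varphi\|_{L^2(\Omega)}$ and $B:=\|\varphi\|_{L^\infty(\Omega)}$, your auxiliary estimate gives $\|\nabla\varphi\|_{L^{2q+2}(\Omega)}\le C(AB)^{1/(q+1)}$, and one then checks that $(AB)^{1/(q+1)}=\bigl(A^{\alpha}B^{\beta}\bigr)^{s}B^{t}$ with $\alpha=\frac{2(\lambda-3)}{(2q-1)\lambda}$, $\beta=\frac{6q-\lambda}{(2q-1)\lambda}$, $s=\frac{(2q-1)\lambda}{2(\lambda-3)(q+1)}$, $t=\frac{3(\lambda-2q-2)}{2(\lambda-3)(q+1)}$, where $s,t\ge 0$ and $s+t=1$; Young's inequality then bounds it by $A^{\alpha}B^{\beta}+B$, i.e.\ by \emph{both} terms on the right of \dref{aqwswddaassffssff3.10deerfgghhjuuloollgghhhyhh}. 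With that two-line insertion the argument is complete.
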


Let us state two well-known results of solution of \dref{1.1fghyuisda}.

\begin{lemma}\label{ghjssdeedrfe116lemma70hhjj}
 The solution  of  \dref{1.1fghyuisda} satisfies
  \begin{equation}
 \begin{array}{rl}
 \|n_{\varepsilon}(\cdot,t)\|_{L^1(\Omega)}=\|n_0\|_{L^1(\Omega)}~~~\mbox{for all}~~t\in (0, T_{max})
\end{array}\label{vgbhssddaqwswddaassffssff3.10deerfgghhjuuloollgghhhyhh}
\end{equation}
and
  \begin{equation}
 \begin{array}{rl}
 \|c_{\varepsilon}(\cdot,t)\|_{L^\infty(\Omega)}\leq\|c_0\|_{L^\infty(\Omega)}~~~\mbox{for all}~~t\in (0, T_{max}).
\end{array}\label{hnjmssddaqwswddaassffssff3.10deerfgghhjuuloollgghhhyhh}
\end{equation}
\end{lemma}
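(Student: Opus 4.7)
The plan is to establish the two estimates by the standard mass-conservation argument for the $n$-equation and by a maximum principle argument for the $c$-equation, both of which are largely routine given the structure of \dref{1.1fghyuisda} and the regularity provided by Lemma \ref{lemma70}.

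For the $L^1$-conservation identity \dref{vgbhssddaqwswddaassffssff3.10deerfgghhjuuloollgghhhyhh}, I would integrate the first equation of \dref{1.1fghyuisda} over $\Omega$. The diffusive and chemotactic flux terms produce a boundary integral of $(D_\varepsilon(n_\varepsilon)\nabla n_\varepsilon - n_\varepsilon S_\varepsilon(x,n_\varepsilon,c_\varepsilon)\nabla c_\varepsilon)\cdot\nu$ over $\partial\Omega$, which vanishes thanks to the no-flux boundary condition imposed in \dref{1.1fghyuisda}; note that the construction \dref{3.10gghhjuuloollyuigghhhyy} with $\rho_\varepsilon\in C_0^\infty(\Omega)$ actually forces $S_\varepsilon=0$ near $\partial\Omega$, so this piece vanishes trivially in any case. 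The convective contribution $\int_\Omega u_\varepsilon\cdot\nabla n_\varepsilon$ is rewritten, via integration by parts, as $-\int_\Omega n_\varepsilon\,\nabla\cdot u_\varepsilon+\int_{\partial\Omega} n_\varepsilon u_\varepsilon\cdot\nu$, and both terms vanish because $\nabla\cdot u_\varepsilon\equiv 0$ and $u_\varepsilon|_{\partial\Omega}=0$. This yields $\frac{d}{dt}\int_\Omega n_\varepsilon = 0$, and integration in time gives the claimed identity.

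For the $L^\infty$-bound \dref{hnjmssddaqwswddaassffssff3.10deerfgghhjuuloollgghhhyhh}, the key observation is that the absorption term $-n_\varepsilon c_\varepsilon$ has a favourable sign because $n_\varepsilon\geq 0$ and $c_\varepsilon\geq 0$ (these nonnegativities are provided by Lemma \ref{lemma70}). Thus $c_\varepsilon$ is a classical subsolution of the linear convection-diffusion equation $c_t + u_\varepsilon\cdot\nabla c = \Delta c$ with no-flux boundary data and initial datum $c_0$. The maximum principle, applied to this equation on $\bar{\Omega}\times[0,T]$ for arbitrary $T<T_{max}$, then yields $c_\varepsilon\leq\|c_0\|_{L^\infty(\Omega)}$; alternatively one can test the $c$-equation with $(c_\varepsilon-\|c_0\|_{L^\infty(\Omega)})_+^{p-1}$, use $\nabla\cdot u_\varepsilon=0$ and $u_\varepsilon|_{\partial\Omega}=0$ to discard the convective term, discard the nonpositive absorption and dissipation terms, and pass to the limit $p\to\infty$.

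Neither of the two assertions presents a genuine obstacle: the divergence-free and no-slip structure of $u_\varepsilon$ trivialises the convective contributions, the no-flux conditions eliminate the boundary integrals, and the sign of the consumption term $-n_\varepsilon c_\varepsilon$ makes the comparison argument for $c_\varepsilon$ immediate. The only point requiring a small amount of care is to ensure that the regularity stated in \dref{1.1ddfghyuisda} is sufficient to justify the integration by parts and the application of the maximum principle up to $t=T_{max}$, which is indeed the case since $n_\varepsilon,c_\varepsilon,u_\varepsilon\in C^{2,1}(\bar{\Omega}\times(0,T_{max}))\cap C^0(\bar{\Omega}\times[0,T_{max}))$.
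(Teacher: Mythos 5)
Your proposal is correct and is precisely the standard argument (mass conservation by integrating the $n$-equation using $\nabla\cdot u_\varepsilon=0$, $u_\varepsilon|_{\partial\Omega}=0$ and the vanishing of $S_\varepsilon$ near $\partial\Omega$, plus the maximum principle for the $c$-equation using the sign of $-n_\varepsilon c_\varepsilon$); the paper itself states this lemma as "well-known" and supplies no proof, so there is nothing further to compare.
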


\begin{lemma}\label{lemma630jklhhjj}
For any  $l<\frac{3}{2}$,
there exists $C := C(l, \|n_0\|_{L^1(\Omega)})$ such that
 \begin{equation}\|D u{}(\cdot, t)\|_{L^l(\Omega)}\leq C~~ \mbox{for all}~~ t\in(0, T_{max}).
\label{3.10hhjjgghhjuuloollgghhhyhh}
\end{equation}
\end{lemma}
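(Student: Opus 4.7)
The plan is to derive Lemma \ref{lemma630jklhhjj} as an immediate consequence of the generic Stokes regularity estimate recorded in Lemma \ref{lemmafggg78630jklhhjj}, combined with the mass conservation property of $n_\varepsilon$ already established in Lemma \ref{ghjssdeedrfe116lemma70hhjj}. The equation $u_{\varepsilon t}+\nabla P_\varepsilon=\Delta u_\varepsilon+n_\varepsilon\nabla\phi$ together with $\nabla\cdot u_\varepsilon=0$ and $u_\varepsilon|_{\partial\Omega}=0$ fits exactly the framework of Lemma \ref{lemmafggg78630jklhhjj}, where the forcing term enjoys the spatial bound inherited from $n_\varepsilon$ (note that $\nabla\phi\in L^\infty(\Omega)$ by \dref{dd1.1fghyuisdakkkllljjjkk}).

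First I would recall that by \dref{vgbhssddaqwswddaassffssff3.10deerfgghhjuuloollgghhhyhh} one has
\[
\|n_\varepsilon(\cdot,t)\|_{L^1(\Omega)}=\|n_0\|_{L^1(\Omega)} \quad \mbox{for all } t\in(0,T_{max}),
\]
so the hypothesis \dref{3.10gghhjuuloollgghhhy} of Lemma \ref{lemmafggg78630jklhhjj} is satisfied with $r=1$ and $K=\|n_0\|_{L^1(\Omega)}$. Since $r=1\leq 3$, the admissibility condition \dref{3.10gghhjuulooll} on $l$ reduces to $l<\frac{3r}{3-r}=\frac{3}{2}$, which is precisely the assumption in the statement to be proved.

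Applying Lemma \ref{lemmafggg78630jklhhjj} with these choices then directly yields a constant $C=C(l,\|n_0\|_{L^1(\Omega)})$ with
\[
\|Du_\varepsilon(\cdot,t)\|_{L^l(\Omega)}\leq C \quad \mbox{for all } t\in(0,T_{max}),
\]
which is exactly \dref{3.10hhjjgghhjuuloollgghhhyhh}. I do not expect any obstacle here; the only subtlety is to make sure the embedding exponent $l<3/2$ is tight, and this is already built into \dref{3.10gghhjuulooll} at the endpoint $r=1$. The heavier analytical work (upgrading this to higher integrability of $\nabla u_\varepsilon$) is deferred to later lemmata, where the $L^1$-bound of $n_\varepsilon$ will be strengthened via the gradient-like functional \dref{ghbbnnnnmmnjjffff1.1hhjjddssggtyy}.
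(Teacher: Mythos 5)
Your proposal is correct and is exactly the paper's own argument: the paper likewise invokes the cited Stokes regularity lemma with $r=1$ together with the mass conservation identity \dref{vgbhssddaqwswddaassffssff3.10deerfgghhjuuloollgghhhyhh}, so that the admissibility condition $l<\frac{3r}{3-r}$ becomes $l<\frac{3}{2}$. No differences worth noting.
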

\begin{proof}
Choosing $r=1$ in Lemma \ref{lemma630jklhhjj} and using \dref{vgbhssddaqwswddaassffssff3.10deerfgghhjuuloollgghhhyhh}, we can get the results.
\end{proof}

\begin{lemma}\label{lemma630jklhhjj}
Let $m>\frac{10}{9}$.
There exists $ C > 0$ independent of $\varepsilon$ such that for every
$\delta_1>0$,
 the solution of \dref{1.1fghyuisda} satisfies
%
%
%
%
\begin{equation}
\int_{\Omega}{|u_{\varepsilon}|^2}+\int_{\Omega}{|\nabla u_{\varepsilon}|^2}\leq\delta_1\int_{\Omega}\frac{D_{\varepsilon}(n_{\varepsilon})|\nabla n_{\varepsilon}|^2}{n_{\varepsilon}}+C~~\mbox{for all}~~ t\in(0, T_{max}),
\label{ddddfgcz2.5ghju48cfg924ghffggyuji}
\end{equation}
\end{lemma}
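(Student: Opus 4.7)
The plan is to test the Stokes equation against $u_\varepsilon$, bound the resulting forcing term by H\"older--Sobolev, and then absorb the $n_\varepsilon$-contribution into the chemotaxis dissipation $\int_\Omega D_\varepsilon(n_\varepsilon)|\nabla n_\varepsilon|^2/n_\varepsilon$ via a Gagliardo--Nirenberg interpolation. For the $\int_\Omega |u_\varepsilon|^2$ term on the left-hand side, the preceding lemma (providing $\|Du_\varepsilon\|_{L^l}\leq C$ for every $l<\frac{3}{2}$) combined with Sobolev embedding and Poincar\'e's inequality already yields a uniform-in-$t$, uniform-in-$\varepsilon$ bound on $\|u_\varepsilon(\cdot,t)\|_{L^2(\Omega)}$, so the actual content of the lemma lies in the control of $\int_\Omega|\nabla u_\varepsilon|^2$.

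First, testing the third equation of \dref{1.1fghyuisda} by $u_\varepsilon$, and using $\nabla\cdot u_\varepsilon=0$ together with $u_\varepsilon|_{\partial\Omega}=0$, produces the energy identity
$$
\frac{1}{2}\frac{d}{dt}\int_\Omega |u_\varepsilon|^2 \,+\, \int_\Omega |\nabla u_\varepsilon|^2 \,=\, \int_\Omega n_\varepsilon\, u_\varepsilon\cdot\nabla\phi.
$$
By H\"older's inequality, the Sobolev embedding $W^{1,2}_0(\Omega)\hookrightarrow L^6(\Omega)$, the assumption $\phi\in W^{1,\infty}(\Omega)$, and Young's inequality, the right-hand side is bounded by $\tfrac12 \|\nabla u_\varepsilon\|_{L^2}^2 + C\|n_\varepsilon\|_{L^{6/5}(\Omega)}^2$, of which the first summand is absorbed on the left.

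Next, $D_\varepsilon(n)\geq C_D n^{m-1}$ together with the chain rule gives
$$
\int_\Omega \frac{D_\varepsilon(n_\varepsilon)|\nabla n_\varepsilon|^2}{n_\varepsilon} \,\geq\, \frac{4 C_D}{m^2}\|\nabla n_\varepsilon^{m/2}\|_{L^2}^2.
$$
Then the three-dimensional Gagliardo--Nirenberg inequality applied to $\tilde n:=n_\varepsilon^{m/2}$, with the low-norm factor $\|\tilde n\|_{L^{2/m}}=\|n_\varepsilon\|_{L^1}^{m/2}$ controlled uniformly by Lemma \ref{ghjssdeedrfe116lemma70hhjj}, delivers $\|n_\varepsilon\|_{L^{6/5}(\Omega)}^2 \leq C\|\nabla n_\varepsilon^{m/2}\|_{L^2}^{2/(3m-1)} + C$. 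Since $m>10/9>2/3$ ensures that the exponent $2/(3m-1)$ is strictly less than $2$, Young's inequality upgrades this to
$$
\|n_\varepsilon\|_{L^{6/5}(\Omega)}^2 \,\leq\, \delta_1 \int_\Omega \frac{D_\varepsilon(n_\varepsilon)|\nabla n_\varepsilon|^2}{n_\varepsilon} + C(\delta_1)
$$
for any prescribed $\delta_1>0$, which combined with the previous step and the uniform $L^2$-bound on $u_\varepsilon$ yields the stated inequality.

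The principal obstacle is the Gagliardo--Nirenberg bookkeeping in the second step: the interpolation exponent must come out strictly sub-quadratic in $\|\nabla n_\varepsilon^{m/2}\|_{L^2}$ so that Young's inequality leaves $\delta_1$ free to be chosen arbitrarily small — this is precisely the role of the hypothesis $m>\frac{10}{9}$ (of which $m>\frac{2}{3}$ is the minimal amount actually consumed in this particular lemma). A secondary subtlety is to reconcile the pointwise-in-$t$ form of the stated inequality with the differential identity naturally produced by testing against $u_\varepsilon$: the uniform $L^2$-bound on $u_\varepsilon$ inherited from the preceding lemma handles the $\int_\Omega|u_\varepsilon|^2$ piece, while the $\int_\Omega|\nabla u_\varepsilon|^2$ piece is to be read in the differential form used in the forthcoming combined entropy/energy argument for $\int_\Omega n_\varepsilon\ln n_\varepsilon+\int_\Omega|\nabla\sqrt{c_\varepsilon}|^2+\int_\Omega|u_\varepsilon|^2$.
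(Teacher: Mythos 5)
Your proposal is correct and follows essentially the same route as the paper: test the Stokes equation against $u_\varepsilon$, estimate $\int_\Omega n_\varepsilon u_\varepsilon\cdot\nabla\phi$ by H\"older and the embedding $W_0^{1,2}(\Omega)\hookrightarrow L^6(\Omega)$, interpolate $\|n_\varepsilon\|_{L^{6/5}(\Omega)}$ between $\|\nabla n_\varepsilon^{m/2}\|_{L^2(\Omega)}$ and the conserved $L^1$-norm via Gagliardo--Nirenberg, and close with Young's inequality using that the exponent $\frac{2}{3m-1}$ is strictly less than $2$. You also correctly diagnose that the left-hand side as stated should carry a $\frac{d}{dt}$ on the $\int_\Omega|u_\varepsilon|^2$ term (as it is used in the subsequent combination lemma), and your reconciliation via the uniform $L^2$-bound on $u_\varepsilon$ is sound.
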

\begin{proof}
Testing the third equation of \dref{1.1fghyuisda} with $u_\varepsilon$, integrating by parts and using $\nabla\cdot u_{\varepsilon}=0$
\begin{equation}
\frac{1}{2}\int_{\Omega}{|u_{\varepsilon}|^2}+\int_{\Omega}{|\nabla u_{\varepsilon}|^2}= \int_{\Omega}n_{\varepsilon}u_{\varepsilon}\cdot\nabla \phi~~\mbox{for all}~~ t\in(0, T_{max}),
\label{ddddfgcz2.5ghju48cfg924ghyuffggji}
\end{equation}
which together with  the H\"{o}lder inequality, \dref{dd1.1fghyuisdakkkllljjjkk}, the continuity of the embedding $W^{1,2}(\Omega)\hookrightarrow L^6(\Omega)$, the Gagliardo--Nirenberg inequality and\dref{vgbhssddaqwswddaassffssff3.10deerfgghhjuuloollgghhhyhh}
 implies that there exists a positive constants $C_1,C_2$ and $C_3$ independent of $\varepsilon$ such that
\begin{equation}
\begin{array}{rl}
\disp\int_{\Omega}n_{\varepsilon}u_{\varepsilon}\cdot\nabla \phi\leq&\disp{\|\nabla \phi\|_{L^\infty(\Omega)}\|n_{\varepsilon}\|_{L^{\frac{6}{5}}(\Omega)}\|\nabla u_{\varepsilon}\|_{L^{2}(\Omega)}}\\
\leq&\disp{C_1\|n_{\varepsilon}\|_{L^{\frac{6}{5}}(\Omega)}\|\nabla u_{\varepsilon}\|_{L^{2}(\Omega)}}\\
\leq&\disp{C_2\|\nabla n_{\varepsilon}^{\frac{m}{2}}\|_{L^{2}(\Omega)}^{\frac{1}{3m-1}}\| n_{\varepsilon}^{\frac{m}{2}}\|_{L^{\frac{2}{m}}(\Omega)}^{\frac{2}{m}-\frac{1}{3m-1}}\|\nabla u_{\varepsilon}\|_{L^{2}(\Omega)}}\\
\leq&\disp{C_3(\|\nabla n_{\varepsilon}^{\frac{m}{2}}\|_{L^{2}(\Omega)}^{\frac{1}{3m-1}}+1)\|\nabla u_{\varepsilon}\|_{L^{2}(\Omega)}~~\mbox{for all}~~ t\in(0, T_{max}).}\\
\end{array}
\label{ddddfgcz2.5ghju48cfg924ghyuji}
\end{equation}
Next, with the help of the
Young inequality and $m>\frac{10}{9}$, inserting \dref{ddddfgcz2.5ghju48cfg924ghyuji} into \dref{ddddfgcz2.5ghju48cfg924ghyuffggji} and \dref{ghnjmk9161gyyhuug}, we derive that
\begin{equation}
\begin{array}{rl}
\disp\frac{1}{2}\int_{\Omega}{|u_{\varepsilon}|^2}+\frac{1}{2}\int_{\Omega}{|\nabla u_{\varepsilon}|^2}\leq &\disp{C_4(\|\nabla n_{\varepsilon}^{\frac{m}{2}}\|_{L^{2}(\Omega)}^{\frac{2}{3m-1}}+1)}\\
\leq &\disp{\frac{\delta_1}{2C_D}\|\nabla n_{\varepsilon}^{\frac{m}{2}}\|_{L^{2}(\Omega)}^{\frac{2}{3m-1}}+C_5}\\
\leq &\disp{\frac{\delta_1}{2}\int_{\Omega}\frac{D_{\varepsilon}(n_{\varepsilon})|\nabla n_{\varepsilon}|^2}{n_{\varepsilon}}+C_5~~\mbox{for all}~~ t\in(0, T_{max}),}\\
\end{array}
\label{ddddfgczxxccdd2.5ghju4cddfff8cfg924ghyuji}
\end{equation}
and some positive constants $C_4$ and $C_5.$  
\end{proof}

\begin{lemma}\label{ghjsgghhsdeedrfe116lemma70hhjj} Let $\frac{10}{9}< m\leq2$.
 There exist $\mu_0$ and  $C > 0$ independent of $\varepsilon$ such that for every  $\delta_i(i=2,3,4,5)>0$
 \begin{equation}
 \begin{array}{rl}
 &\disp{\frac{d}{dt}\int_{\Omega}\frac{|\nabla c_{\varepsilon}|^2}{c_{\varepsilon}}+\mu_0\int_{\Omega}c_{\varepsilon}|D^2\ln c_{\varepsilon}|^2+(\mu_0-\frac{\delta_2}{4}-\frac{\delta_3}{4})\int_{\Omega}\frac{|\nabla c_{\varepsilon}|^4}{c_{\varepsilon}^3}}\\
 \leq&\disp{ (\frac{\delta_4}{4}+\frac{\delta_5}{4})\int_{\Omega}\frac{D_{\varepsilon}(n_{\varepsilon})|\nabla n_{\varepsilon}|^2}{n_{\varepsilon}}+\frac{4}{\delta_2}\|c_0\|_{L^\infty(\Omega)}\int_{\Omega}|\nabla u_{\varepsilon}|^2+C~~~\mbox{for all}~~t\in (0, T_{max})}\\
\end{array}\label{vgbhsvvbbsddaqwswddaassffssff3.10deerfgghhjuuloollgghhhyhh}
\end{equation}
\end{lemma}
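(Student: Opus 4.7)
The plan is to differentiate $\int_\Omega|\nabla c_\varepsilon|^2/c_\varepsilon$ in time, substitute the $c_\varepsilon$-equation from \dref{1.1fghyuisda}, and split the resulting right-hand side into a diffusion part, a convection part and a reaction part, each of which will be bounded through a tailored Young inequality. Starting from
$$\frac{d}{dt}\int_\Omega \frac{|\nabla c_\varepsilon|^2}{c_\varepsilon}=2\int_\Omega\frac{\nabla c_\varepsilon\cdot\nabla c_{\varepsilon t}}{c_\varepsilon}-\int_\Omega\frac{|\nabla c_\varepsilon|^2}{c_\varepsilon^2}\,c_{\varepsilon t},$$
I insert $c_{\varepsilon t}=\Delta c_\varepsilon-n_\varepsilon c_\varepsilon-u_\varepsilon\cdot\nabla c_\varepsilon$ and integrate by parts, using $\nabla c_\varepsilon\cdot\nu=0$, $u_\varepsilon|_{\partial\Omega}=0$ and $\nabla\cdot u_\varepsilon=0$.

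The pure diffusion contribution then collapses to $-2\int|\Delta c_\varepsilon|^2/c_\varepsilon+\int|\nabla c_\varepsilon|^2\Delta c_\varepsilon/c_\varepsilon^2$ plus a boundary term $\int_{\partial\Omega}c_\varepsilon^{-1}\partial_\nu|\nabla c_\varepsilon|^2$; the latter is non-positive by the convexity of $\Omega$ through Lemma \ref{lemmffffgga630jklhhjj}(i), and the former is converted by Lemma \ref{lemmffffgga630jklhhjj}(iv) into the two good terms $\mu_0\int c_\varepsilon|D^2\ln c_\varepsilon|^2$ and $\mu_0\int|\nabla c_\varepsilon|^4/c_\varepsilon^3$, modulo an additive $C\int c_\varepsilon\leq C|\Omega|\|c_0\|_{L^\infty(\Omega)}$ that is harmless by \dref{hnjmssddaqwswddaassffssff3.10deerfgghhjuuloollgghhhyhh}.

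For the convective contribution, the identity $\nabla(u_\varepsilon\cdot\nabla c_\varepsilon)=(\nabla u_\varepsilon)^\top\nabla c_\varepsilon+\tfrac{1}{2}u_\varepsilon\cdot\nabla|\nabla c_\varepsilon|^2$, together with an integration by parts of the last piece against $1/c_\varepsilon$, causes the two scalar terms involving $u_\varepsilon\cdot\nabla c_\varepsilon\cdot|\nabla c_\varepsilon|^2/c_\varepsilon^2$ to cancel exactly, so only $-2\int\nabla c_\varepsilon^\top(\nabla u_\varepsilon)\nabla c_\varepsilon/c_\varepsilon$ remains. A weighted Young inequality together with the pointwise bound $c_\varepsilon\leq\|c_0\|_{L^\infty(\Omega)}$ then delivers the target convective piece $\frac{4}{\delta_2}\|c_0\|_{L^\infty(\Omega)}\int|\nabla u_\varepsilon|^2$ at the cost of surrendering an amount $\delta_2/4$ of the good term $\int|\nabla c_\varepsilon|^4/c_\varepsilon^3$.

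The reaction contribution collects into $-2\int\nabla n_\varepsilon\cdot\nabla c_\varepsilon-\int n_\varepsilon|\nabla c_\varepsilon|^2/c_\varepsilon$, the second summand being non-positive and hence discardable. For the first, I would apply Young's inequality, pairing $|\nabla c_\varepsilon|$ with $|\nabla n_\varepsilon|$ via the weights $c_\varepsilon^{-3/4}$ and $(D_\varepsilon(n_\varepsilon)/n_\varepsilon)^{1/2}\geq C_D^{1/2}n_\varepsilon^{(m-2)/2}$, so as to generate the $\delta_3/4$-absorption in $\int|\nabla c_\varepsilon|^4/c_\varepsilon^3$ and the two diffusion-type pieces with coefficients $\delta_4/4$ and $\delta_5/4$ multiplying $\int D_\varepsilon(n_\varepsilon)|\nabla n_\varepsilon|^2/n_\varepsilon$. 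The main obstacle lies here: the Young residual has the form $c_\varepsilon^{\alpha}n_\varepsilon^{\beta(m)}$, and its integral must be uniformly bounded in $t$ and $\varepsilon$ purely from $\|n_\varepsilon\|_{L^1(\Omega)}=\|n_0\|_{L^1(\Omega)}$ and $\|c_\varepsilon\|_{L^\infty(\Omega)}\leq\|c_0\|_{L^\infty(\Omega)}$ (Lemma \ref{ghjssdeedrfe116lemma70hhjj}); it is precisely the restriction $m\leq 2$ that keeps $\beta(m)$ in a range where the elementary estimate $s^{\beta(m)}\leq s+1$ closes the bookkeeping. Combining the three parts yields \dref{vgbhsvvbbsddaqwswddaassffssff3.10deerfgghhjuuloollgghhhyhh}.
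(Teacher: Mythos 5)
Your overall decomposition (diffusion part handled via Lemma \ref{lemmffffgga630jklhhjj}, exact cancellation of the scalar convective terms leaving $-2\int_\Omega c_\varepsilon^{-1}\nabla c_\varepsilon\cdot(\nabla u_\varepsilon\nabla c_\varepsilon)$, and the reaction contribution reduced to $-2\int_\Omega\nabla n_\varepsilon\cdot\nabla c_\varepsilon$ after discarding the non-positive term) matches the paper's proof step for step. The gap is in your final bookkeeping of the Young residual. After the two-step Young inequality the residual is, up to constants, $\int_\Omega n_\varepsilon^{4-2m}c_\varepsilon^3$, so $\beta(m)=4-2m$, and your claim that the restriction $m\leq 2$ keeps $\beta(m)$ in a range where $s^{\beta(m)}\leq s+1$ closes the argument is only true for $m\geq\frac{3}{2}$. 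On the subinterval $\frac{10}{9}<m<\frac{3}{2}$ one has $4-2m>1$, so $\int_\Omega n_\varepsilon^{4-2m}$ is \emph{not} controlled by $\|n_\varepsilon\|_{L^1(\Omega)}$ and $\|c_\varepsilon\|_{L^\infty(\Omega)}$ alone, and the argument as you describe it does not close there.

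The paper handles that case by interpolation: by the Gagliardo--Nirenberg inequality and mass conservation, $\int_\Omega n_\varepsilon^{4-2m}\leq C\bigl(\|\nabla n_\varepsilon^{m/2}\|_{L^2(\Omega)}^{\frac{6(3-2m)}{3m-1}}+1\bigr)$, and the exponent $\frac{6(3-2m)}{3m-1}$ is strictly less than $2$ exactly when $m>\frac{10}{9}$, so Young's inequality absorbs this into $\frac{\delta_5}{4}\int_\Omega\frac{D_\varepsilon(n_\varepsilon)|\nabla n_\varepsilon|^2}{n_\varepsilon}$. This is also where the $\delta_5$-term in the statement actually originates; it is not a second output of the Young inequality applied to $\int_\Omega\nabla n_\varepsilon\cdot\nabla c_\varepsilon$, as your sketch suggests. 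The fact that your argument never invokes the hypothesis $m>\frac{10}{9}$ is the signal that this step is missing: that lower bound is needed precisely to absorb the residual in the regime where the elementary $L^1$--$L^\infty$ estimate fails.
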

\begin{proof}
 Firstly, by calculation,  we derive that
\begin{equation}
 \begin{array}{rl}
 \disp\frac{d}{dt}\disp\int_{\Omega}\frac{|\nabla c_{\varepsilon}|^2}{c_{\varepsilon}}=&\disp{2\int_{\Omega}\frac{\nabla c_{\varepsilon}\cdot\nabla c_{\varepsilon t}}{c_{\varepsilon}}-\int_{\Omega}\frac{|\nabla c_{\varepsilon}|^2c_{\varepsilon t}}{c_{\varepsilon}^2}}\\
 =&\disp{-2\int_{\Omega}\frac{\Delta c_{\varepsilon} c_{\varepsilon t}}{c_{\varepsilon}}+\int_{\Omega}\frac{|\nabla c_{\varepsilon}|^2c_{\varepsilon t}}{c_{\varepsilon}^2}}\\
  =&\disp{-2\int_{\Omega}\frac{|\Delta c_{\varepsilon}|^2 }{c_{\varepsilon}}+2\int_{\Omega}\frac{\Delta c_{\varepsilon} n_{\varepsilon} c_{\varepsilon}}{c_{\varepsilon}}+2\int_{\Omega}\frac{\Delta c_{\varepsilon}}{c_{\varepsilon}}u_{\varepsilon}\cdot\nabla c_{\varepsilon}}\\
  &+\disp{\int_{\Omega}\frac{|\nabla c_{\varepsilon}|^2\Delta c_{\varepsilon} }{c_{\varepsilon}^2}-\int_{\Omega}\frac{|\nabla c_{\varepsilon}|^2n_{\varepsilon} c_{\varepsilon} }{c_{\varepsilon}^2}-\int_{\Omega}\frac{|\nabla c_{\varepsilon}|^2u_{\varepsilon}\cdot\nabla c_{\varepsilon} }{c_{\varepsilon}^2}~~~\mbox{for all}~~t\in(0, T_{max}).}\\
\end{array}\label{vgbhsvvbbsddacvvvvbbqwswddaassffssff3.10deerfgghhjuuloollgghhhyhh}
\end{equation}
 Due to (vi) of  Lemma \ref{lemmffffgga630jklhhjj} and the Young inequality, there exist $\mu_0>0$ and $C(\mu_0)>0$
 such that
\begin{equation}
 \begin{array}{rl}
  &\disp{-2\int_{\Omega}\frac{|\Delta c_{\varepsilon}|^2 }{c_{\varepsilon}}+\int_{\Omega}\frac{|\nabla c_{\varepsilon}|^2\Delta c_{\varepsilon} }{c_{\varepsilon}^2}\leq -\mu_0\int_{\Omega}c_{\varepsilon}|D^2\ln c_{\varepsilon}|^2-\mu_0\int_{\Omega}\frac{|\nabla c_{\varepsilon}|^4}{c_{\varepsilon}^3}+C(\mu_0)\int_{\Omega}c_{\varepsilon}}\\
\end{array}\label{vccvvgbhsvvbbsddacvvvvbbqwswddaassffssff3.10deerfgghhjuuloollgghhhyhh}
\end{equation}
for all $t\in (0, T_{max})$.
 On the other hand, for all $t\in (0, T_{max})$, with the help of the computing, the Young inequality and Lemma \ref{ghjssdeedrfe116lemma70hhjj} implies that for any $\delta_2 > 0 $
 \begin{equation}
 \begin{array}{rl}
\disp &2\int_{\Omega}\frac{\Delta c_{\varepsilon}}{c_{\varepsilon}}(u_{\varepsilon}\cdot\nabla c_{\varepsilon})-\int_{\Omega}\frac{|\nabla c_{\varepsilon}|^2}{c_{\varepsilon}^2}u_{\varepsilon}\cdot\nabla c_{\varepsilon}\\
= &\disp{2\int_{\Omega}\frac{|\nabla c_{\varepsilon}|^2}{c_{\varepsilon}^2}u_{\varepsilon}\cdot\nabla c_{\varepsilon}-2\int_{\Omega}\frac{1}{c_{\varepsilon}}\nabla c_{\varepsilon}\cdot(\nabla u_{\varepsilon}\nabla c_{\varepsilon})-2\int_{\Omega}\frac{1}{c_{\varepsilon}}u_{\varepsilon}\cdot D^2c_{\varepsilon}\nabla c_{\varepsilon}+2\int_{\Omega}\frac{1}{c_{\varepsilon}}u_{\varepsilon}\cdot D^2c_{\varepsilon}\nabla c_{\varepsilon}}\\
= &\disp{-2\int_{\Omega}\frac{1}{c_{\varepsilon}}\nabla c_{\varepsilon}\cdot(\nabla u_{\varepsilon}\nabla c_{\varepsilon})}\\
\leq &\disp{\frac{\delta_2}{4}\int_{\Omega}\frac{|\nabla c_{\varepsilon}|^4}{c_{\varepsilon}^3}+\frac{4}{\delta_2}\int_{\Omega}c_{\varepsilon}|\nabla u_{\varepsilon}|^2}\\
\leq &\disp{\frac{\delta_2}{4}\int_{\Omega}\frac{|\nabla c_{\varepsilon}|^4}{c_{\varepsilon}^3}+C_1\int_{\Omega}|\nabla u_{\varepsilon}|^2~~\mbox{for all}~~t\in (0, T_{max}),}\\
\end{array}\label{vgbhccvsvvbbsddacvvvvbbqwswddvvbbaassffssff3.10deerfgghhjuuloollgghhhyhh}
\end{equation}
where $C_1:=\frac{4}{\delta_2}\|c_0\|_{L^\infty(\Omega)}$. In view of integration by parts, the Young inequality, \dref{ghnjmk9161gyyhuug} and \dref{hnjmssddaqwswddaassffssff3.10deerfgghhjuuloollgghhhyhh}, we also derive that
\begin{equation}
 \begin{array}{rl}
 2\disp\int_{\Omega}\frac{\Delta c_{\varepsilon} n_{\varepsilon} c_{\varepsilon}}{c_{\varepsilon}}=&\disp{-2\int_{\Omega}\nabla n_{\varepsilon}\cdot \nabla c_{\varepsilon}}\\
 \leq&\disp{\frac{\delta_3}{4}\int_{\Omega}\frac{|\nabla c_{\varepsilon}|^4}{c_{\varepsilon}^3}+2^{\frac{4}{3}}\delta^{-\frac{1}{3}}_3\int_{\Omega}c_{\varepsilon}|\nabla n_{\varepsilon}|^{\frac{4}{3}}}\\
\leq&\disp{\frac{\delta_3}{4}\int_{\Omega}\frac{|\nabla c_{\varepsilon}|^4}{c_{\varepsilon}^3}+\frac{\delta_4}{4C_{D}}\int_{\Omega}n_{\varepsilon}^{m-2}|\nabla n_{\varepsilon}|^{2}+C_2\int_{\Omega}n_{\varepsilon}^{4-2m}c_{\varepsilon}^3}\\
\leq&\disp{\frac{\delta_3}{4}\int_{\Omega}\frac{|\nabla c_{\varepsilon}|^4}{c_{\varepsilon}^3}+\frac{\delta_4}{4}\int_{\Omega}\frac{D_{\varepsilon}(n_{\varepsilon})|\nabla n_{\varepsilon}|^2}{n_{\varepsilon}}+C_3\int_{\Omega}n_{\varepsilon}^{4-2m}~~
\mbox{for all}~~t\in (0, T_{max}),}\\
\end{array}\label{vgbhsvvbbsddacvvvvcvvvbbqwswddaassffssff3.10deerfgghhjuuloollgghhhyhh}
\end{equation}
where $\delta_3,\delta_4,C_2:=C_2(\delta_3,\delta_4), C_3:=C_3(\delta_3,\delta_4,\|c_0\|_{L^\infty(\Omega)})$  are positive constants.

Case $\frac{10}{9}<m<\frac{3}{2}$:
Due to the Gagliardo--Nirenberg inequality and \dref{vgbhssddaqwswddaassffssff3.10deerfgghhjuuloollgghhhyhh}, we conclude that
\begin{equation}
\begin{array}{rl}
C_3\disp\int_{\Omega}n_{\varepsilon}^{4-2m}=&\disp{C_3\| n_{\varepsilon}^{\frac{m}{2}}\|_{L^{\frac{2(4-2m)}{m}}(\Omega)}^{\frac{2(4-2m)}{m}}}\\
\leq &\disp{C_4\|\nabla n_{\varepsilon}^{\frac{m}{2}}\|_{L^{2}(\Omega)}^{\frac{2(4-2m)\mu_1}{m}}\| n_{\varepsilon}^{\frac{m}{2}}\|_{L^{\frac{2}{m}}(\Omega)}^{\frac{2(4-2m)(1-\mu_1)}{m}}+\| n_{\varepsilon}^{\frac{m}{2}}\|_{L^{\frac{2}{m}}(\Omega)}^{\frac{2(4-2m)}{m}}}\\
\leq &\disp{C_5(\|\nabla n_{\varepsilon}^{\frac{m}{2}}\|_{L^{2}(\Omega)}^{\frac{2(4-2m)\mu_1}{m}}+1)}\\
= &\disp{C_5(\|\nabla n_{\varepsilon}^{\frac{m}{2}}\|_{L^{2}(\Omega)}^{\frac{6(3-2m)}{3m-1}}+1)~~\mbox{for all}~~t\in (0, T_{max}),}\\
\end{array}\label{vgbhsvvbbsddffffggdaccvvcvvvvcvvvbbqwswddaassffssff3.10deerfgghhjuuloollgghhhyhh}
\end{equation}
where $C_4$ and $C_5$ are positive constants,
$$\mu_1=\frac{\frac{3m}{2}-\frac{3m}{2(4-2m)}}{\frac{3m-1}{2}}\in(0,1).$$
Now, in view of $m>\frac{10}{9},$ with the help of the Young inequality and \dref{vgbhsvvbbsddffffggdaccvvcvvvvcvvvbbqwswddaassffssff3.10deerfgghhjuuloollgghhhyhh}, for any $\delta_5>0$, we have
\begin{equation}
\begin{array}{rl}
&\disp{C_3\int_{\Omega}n_{\varepsilon}^{4-2m}\leq\frac{\delta_5}{4C_D}\|\nabla n_{\varepsilon}^{\frac{m}{2}}\|_{L^{2}(\Omega)}^{2}+C_6~~\mbox{for all}~~t\in (0, T_{max})}\\
\end{array}\label{vgbhccvvvvsvvbbsddffffggdaccvvcvvvvcvvvbbqwswddaassffssff3.10deerfgghhjuuloollgghhhyhh}
\end{equation}
with some $C_6>0$.

Case $\frac{3}{2}\leq m\leq2$: With the help of the Young inequality and \dref{vgbhssddaqwswddaassffssff3.10deerfgghhjuuloollgghhhyhh}, we derive that
\begin{equation}
\begin{array}{rl}
C_3\disp\int_{\Omega}n_{\varepsilon}^{4-2m}
\leq &\disp{\int_{\Omega}n_{\varepsilon}+C_7}\\
\leq &\disp{C_8~~\mbox{for all}~~t\in (0, T_{max})}\\
\end{array}\label{vgbhsvvbbsdffssff3.10deerfgghhjuuloollgghhhyhh}
\end{equation}
where $C_7$ and $C_8$ are positive constants independent of $\varepsilon.$
Finally, collecting
 \dref{vccvvgbhsvvbbsddacvvvvbbqwswddaassffssff3.10deerfgghhjuuloollgghhhyhh}--\dref{vgbhsvvbbsdffssff3.10deerfgghhjuuloollgghhhyhh}
 and \dref{vgbhsvvbbsddacvvvvbbqwswddaassffssff3.10deerfgghhjuuloollgghhhyhh}, we can get the results.
\end{proof}

\begin{lemma}\label{ghjhhjssjkkllsgghhsdeedrfe116lemma70hhjj}  Let $\frac{10}{9}< m\leq2$ and $\delta>0$.
There is $C > 0$ such that for any 
$\delta_6$ and  $\delta_7$
 \begin{equation}
 \begin{array}{rl}
 \disp\frac{d}{dt}\int_{\Omega}n_{\varepsilon} \ln n_{\varepsilon}+
(1-\frac{\delta_7}{4})\int_{\Omega}\frac{D_{\varepsilon}(n_{\varepsilon})|\nabla n_{\varepsilon}|^2}{n_{\varepsilon}}\leq \frac{\delta_6}{4}\int_{\Omega}\frac{|\nabla c_{\varepsilon}|^4}{c_{\varepsilon}^3} +C~\mbox{for all}~t\in (0, T_{max}).
\end{array}\label{vgccvssbbbbhsvvbbsddaqwswddaassffssff3.10deerfgghhjuuloollgghhhyhh}
\end{equation}
\end{lemma}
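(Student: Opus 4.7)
The plan is to derive the standard entropy identity for the first equation of \dref{1.1fghyuisda} and then split the rotational cross-term into two Young inequalities, one consuming a bit of the diffusion dissipation and one producing the desired $|\nabla c_\varepsilon|^4/c_\varepsilon^3$ term. Multiplying the $n_\varepsilon$-equation by $1+\ln n_\varepsilon$, integrating over $\Omega$, and using $\nabla\cdot u_\varepsilon=0$ together with the no-flux boundary conditions to kill the convective term, I obtain the identity
\begin{equation*}
\frac{d}{dt}\int_\Omega n_\varepsilon\ln n_\varepsilon+\int_\Omega\frac{D_\varepsilon(n_\varepsilon)|\nabla n_\varepsilon|^2}{n_\varepsilon}=\int_\Omega \bigl(S_\varepsilon(x,n_\varepsilon,c_\varepsilon)\nabla c_\varepsilon\bigr)\cdot\nabla n_\varepsilon.
\end{equation*}

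Next, invoking \dref{x1.73142vghf48gg}, \dref{3.10gghhjuuloollyuigghhhyy} and \dref{hnjmssddaqwswddaassffssff3.10deerfgghhjuuloollgghhhyhh} to control $|S_\varepsilon|\le S_0(\|c_0\|_{L^\infty(\Omega)})$, I apply Young's inequality in the form
\begin{equation*}
\bigl(S_\varepsilon\nabla c_\varepsilon\bigr)\cdot\nabla n_\varepsilon\le \frac{\delta_7}{8}\cdot\frac{D_\varepsilon(n_\varepsilon)|\nabla n_\varepsilon|^2}{n_\varepsilon}+C_1\cdot\frac{n_\varepsilon|\nabla c_\varepsilon|^2}{D_\varepsilon(n_\varepsilon)},
\end{equation*}
and then use $D_\varepsilon(n_\varepsilon)\ge C_D n_\varepsilon^{m-1}$ from \dref{ghnjmk9161gyyhuug} to estimate $n_\varepsilon/D_\varepsilon(n_\varepsilon)\le C_D^{-1}n_\varepsilon^{2-m}$, turning the last term into $C_2\int n_\varepsilon^{2-m}|\nabla c_\varepsilon|^2$. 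A second Young inequality with exponents $(2,2)$, written as
\begin{equation*}
n_\varepsilon^{2-m}|\nabla c_\varepsilon|^2=\bigl(c_\varepsilon^{3/2}n_\varepsilon^{2-m}\bigr)\cdot\frac{|\nabla c_\varepsilon|^2}{c_\varepsilon^{3/2}}\le \frac{\delta_6}{4}\cdot\frac{|\nabla c_\varepsilon|^4}{c_\varepsilon^3}+C_3\,c_\varepsilon^3\,n_\varepsilon^{4-2m},
\end{equation*}
produces exactly the target right-hand side modulo the remainder $C_4\int n_\varepsilon^{4-2m}$ (using $c_\varepsilon\le\|c_0\|_{L^\infty(\Omega)}$).

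The final step, and the key obstacle, is controlling $\int n_\varepsilon^{4-2m}$ by the remaining dissipation. If $\frac32\le m\le 2$, then $4-2m\le 1$ and Young's inequality plus the mass conservation \dref{vgbhssddaqwswddaassffssff3.10deerfgghhjuuloollgghhhyhh} yields a pure constant. If $\frac{10}{9}<m<\frac32$, I reproduce the Gagliardo--Nirenberg interpolation carried out in the proof of Lemma \ref{ghjsgghhsdeedrfe116lemma70hhjj}, namely
\begin{equation*}
\int_\Omega n_\varepsilon^{4-2m}=\|n_\varepsilon^{m/2}\|_{L^{(8-4m)/m}(\Omega)}^{(8-4m)/m}\le C_5\bigl(\|\nabla n_\varepsilon^{m/2}\|_{L^2(\Omega)}^{\frac{6(3-2m)}{3m-1}}+1\bigr).
\end{equation*}
The threshold $m>\frac{10}{9}$ is exactly what makes the exponent $\frac{6(3-2m)}{3m-1}$ strictly less than $2$, so a final Young inequality produces $\frac{\delta_7}{8C_D^{-1}\cdot(4/m^2)}\|\nabla n_\varepsilon^{m/2}\|_{L^2}^2 + C_6$, and the pointwise identity $|\nabla n_\varepsilon^{m/2}|^2=\frac{m^2}{4}n_\varepsilon^{m-2}|\nabla n_\varepsilon|^2\le\frac{m^2}{4C_D}\cdot\frac{D_\varepsilon(n_\varepsilon)|\nabla n_\varepsilon|^2}{n_\varepsilon}$ absorbs the gradient part into a further $\frac{\delta_7}{8}$-fraction of the dissipation. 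Combining the two $\frac{\delta_7}{8}$-pieces into $\frac{\delta_7}{4}\int\frac{D_\varepsilon(n_\varepsilon)|\nabla n_\varepsilon|^2}{n_\varepsilon}$ and bundling all absolute constants into $C$ yields the asserted inequality.
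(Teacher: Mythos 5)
Your argument is correct and follows essentially the same route as the paper: the entropy identity for $\int_\Omega n_\varepsilon\ln n_\varepsilon$, a two-step Young decomposition of the cross term into a $\delta_7$-fraction of the diffusive dissipation, a $\delta_6$-fraction of $\int_\Omega|\nabla c_\varepsilon|^4/c_\varepsilon^3$, and a leftover $\int_\Omega n_\varepsilon^{4-2m}$, which you then close by the same Gagliardo--Nirenberg interpolation whose exponent $\frac{6(3-2m)}{3m-1}<2$ is precisely the condition $m>\frac{10}{9}$ (with the case $\frac{3}{2}\le m\le 2$ handled by mass conservation, exactly as in the paper's Lemma 2.8). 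The only cosmetic difference is the order of the two Young inequalities --- the paper peels off $|\nabla c_\varepsilon|^4/c_\varepsilon^3$ first with exponents $(4,\frac{4}{3})$ and passes through $\int_\Omega c_\varepsilon|\nabla n_\varepsilon|^{4/3}$, whereas you peel off the dissipation first with exponents $(2,2)$ and pass through $\int_\Omega n_\varepsilon^{2-m}|\nabla c_\varepsilon|^2$ --- which changes nothing substantive.
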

\begin{proof}
Using these estimates and the first equation of \dref{1.1fghyuisda}, from integration by parts we obtain
\begin{equation}
 \begin{array}{rl}
\frac{d}{dt}\disp\int_{\Omega}n_{\varepsilon} \ln n_{\varepsilon} =&\disp{\int_{\Omega}n_{\varepsilon t} \ln n_{\varepsilon}+
\int_{\Omega}n_{\varepsilon t}}\\
=&\disp{\int_{\Omega}\nabla\cdot(D_{\varepsilon}(n_{\varepsilon})\nabla n_{\varepsilon} ) \ln n_{\varepsilon}-
\int_{\Omega}\ln n_{\varepsilon}\nabla\cdot(n_{\varepsilon}S_\varepsilon(x, n_{\varepsilon}, c_{\varepsilon})\nabla c_{\varepsilon})-\int_{\Omega}\ln n_{\varepsilon}u_{\varepsilon}\cdot\nabla n_{\varepsilon}}\\
\leq&\disp{-\int_{\Omega}\frac{D_{\varepsilon}(n_{\varepsilon})|\nabla n_{\varepsilon}|^2}{n_{\varepsilon}}+
\int_{\Omega} S_0(c_{\varepsilon})|\nabla n_{\varepsilon}||\nabla c_{\varepsilon}|}\\
\end{array}\label{vgccvsckkcvvsbbbbhsvvbbsddaqwswddaassffssff3.10deerfgghhjuuloollgghhhyhh}
\end{equation}
for all $t\in (0, T_{max})$. 
Now, in view of \dref{hnjmssddaqwswddaassffssff3.10deerfgghhjuuloollgghhhyhh}, employing the same argument of \dref{vgbhsvvbbsddacvvvvcvvvbbqwswddaassffssff3.10deerfgghhjuuloollgghhhyhh}--\dref{vgbhsvvbbsdffssff3.10deerfgghhjuuloollgghhhyhh}, for any $\delta_6>0$ and $\delta_7>0$, we conclude that
\begin{equation}
 \begin{array}{rl}
 &\disp{\int_{\Omega} S_0(c_{\varepsilon})|\nabla n_{\varepsilon}||\nabla c_{\varepsilon}|}\\
\leq &\disp{S_0(\|c_0\|_{L^\infty(\Omega)})\int_{\Omega}|\nabla n_{\varepsilon}| |\nabla c_{\varepsilon}|}\\
 \leq&\disp{\frac{\delta_6}{4}\int_{\Omega}\frac{|\nabla c_{\varepsilon}|^4}{c_{\varepsilon}^3}+\frac{\delta_7}{4}\int_{\Omega}\frac{D_{\varepsilon}(n_{\varepsilon})|\nabla n_{\varepsilon}|^2}{n_{\varepsilon}}+C_9~~
\mbox{for all}~~t\in (0, T_{max}).}\\
\end{array}\label{vgbhsvvbbsddacvvvccvcvvvbbqwswddaassffssff3.10deerfgghhjuuloollgghhhyhh}
\end{equation}
with $C_9>0$ independent of $\varepsilon.$
Now, in conjunction with \dref{vgccvsckkcvvsbbbbhsvvbbsddaqwswddaassffssff3.10deerfgghhjuuloollgghhhyhh} and \dref{vgbhsvvbbsddacvvvccvcvvvbbqwswddaassffssff3.10deerfgghhjuuloollgghhhyhh}, we get the results.
This completes the proof of Lemma \ref{ghjhhjssjkkllsgghhsdeedrfe116lemma70hhjj}.
\end{proof}
Properly combining Lemmata \ref{lemma630jklhhjj}--\ref{ghjhhjssjkkllsgghhsdeedrfe116lemma70hhjj}, we arrive at the following Lemma, which plays a key rule in obtaining
 the existence of solutions to \dref{1.1fghyuisda}.
\begin{lemma}\label{lemmakkllgg4563025xxhjklojjkkk}
Let $\frac{10}{9}< m\leq2$ and  $S$ satisfy  \dref{x1.73142vghf48rtgyhu}--\dref{x1.73142vghf48gg}.
Suppose that \dref{ghnjmk9161gyyhuug} and \dref{dd1.1fghyuisdakkkllljjjkk}--\dref{ccvvx1.731426677gg}
holds.
Then there exists $C>0$ such that the solution of \dref{1.1fghyuisda} satisfies
\begin{equation}
\begin{array}{rl}
&\disp{\int_{\Omega}n_{\varepsilon}(\cdot,t)\ln n_{\varepsilon}(\cdot,t)+\int_{\Omega}|\nabla\sqrt{c_{\varepsilon}}(\cdot,t)|^2+\int_{\Omega}|u_{\varepsilon}(\cdot,t)|^2\leq C}\\
\end{array}
\label{czfvgb2.5ghhjuyuiihjj}
\end{equation}
for all $t\in(0, T_{max})$.
Moreover,
for each $T\in(0, T_{max})$, one can find a constant $C > 0$ such that
\begin{equation}
\begin{array}{rl}
&\disp{\int_{0}^T\int_{\Omega}  n_{\varepsilon}^{m-2} |\nabla {n_{\varepsilon}}|^2\leq C}\\
\end{array}
\label{bnmbncz2.5ghhjuyuiihjj}
\end{equation}
and
\begin{equation}
\begin{array}{rl}
&\disp{\int_{0}^T\int_{\Omega} |\nabla {c_{\varepsilon}}|^4\leq C}\\
\end{array}
\label{vvcz2.5ghhjuyuiihjj}
\end{equation}
as well as
\begin{equation}
\begin{array}{rl}
&\disp{\int_{0}^T\int_{\Omega}c_{\varepsilon}|D^2\ln c_{\varepsilon}|^2\leq C.}\\
\end{array}
\label{cvffvbgvvcz2.5ghhjuyuiihjj}
\end{equation}
\end{lemma}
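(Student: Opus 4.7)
The plan is to combine the three differential inequalities of Lemmata~\ref{lemma630jklhhjj}--\ref{ghjhhjssjkkllsgghhsdeedrfe116lemma70hhjj} into a single ODI for the composite energy functional
\[
F_\varepsilon(t):=\int_\Omega n_\varepsilon\ln n_\varepsilon+\alpha\int_\Omega\frac{|\nabla c_\varepsilon|^2}{c_\varepsilon}+\beta\int_\Omega|u_\varepsilon|^2,
\]
with $\alpha,\beta>0$ to be chosen. Since $|\nabla\sqrt{c_\varepsilon}|^2=\tfrac{1}{4}\frac{|\nabla c_\varepsilon|^2}{c_\varepsilon}$, a uniform-in-time bound on $F_\varepsilon$ over $(0,T)$ is exactly what \eqref{czfvgb2.5ghhjuyuiihjj} asks for.

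First I would add the inequality of Lemma~\ref{ghjhhjssjkkllsgghhsdeedrfe116lemma70hhjj}, $\alpha$ times that of Lemma~\ref{ghjsgghhsdeedrfe116lemma70hhjj}, and $\beta$ times the natural differential form of Lemma~\ref{lemma630jklhhjj}, namely $\frac{d}{dt}\int_\Omega|u_\varepsilon|^2+\int_\Omega|\nabla u_\varepsilon|^2\leq\delta_1\int_\Omega\frac{D_\varepsilon(n_\varepsilon)|\nabla n_\varepsilon|^2}{n_\varepsilon}+C$. Grouping the resulting terms produces an ODI of the schematic form
\[
\frac{d}{dt}F_\varepsilon+\Lambda_1\!\int_\Omega\!\frac{D_\varepsilon(n_\varepsilon)|\nabla n_\varepsilon|^2}{n_\varepsilon}+\alpha\mu_0\!\int_\Omega\!c_\varepsilon|D^2\ln c_\varepsilon|^2+\Lambda_2\!\int_\Omega\!\frac{|\nabla c_\varepsilon|^4}{c_\varepsilon^3}+\Lambda_3\!\int_\Omega\!|\nabla u_\varepsilon|^2\leq C,
\]
where $\Lambda_1=1-\tfrac{\delta_7}{4}-\alpha\tfrac{\delta_4+\delta_5}{4}-\beta\delta_1$, $\Lambda_2=\alpha(\mu_0-\tfrac{\delta_2+\delta_3}{4})-\tfrac{\delta_6}{4}$, and $\Lambda_3=\beta-\tfrac{4\alpha}{\delta_2}\|c_0\|_{L^\infty(\Omega)}$.

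The next step is to fix the free parameters so that $\Lambda_1,\Lambda_2,\Lambda_3>0$, which is possible in the following sequential order: set $\alpha=1$; pick $\delta_2,\delta_3>0$ with $\tfrac{\delta_2+\delta_3}{4}\leq\tfrac{\mu_0}{2}$ and then $\delta_6>0$ so small that $\Lambda_2\geq\tfrac{\mu_0}{4}$; take $\beta>\tfrac{4}{\delta_2}\|c_0\|_{L^\infty(\Omega)}$; finally choose $\delta_4,\delta_5,\delta_7>0$ small and invoke Lemma~\ref{lemma630jklhhjj} with a correspondingly small $\delta_1>0$ so that $\Lambda_1\geq\tfrac{1}{2}$. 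The outcome is a clean inequality $\tfrac{d}{dt}F_\varepsilon+\kappa\mathcal{D}_\varepsilon\leq C$, where $\mathcal{D}_\varepsilon$ denotes the sum of the four non-negative dissipation integrals above and $\kappa>0$ is $\varepsilon$-independent.

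Finally, since $s\ln s\geq-e^{-1}$ forces $F_\varepsilon\geq-|\Omega|/e$, an integration in $t\in(0,T)$ simultaneously yields $F_\varepsilon(t)\leq F_\varepsilon(0)+CT+|\Omega|/e$, which is \eqref{czfvgb2.5ghhjuyuiihjj}, together with the spacetime bound $\int_0^T\mathcal{D}_\varepsilon\leq C(T)$. The latter gives \eqref{cvffvbgvvcz2.5ghhjuyuiihjj} immediately; \eqref{bnmbncz2.5ghhjuyuiihjj} then follows from $\frac{D_\varepsilon(n_\varepsilon)|\nabla n_\varepsilon|^2}{n_\varepsilon}\geq C_Dn_\varepsilon^{m-2}|\nabla n_\varepsilon|^2$ via \eqref{ghnjmk9161gyyhuug}; and \eqref{vvcz2.5ghhjuyuiihjj} follows by writing $|\nabla c_\varepsilon|^4=\frac{|\nabla c_\varepsilon|^4}{c_\varepsilon^3}\cdot c_\varepsilon^3\leq\|c_0\|_{L^\infty(\Omega)}^3\frac{|\nabla c_\varepsilon|^4}{c_\varepsilon^3}$ thanks to \eqref{hnjmssddaqwswddaassffssff3.10deerfgghhjuuloollgghhhyhh}. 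The principal difficulty is purely the parameter bookkeeping: because $\Lambda_1$ couples $\beta$ with $\delta_1$, $\Lambda_2$ couples $\alpha$ with $\delta_2,\delta_3,\delta_6$, and $\Lambda_3$ couples $\alpha,\beta$ with $\delta_2$, the choices must be made in the above sequential order. The structural observation that makes the whole scheme work is that $\int_\Omega|\nabla u_\varepsilon|^2$, which appears on the right of Lemma~\ref{ghjsgghhsdeedrfe116lemma70hhjj}, is generated as dissipation on the left of Lemma~\ref{lemma630jklhhjj} with a free multiplier $\beta$ at our disposal to absorb it; likewise the $\frac{D_\varepsilon|\nabla n_\varepsilon|^2}{n_\varepsilon}$ on the right of Lemmata~\ref{lemma630jklhhjj} and \ref{ghjsgghhsdeedrfe116lemma70hhjj} is absorbed by the full-strength dissipation on the left of Lemma~\ref{ghjhhjssjkkllsgghhsdeedrfe116lemma70hhjj}.
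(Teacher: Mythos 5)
Your proposal is essentially the paper's own proof: the paper forms the same linear combination $\int_\Omega\frac{|\nabla c_\varepsilon|^2}{c_\varepsilon}+L\int_\Omega n_\varepsilon\ln n_\varepsilon+K\int_\Omega|u_\varepsilon|^2$ (your $\alpha,\beta$ are $1/L,K/L$ up to normalization), performs the identical parameter bookkeeping (the paper's choice $\delta_7=1$, $\delta_3=\mu_0$, $\delta_4=\delta_5=L$, $\delta_1=L/(8K)$, $\delta_2=\frac{8}{K}\|c_0\|_{L^\infty(\Omega)}$ with $K$ large is one concrete instance of your sequential scheme), and your structural observation about $\int_\Omega|\nabla u_\varepsilon|^2$ being generated as dissipation with a free multiplier is exactly why the combination closes. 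The only point where your write-up falls short of the stated claim is the last step: integrating $\frac{d}{dt}F_\varepsilon+\kappa\mathcal{D}_\varepsilon\le C$ in time yields $F_\varepsilon(t)\le F_\varepsilon(0)+Ct+|\Omega|/e$, which is not a $t$-independent bound on all of $(0,T_{max})$ when $T_{max}=\infty$; to get \eqref{czfvgb2.5ghhjuyuiihjj} as stated one should first observe that $\mathcal{D}_\varepsilon$ dominates $F_\varepsilon$ itself (Poincar\'e for $u_\varepsilon$, Cauchy--Schwarz together with $\|c_\varepsilon\|_{L^1(\Omega)}\le\|c_0\|_{L^\infty(\Omega)}|\Omega|$ for $\int_\Omega\frac{|\nabla c_\varepsilon|^2}{c_\varepsilon}$, and mass conservation plus Gagliardo--Nirenberg for $\int_\Omega n_\varepsilon\ln n_\varepsilon$), so that $F_\varepsilon'+\kappa' F_\varepsilon\le C$ and hence $F_\varepsilon\le\max\{F_\varepsilon(0),C/\kappa'\}$ --- the "basic calculation" the paper alludes to.
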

\begin{proof}
Take an evident linear combination of the inequalities provided by Lemmata \ref{lemma630jklhhjj}--\ref{ghjhhjssjkkllsgghhsdeedrfe116lemma70hhjj}, we conclude that there exists a positive constant $C_1>0$ such that
\begin{equation}
 \begin{array}{rl}
 &\disp{\frac{d}{dt}\left(\int_{\Omega}\frac{|\nabla c_{\varepsilon}|^2}{c_{\varepsilon}}+L\int_{\Omega}n_{\varepsilon} \ln n_{\varepsilon}+K|u_{\varepsilon}|^2\right)+(K-\frac{4}{\delta_2}\|c_0\|_{L^\infty(\Omega)})\int_{\Omega}|\nabla u_{\varepsilon}|^2+\mu_0\int_{\Omega}c_{\varepsilon}|D^2\ln c_{\varepsilon}|^2}\\
 &\disp{+[(\mu_0-\frac{\delta_2}{4}-\frac{\delta_3}{4})-L\frac{\delta_6}{4}]\int_{\Omega}\frac{|\nabla c_{\varepsilon}|^4}{c_{\varepsilon}^3}
 +[L(1-\frac{\delta_7}{4})-\frac{\delta_4}{4}-\frac{\delta_5}{4}-K\delta_1]\int_{\Omega}\frac{D_{\varepsilon}(n_{\varepsilon})|\nabla n_{\varepsilon}|^2}{n_{\varepsilon}}}\\
 \leq&\disp{ C_1~~~\mbox{for all}~~t\in (0, T_{max,\varepsilon}),}\\
\end{array}\label{vgbccvbbffeerfgghhjuuloollgghhhyhh}
\end{equation}
where $K,L$ are positive constants. Now, choosing $\delta_7=1,$ $\delta_6=\frac{\mu_0}{L},\delta_3=\mu_0,\delta_4=\delta_5=L,\delta_1=\frac{L}{8K}$ and
$\delta_2=\frac{8}{K}\|c_0\|_{L^\infty(\Omega)}$  and $K$ large enough such that $\frac{8}{K}\|c_0\|_{L^\infty(\Omega)}<\mu_0$
in \dref{vgbccvbbffeerfgghhjuuloollgghhhyhh}, we derive that
 \begin{equation}
 \begin{array}{rl}
 &\disp{\frac{d}{dt}\left(\int_{\Omega}\frac{|\nabla c_{\varepsilon}|^2}{c_{\varepsilon}}+L\int_{\Omega}n_{\varepsilon} \ln n_{\varepsilon}+K|u_{\varepsilon}|^2\right)+\frac{K}{2}\int_{\Omega}|\nabla u_{\varepsilon}|^2+\mu_0\int_{\Omega}c_{\varepsilon}|D^2\ln c_{\varepsilon}|^2}\\
 &\disp{+\frac{\mu_0}{4}\int_{\Omega}\frac{|\nabla c_{\varepsilon}|^4}{c_{\varepsilon}^3}
 +\frac{L}{8}\int_{\Omega}\frac{D_{\varepsilon}(n_{\varepsilon})|\nabla n_{\varepsilon}|^2}{n_{\varepsilon}}}\\
 \leq&\disp{ C_2~~~\mbox{for all}~~t\in (0, T_{max,\varepsilon}).}\\
\end{array}\label{vgbccvbbffeerfgghcchjuuloollgghhhyhh}
\end{equation}
and some positive constant $C_2.$
Hence, by some basic calculation,
we can conclude  \dref{czfvgb2.5ghhjuyuiihjj}--\dref{cvffvbgvvcz2.5ghhjuyuiihjj}.
\end{proof}
\begin{remark}\label{ffgg44remark}From Lemma \ref{lemmakkllgg4563025xxhjklojjkkk},  $m>\frac{10}{9}$ yields the \dref{czfvgb2.5ghhjuyuiihjj},
which is the natural energy functional of   $3D$ chemotaxis-stokes system with  nonlinear diffusion and rotation. Furthermore, Lemma \ref{lemmakkllgg4563025xxhjklojjkkk} expands the contents  of \cite{Winkler11215} (see also the  introduction of \cite{Cao22119}, Wang et. al \cite{Wang11215,Wang21215}).
Moreover, rely on  Lemma \ref{lemmakkllgg4563025xxhjklojjkkk},  we also conclude that the large diffusion exponent $m$ ($>\frac{10}{9}$) benefits the existence  of
solutions to \dref{1.1fghyuisda}.
\end{remark}

Employing almost exactly the same arguments as in the proof of Lemma 3.5  and Lemma 3.6 in \cite{Tao71215} (the minor necessary changes are left as an easy exercise to the reader), we conclude the following Lemmata:
\begin{lemma}\label{lemmaghjmk4563025xxhjklojjkkk}
Let $p>1$.
Then the solution of \dref{1.1fghyuisda} from Lemma \ref{lemma70} satisfies
\begin{equation}
\begin{array}{rl}
&\disp{\frac{1}{{p}}\frac{d}{dt}\|n_{\varepsilon}\|^{{{p}}}_{L^{{p}}(\Omega)}+
\frac{C_D(p-1)}{2}\int_{\Omega}n_{\varepsilon}^{m+p-3} |\nabla n_{\varepsilon}|^2 \leq C\int_\Omega n_{\varepsilon}^{p+1-m}|\nabla c_{\varepsilon}|^2   ,}\\
\end{array}
\label{cz2.5ghhjuyuiihjj}
\end{equation}
where $C>0$ is a positive constant depends on $p$ and $C_D$.
\end{lemma}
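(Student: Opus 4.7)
The plan is a standard $L^p$ testing argument on the first equation of \dref{1.1fghyuisda}. I would multiply that equation by $n_\varepsilon^{p-1}$ and integrate over $\Omega$, so that the left-hand side produces $\frac{1}{p}\frac{d}{dt}\int_\Omega n_\varepsilon^p$. The convective contribution $\int_\Omega n_\varepsilon^{p-1}\,u_\varepsilon\cdot\nabla n_\varepsilon = \frac{1}{p}\int_\Omega u_\varepsilon\cdot\nabla n_\varepsilon^p$ vanishes after an integration by parts thanks to $\nabla\cdot u_\varepsilon = 0$ and $u_\varepsilon\big|_{\partial\Omega} = 0$.

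Next, integrating by parts in the diffusion term and using the no-flux boundary condition $\nabla n_\varepsilon\cdot\nu=0$ yields
\[
\int_\Omega n_\varepsilon^{p-1}\nabla\cdot\bigl(D_\varepsilon(n_\varepsilon)\nabla n_\varepsilon\bigr)=-(p-1)\int_\Omega n_\varepsilon^{p-2}D_\varepsilon(n_\varepsilon)|\nabla n_\varepsilon|^2.
\]
The pointwise bound $D_\varepsilon(n_\varepsilon)\geq D(n_\varepsilon)\geq C_D n_\varepsilon^{m-1}$ coming from \dref{ghnjmk9161gyyhuug} turns this into a good dissipative term of the form $-(p-1)C_D\int_\Omega n_\varepsilon^{m+p-3}|\nabla n_\varepsilon|^2$.

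For the chemotactic term, integration by parts gives
\[
-\int_\Omega n_\varepsilon^{p-1}\nabla\cdot\bigl(n_\varepsilon S_\varepsilon(x,n_\varepsilon,c_\varepsilon)\nabla c_\varepsilon\bigr)=(p-1)\int_\Omega n_\varepsilon^{p-1}\bigl(S_\varepsilon\nabla c_\varepsilon\bigr)\cdot\nabla n_\varepsilon.
\]
Combining the structural bound \dref{x1.73142vghf48gg} with \dref{hnjmssddaqwswddaassffssff3.10deerfgghhjuuloollgghhhyhh} to get $|S_\varepsilon|\leq S_0(\|c_0\|_{L^\infty(\Omega)})$, I would split the integrand as
\[
n_\varepsilon^{p-1}|\nabla n_\varepsilon|\,|\nabla c_\varepsilon|=\bigl(n_\varepsilon^{(m+p-3)/2}|\nabla n_\varepsilon|\bigr)\bigl(n_\varepsilon^{(p+1-m)/2}|\nabla c_\varepsilon|\bigr)
\]
and apply Young's inequality with a small parameter chosen so that the resulting $\int_\Omega n_\varepsilon^{m+p-3}|\nabla n_\varepsilon|^2$ contribution absorbs exactly half of the diffusive dissipation, leaving the desired bound $C\int_\Omega n_\varepsilon^{p+1-m}|\nabla c_\varepsilon|^2$ on the right.

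There is no genuine obstacle here — the estimate is of the standard Moser iteration flavour once one has the cancellation of the transport term and the lower bound on $D_\varepsilon$. The only piece of bookkeeping that matters is to balance the Young's splitting so that the factor in front of the surviving $\int_\Omega n_\varepsilon^{m+p-3}|\nabla n_\varepsilon|^2$ dissipation is exactly $\frac{C_D(p-1)}{2}$, matching the form stated in \dref{cz2.5ghhjuyuiihjj}; the constant $C$ then depends only on $p$, $C_D$, and $S_0(\|c_0\|_{L^\infty(\Omega)})$.
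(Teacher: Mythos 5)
Your proposal is correct and coincides with the paper's argument: the paper does not write out a proof but defers to Lemma 3.5 of Tao and Winkler \cite{Tao71215}, which is precisely this standard $L^p$ testing computation — cancellation of the transport term via $\nabla\cdot u_\varepsilon=0$, the lower bound $D_\varepsilon(n_\varepsilon)\geq C_D n_\varepsilon^{m-1}$ for the dissipation, the bound $|S_\varepsilon|\leq S_0(\|c_0\|_{L^\infty(\Omega)})$ from \dref{x1.73142vghf48gg} and \dref{hnjmssddaqwswddaassffssff3.10deerfgghhjuuloollgghhhyhh}, and a Young splitting absorbing half of the diffusive term. Your exponent bookkeeping $\frac{m+p-3}{2}+\frac{p+1-m}{2}=p-1$ is right, so nothing is missing.
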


\begin{lemma}\label{lemmaghjmk45ss63025xxhjklojjkkk}
Let  $q>1.$
Then the solution of \dref{1.1fghyuisda} from Lemma \ref{lemma70} satisfies
\begin{equation}
\begin{array}{rl}
&\disp{\frac{1}{{2{q}}}\frac{d}{dt}\|\nabla c_{\varepsilon}{}\|^{{{2{q}}}}_{L^{{2{q}}}(\Omega)}+\frac{({q}-1)}{{{q}^2}}\int_{\Omega}\left|\nabla |\nabla c_{\varepsilon}{}|^{{q}}\right|^2+\frac{1}{2}\int_\Omega  |\nabla c_{\varepsilon}{}|^{2{q}-2}|D^2c_{\varepsilon}{}|^2}\\
\leq&\disp{C(\int_\Omega n_{\varepsilon}^2{} |\nabla c_{\varepsilon}{}|^{2{q}-2}
+\int_\Omega |Du_{\varepsilon}{}| |\nabla c_{\varepsilon}{}|^{2{q}})   ,}\\
\end{array}
\label{cz2.5ghhjssuyuiihjj}
\end{equation}
where $C>0$ is a positive constant depends on $q $ and $\|c_0\|_{L^\infty(\Omega)}$.
\end{lemma}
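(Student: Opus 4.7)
The plan is to derive this inequality by testing the second equation of \dref{1.1fghyuisda} in a form suited to producing $\|\nabla c_\varepsilon\|_{L^{2q}}^{2q}$. Concretely, I would differentiate and write
\[
\frac{1}{2q}\frac{d}{dt}\int_{\Omega}|\nabla c_{\varepsilon}|^{2q}=\int_{\Omega}|\nabla c_{\varepsilon}|^{2q-2}\nabla c_{\varepsilon}\cdot\nabla c_{\varepsilon t},
\]
and then substitute $c_{\varepsilon t}=\Delta c_{\varepsilon}-n_{\varepsilon}c_{\varepsilon}-u_{\varepsilon}\cdot\nabla c_{\varepsilon}$ and treat the three resulting integrals separately.

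For the diffusion contribution I would use the pointwise identity $\nabla c_{\varepsilon}\cdot\nabla\Delta c_{\varepsilon}=\tfrac12\Delta|\nabla c_{\varepsilon}|^{2}-|D^{2}c_{\varepsilon}|^{2}$ and integrate by parts, writing $\psi:=|\nabla c_{\varepsilon}|^{2}$. This yields
\[
\int_{\Omega}|\nabla c_{\varepsilon}|^{2q-2}\nabla c_{\varepsilon}\cdot\nabla\Delta c_{\varepsilon}= -\frac{2(q-1)}{q^{2}}\int_{\Omega}\bigl|\nabla |\nabla c_{\varepsilon}|^{q}\bigr|^{2}-\int_{\Omega}|\nabla c_{\varepsilon}|^{2q-2}|D^{2}c_{\varepsilon}|^{2}+\frac{1}{2}\int_{\partial\Omega}\psi^{q-1}\frac{\partial\psi}{\partial\nu}.
\]
For the $n_{\varepsilon}c_{\varepsilon}$ contribution I would integrate by parts once more to obtain $\int n_{\varepsilon}c_{\varepsilon}\,\nabla\cdot(|\nabla c_{\varepsilon}|^{2q-2}\nabla c_{\varepsilon})$, bound the divergence pointwise by $(2q-1)|\nabla c_{\varepsilon}|^{2q-2}|D^{2}c_{\varepsilon}|$, and apply Young's inequality together with \dref{hnjmssddaqwswddaassffssff3.10deerfgghhjuuloollgghhhyhh} to produce, up to a small multiple of $\int|\nabla c_{\varepsilon}|^{2q-2}|D^{2}c_{\varepsilon}|^{2}$, a term of the form $C\int n_{\varepsilon}^{2}|\nabla c_{\varepsilon}|^{2q-2}$. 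For the convective piece I would use $\nabla(u_{\varepsilon}\cdot\nabla c_{\varepsilon})=(\nabla u_{\varepsilon})^{T}\nabla c_{\varepsilon}+(u_{\varepsilon}\cdot\nabla)\nabla c_{\varepsilon}$; the second summand contributes $\tfrac{1}{2q}\int u_{\varepsilon}\cdot\nabla|\nabla c_{\varepsilon}|^{2q}$, which vanishes after integration by parts thanks to $\nabla\cdot u_{\varepsilon}=0$ and the boundary condition $u_{\varepsilon}=0$. The first summand is estimated pointwise by $|Du_{\varepsilon}||\nabla c_{\varepsilon}|^{2q}$, producing the remaining term on the right-hand side.

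The main obstacle is the boundary integral arising from the diffusion step. The natural remedy is Lemma \ref{lemmffffgga630jklhhjj}(i), which converts it into $\tfrac{C_{\partial\Omega}}{2}\int_{\partial\Omega}|\nabla c_{\varepsilon}|^{2q}$, followed by Lemma \ref{lemmffffgga630jklhhjj}(ii) applied to $w=|\nabla c_{\varepsilon}|^{q}$ (or an equivalent trace-interpolation inequality on the smooth convex domain) to dominate this by $\delta\int\bigl|\nabla|\nabla c_{\varepsilon}|^{q}\bigr|^{2}+C(\delta)\int|\nabla c_{\varepsilon}|^{2q}$. Choosing $\delta$ small enough allows us to absorb half of the $-\tfrac{2(q-1)}{q^{2}}\int|\nabla|\nabla c_{\varepsilon}|^{q}|^{2}$ term, leaving precisely the coefficient $\tfrac{q-1}{q^{2}}$ stated in \dref{cz2.5ghhjssuyuiihjj}; similarly, absorbing the $nc$ contribution retains at least $\tfrac{1}{2}\int|\nabla c_{\varepsilon}|^{2q-2}|D^{2}c_{\varepsilon}|^{2}$. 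The remaining lower-order terms $C\int|\nabla c_{\varepsilon}|^{2q}$ may be absorbed into the $|Du_{\varepsilon}|$ contribution on the right by writing $1\leq 1+|Du_{\varepsilon}|$, or equivalently treated as part of the generic constant, yielding the asserted inequality with a constant depending only on $q$ and $\|c_{0}\|_{L^{\infty}(\Omega)}$.
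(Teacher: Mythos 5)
Your computation is the standard one and coincides with the argument the paper invokes (Lemma 3.6 of Tao--Winkler \cite{Tao71215}): test with $|\nabla c_{\varepsilon}|^{2q-2}\nabla c_{\varepsilon}\cdot\nabla(\cdot)$, use $\nabla c_{\varepsilon}\cdot\nabla\Delta c_{\varepsilon}=\tfrac12\Delta|\nabla c_{\varepsilon}|^{2}-|D^{2}c_{\varepsilon}|^{2}$, integrate the $n_{\varepsilon}c_{\varepsilon}$ term by parts and use $c_{\varepsilon}\le\|c_{0}\|_{L^{\infty}}$, and kill the transport term via $\nabla\cdot u_{\varepsilon}=0$. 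All of that is fine and gives the stated coefficients.

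The one place where you deviate is the boundary integral, and there your argument has a genuine flaw in its last step. Since $\Omega$ is assumed convex and $\nabla c_{\varepsilon}\cdot\nu=0$ on $\partial\Omega$, one has $\frac{\partial|\nabla c_{\varepsilon}|^{2}}{\partial\nu}\le 0$ pointwise on $\partial\Omega$, so the boundary term is nonpositive and is simply dropped; this is what the cited proof does and why no extraneous terms appear in \dref{cz2.5ghhjssuyuiihjj}. Your route via the curvature bound of Lemma \ref{lemmffffgga630jklhhjj}(i) and a trace interpolation instead produces a leftover $C(\delta)\int_{\Omega}|\nabla c_{\varepsilon}|^{2q}$, which is \emph{not} on the right-hand side of the asserted inequality, and your two proposed remedies do not remove it: writing $1\le 1+|Du_{\varepsilon}|$ only reproduces the same term ($\int|\nabla c_{\varepsilon}|^{2q}\le\int|\nabla c_{\varepsilon}|^{2q}+\int|Du_{\varepsilon}||\nabla c_{\varepsilon}|^{2q}$ does not bound it by the second summand alone), and it cannot be ``treated as part of the generic constant'' because it is not a constant. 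One could absorb it by a further interpolation against $\int|\nabla c_{\varepsilon}|^{2q-2}|D^{2}c_{\varepsilon}|^{2}$ using $\|c_{\varepsilon}\|_{L^{\infty}}\le\|c_{0}\|_{L^{\infty}}$, but that introduces an additive constant absent from the statement; the clean fix is simply to use convexity as above.
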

%

In the following, we will estimate the integrals on the right-hand sides of \dref{cz2.5ghhjuyuiihjj} and \dref{cz2.5ghhjssuyuiihjj}.
%
%
%
%
%
%
Indeed,
%
we first give  the  following Lemma which plays an important rule in estimating the boundedness of  $|Du_{\varepsilon}{}| |\nabla c_{\varepsilon}{}|^{2{q}}$.
\begin{lemma}(Lemma 3.7 of \cite{Winkler11215})\label{ffglffhhemmaccvbbfggg78630jklhhjj}
 For any $\varepsilon\in (0,1),$ we have
\begin{equation}
 \begin{array}{rl}
 \disp{\frac{d}{dt}\int_{\Omega}|A^{\frac{1}{2}}u_{\varepsilon}|^2+\int_{\Omega}|Au_{\varepsilon}|^2\leq\|\nabla\phi\|_{L^\infty(\Omega)}^2\int_\Omega n^2_{\varepsilon}~~~\mbox{for all}~~t\in (0, T_{max}).}
\end{array}\label{hnjmvvbbbssddaqwswffgddaassffssff3.10deerfgghhjuuloollgghhhyhh}
\end{equation}
\end{lemma}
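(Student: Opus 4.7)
\medskip

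\noindent\textbf{Proof plan for Lemma \ref{ffglffhhemmaccvbbfggg78630jklhhjj}.} The plan is to apply the Helmholtz--Leray projection $\mathcal{P}$ to the Stokes equation in \dref{1.1fghyuisda} in order to eliminate the pressure, and then to test the resulting evolution equation in $L^2_\sigma(\Omega)$ against the natural quantity $Au_\varepsilon$, exploiting standard identities for the (self-adjoint, positive) Stokes operator $A=-\mathcal{P}\Delta$.

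First I would rewrite the fluid equation in its projected form
\begin{equation*}
u_{\varepsilon t}+Au_\varepsilon=\mathcal{P}(n_\varepsilon\nabla\phi),\qquad \nabla\cdot u_\varepsilon=0,\ u_\varepsilon|_{\partial\Omega}=0,
\end{equation*}
which is legitimate by the regularity asserted in Lemma \ref{lemma70} and the smoothness of $\nabla\phi$ from \dref{dd1.1fghyuisdakkkllljjjkk}. Next I would take the $L^2(\Omega)$ inner product of this identity with $Au_\varepsilon$. Using self-adjointness of $A^{1/2}$ and the fact that $u_\varepsilon$ remains in the domain of $A$ for every $t\in(0,T_{max})$, the time term yields
\begin{equation*}
\int_\Omega u_{\varepsilon t}\cdot Au_\varepsilon=\int_\Omega A^{1/2}u_{\varepsilon t}\cdot A^{1/2}u_\varepsilon=\tfrac{1}{2}\tfrac{d}{dt}\int_\Omega|A^{1/2}u_\varepsilon|^2,
\end{equation*}
while the dissipative term gives exactly $\int_\Omega|Au_\varepsilon|^2$.

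For the forcing term on the right I would use that $\mathcal{P}$ is an orthogonal projection on $L^2(\Omega)$, so $\|\mathcal{P}(n_\varepsilon\nabla\phi)\|_{L^2}\le\|n_\varepsilon\nabla\phi\|_{L^2}\le\|\nabla\phi\|_{L^\infty(\Omega)}\|n_\varepsilon\|_{L^2(\Omega)}$. By Cauchy--Schwarz followed by Young's inequality with the optimal split that absorbs exactly half of $\int_\Omega|Au_\varepsilon|^2$ to the left,
\begin{equation*}
\int_\Omega\mathcal{P}(n_\varepsilon\nabla\phi)\cdot Au_\varepsilon\le\tfrac{1}{2}\int_\Omega|Au_\varepsilon|^2+\tfrac{1}{2}\|\nabla\phi\|_{L^\infty(\Omega)}^2\int_\Omega n_\varepsilon^2.
\end{equation*}
Combining the three pieces and multiplying through by $2$ yields the claimed inequality. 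The only potential subtlety is justifying that $u_\varepsilon(\cdot,t)\in D(A)$ pointwise in $t$ and that the time differentiation may be pulled outside the integral; this follows from the classical regularity \dref{1.1ddfghyuisda} from Lemma \ref{lemma70} together with the smoothing of the Stokes semigroup, so no genuine obstacle appears. No Gagliardo--Nirenberg, Gronwall or bootstrap argument is needed at this stage: the estimate is a direct differential identity that will later be coupled with bounds on $\|n_\varepsilon\|_{L^2}$ obtained from Lemma \ref{lemmakkllgg4563025xxhjklojjkkk} and Lemmata \ref{lemmaghjmk4563025xxhjklojjkkk}--\ref{lemmaghjmk45ss63025xxhjklojjkkk}.
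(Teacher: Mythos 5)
Your argument is correct and is precisely the standard proof of this estimate: the paper itself gives no proof but simply cites Lemma 3.7 of \cite{Winkler11215}, whose derivation is exactly your computation (project the Stokes equation, test with $Au_\varepsilon$, and apply Cauchy--Schwarz and Young with the split $\frac12\int_\Omega|Au_\varepsilon|^2+\frac12\|\nabla\phi\|_{L^\infty(\Omega)}^2\int_\Omega n_\varepsilon^2$, then multiply by $2$). One could even skip the bound on $\|\mathcal{P}(n_\varepsilon\nabla\phi)\|_{L^2}$ by noting $\int_\Omega\mathcal{P}(n_\varepsilon\nabla\phi)\cdot Au_\varepsilon=\int_\Omega n_\varepsilon\nabla\phi\cdot Au_\varepsilon$ since $Au_\varepsilon\in L^2_\sigma(\Omega)$, but this is cosmetic.
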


Now, in order to estimate $\int_\Omega n_{\varepsilon}^2{} |\nabla c_{\varepsilon}{}|^{2{q}-2},$  we
recall the  following Lemma which comes from  Lemma 2.7 of \cite{Tao71215} (see also \cite{Zhengddffsdsd6})
%
%
%
\begin{lemma}(Lemma 2.7 of \cite{Tao71215})\label{lemmafcvg4563025xxhjklojjkkkgyhuihhu}
Assume
that
%
the initial data $(n_0,c_0,u_0)$ fulfills \dref{ccvvx1.731426677gg}. Let
$
p_0 \in(0, 9(m-1)),m>1
$
and $T_0 > 0$, then one can find a constant $C(p_0, T) > 0$ such that
\begin{equation}
\begin{array}{rl}
&\disp{\|{n}_\varepsilon(\cdot,t)\|_{L^{p_0}(\Omega)}\leq C(p_0, T)~~~\mbox{for all}~~ t\in (0, T)  }\\
\end{array}
\label{cz2.5ffvvbbghhjuyuiihjjrftg}
\end{equation}
holds with \begin{equation}
T :=\min\{T_0,T_{max}\}.
\label{hjnmkcz2.5ffvvbbghhjuyuiihjjrftg}
\end{equation}
\end{lemma}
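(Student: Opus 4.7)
The plan is to take $p=p_0$ in the energy identity of Lemma \ref{lemmaghjmk4563025xxhjklojjkkk},
$$\frac{1}{p_0}\frac{d}{dt}\|n_\varepsilon\|_{L^{p_0}(\Omega)}^{p_0}+\frac{C_D(p_0-1)}{2}\int_\Omega n_\varepsilon^{m+p_0-3}|\nabla n_\varepsilon|^2\le C\int_\Omega n_\varepsilon^{p_0+1-m}|\nabla c_\varepsilon|^2,$$
and to absorb the chemotactic right-hand side into the diffusive dissipation on the left, paying a residue that is integrable in time by virtue of the bound $\int_0^T\int_\Omega|\nabla c_\varepsilon|^4\le C$ supplied by \dref{vvcz2.5ghhjuyuiihjj}. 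This reduces the whole argument to a Gronwall-type step.

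First I would apply H\"older's inequality in the form
$$\int_\Omega n_\varepsilon^{p_0+1-m}|\nabla c_\varepsilon|^2 \le \|n_\varepsilon\|_{L^{2(p_0+1-m)}(\Omega)}^{p_0+1-m}\,\|\nabla c_\varepsilon\|_{L^4(\Omega)}^2,$$
after which, setting $v_\varepsilon:=n_\varepsilon^{(m+p_0-1)/2}$, the dissipation takes the form $\frac{4}{(m+p_0-1)^2}\|\nabla v_\varepsilon\|_{L^2(\Omega)}^2$, while the $n_\varepsilon$-factor on the right becomes $\|v_\varepsilon\|_{L^\alpha(\Omega)}^\beta$ with $\alpha:=4(p_0+1-m)/(m+p_0-1)$ and $\beta:=2(p_0+1-m)/(m+p_0-1)$. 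The Gagliardo--Nirenberg inequality in $\mathbb{R}^3$, applied together with the $\varepsilon$-independent $L^1$-bound $\|n_\varepsilon\|_{L^1(\Omega)}=\|n_0\|_{L^1(\Omega)}$ from \dref{vgbhssddaqwswddaassffssff3.10deerfgghhjuuloollgghhhyhh} (which controls $\|v_\varepsilon\|_{L^{2/(m+p_0-1)}(\Omega)}$), then yields an estimate of the shape
$$\|v_\varepsilon\|_{L^\alpha(\Omega)}^\beta \le C\bigl(\|\nabla v_\varepsilon\|_{L^2(\Omega)}^{\theta}+1\bigr)$$
with an interpolation exponent $\theta=\theta(m,p_0)$ computed from scaling.

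The main obstacle is the algebraic bookkeeping of $\theta$: the condition $p_0<9(m-1)$ should be exactly what forces $\theta<2$, so that Young's inequality permits the splitting
$$\|\nabla v_\varepsilon\|_{L^2(\Omega)}^{\theta}\|\nabla c_\varepsilon\|_{L^4(\Omega)}^2 \le \eta\|\nabla v_\varepsilon\|_{L^2(\Omega)}^2 + C_\eta\|\nabla c_\varepsilon\|_{L^4(\Omega)}^{q}$$
with a residual exponent $q\le 4$; here $\eta>0$ is picked small enough that the first term is dominated by $\frac{C_D(p_0-1)}{2}\int_\Omega n_\varepsilon^{m+p_0-3}|\nabla n_\varepsilon|^2$. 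I will need to verify this balance by a direct computation, tracking how the Gagliardo--Nirenberg scaling identity $\frac{1}{\alpha}=\theta(\frac{1}{2}-\frac{1}{3})+(1-\theta)\frac{m+p_0-1}{2}$ and the bound $q=4/(2-\theta)\le 4$ together translate into the critical threshold $p_0<9(m-1)$.

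Once this absorption is achieved, the resulting differential inequality
$$\frac{d}{dt}\|n_\varepsilon\|_{L^{p_0}(\Omega)}^{p_0} \le C\bigl(1+\|\nabla c_\varepsilon\|_{L^4(\Omega)}^{q}\bigr) \quad\text{on } (0,T)$$
can be integrated from $0$ to $t\le T$; the right-hand side is integrable by \dref{vvcz2.5ghhjuyuiihjj} and $q\le 4$, and this produces the claimed bound $\|n_\varepsilon(\cdot,t)\|_{L^{p_0}(\Omega)}\le C(p_0,T)$. All constants depend only on $m$, $p_0$, $T$, $\|n_0\|_{L^1(\Omega)}$, $\|c_0\|_{L^\infty(\Omega)}$, $\|\nabla\phi\|_{L^\infty(\Omega)}$ and the space-time $L^4$-bound on $\nabla c_\varepsilon$, all of which are independent of $\varepsilon\in(0,1)$; this $\varepsilon$-uniformity is precisely what is needed for the subsequent compactness arguments used to pass to the limit $\varepsilon\to 0$.
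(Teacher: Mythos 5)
The paper never proves this lemma at all --- it simply imports it as Lemma~2.7 of \cite{Tao71215} --- so your proposal is an attempt at an actual proof rather than a reconstruction. Your architecture (test with $n_\varepsilon^{p_0-1}$, split off $\|\nabla c_\varepsilon\|_{L^4(\Omega)}^2$ by H\"{o}lder, interpolate, absorb into the dissipation, integrate in time using \dref{vvcz2.5ghhjuyuiihjj}) is reasonable, but the computation you defer is exactly where it breaks. With $v_\varepsilon=n_\varepsilon^{(m+p_0-1)/2}$, interpolating $\|v_\varepsilon\|_{L^\alpha(\Omega)}$ between $\|\nabla v_\varepsilon\|_{L^2(\Omega)}$ and the mass-controlled norm $\|v_\varepsilon\|_{L^{2/(m+p_0-1)}(\Omega)}$ gives $\|v_\varepsilon\|_{L^\alpha(\Omega)}^\beta\le C(\|\nabla v_\varepsilon\|_{L^2(\Omega)}^{\theta}+1)$ with $\theta=\mu\beta=\frac{6p_0-6m+3}{3p_0+3m-4}$; note that the correct exponent is the Gagliardo--Nirenberg parameter $\mu$ \emph{multiplied by} $\beta$, a distinction your scaling identity elides. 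Your Young step then leaves the residue $\|\nabla c_\varepsilon\|_{L^4(\Omega)}^{4/(2-\theta)}$, and integrability via \dref{vvcz2.5ghhjuyuiihjj} requires $4/(2-\theta)\le 4$, i.e.\ $\theta\le 1$, which is equivalent to $p_0\le 3m-\frac{7}{3}$. Since $3m-\frac{7}{3}<9(m-1)$ for every $m>\frac{10}{9}$, your scheme covers only a strictly smaller range of $p_0$ than claimed; in particular the condition $p_0<9(m-1)$ is \emph{not} what forces $\theta<2$ (one checks $\theta<2$ holds for all $m>\frac{11}{12}$ irrespective of $p_0$), so the ``balance'' you promise to verify does not hold.

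The missing idea is to interpolate against the quantity being propagated rather than against the conserved mass: estimate $\|v_\varepsilon\|_{L^\alpha(\Omega)}^\beta$ between $\|\nabla v_\varepsilon\|_{L^2(\Omega)}$ and $\|v_\varepsilon\|_{L^{2p_0/(m+p_0-1)}(\Omega)}=\big(\int_\Omega n_\varepsilon^{p_0}\big)^{(m+p_0-1)/(2p_0)}$. The relevant exponent then becomes $\theta'=\frac{3p_0+6-6m}{2p_0+3m-3}$, and $\theta'\le 1$ is \emph{exactly} $p_0\le 9(m-1)$. After absorbing $\eta\|\nabla v_\varepsilon\|_{L^2(\Omega)}^2$ into the dissipation one is left with a differential inequality of the form $y'(t)\le C\big(1+\|\nabla c_\varepsilon(\cdot,t)\|_{L^4(\Omega)}^4\big)\big(1+y(t)\big)$ for $y(t)=\int_\Omega n_\varepsilon^{p_0}(\cdot,t)$, which closes by Gronwall together with \dref{vvcz2.5ghhjuyuiihjj}; this is essentially the Tao--Winkler argument. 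Two smaller points: the case $p_0\le 1$ is trivial from \dref{vgbhssddaqwswddaassffssff3.10deerfgghhjuuloollgghhhyhh} and H\"{o}lder on the bounded domain, and \dref{vvcz2.5ghhjuyuiihjj} is established in this paper only for $\frac{10}{9}<m\le 2$, so both your argument and the repaired one prove the lemma only in that range rather than for all $m>1$ as stated (harmless for the application, where $m>\frac{9}{8}$).
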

\begin{lemma}\label{lemma4563025xxhjklojjkkkgyhuihhu}
Assuming that
$p_0\in [1,9(m-1)),m>1 $
and
${q}>1$.
If
\begin{equation}
\max\{1,\frac{p_0}{3}+1-m,m-1+p_0\frac{ q}{ q+1 }\}< p <
[2(m-1)+\frac{2p_0}{3}]q+m-1,
\label{derfrscz2.5ghhjuyuiihjjrftg}
\end{equation}
then for all
small $\delta>0$,
we can find a constant $C:= C(p,{q},\delta) > 0$ such that
\begin{equation}
\begin{array}{rl}
&\disp{\int_\Omega n^{p+1-m}_{\varepsilon} |\nabla {c}_{\varepsilon}|^2 \leq\delta\int_{\Omega} |n_{\varepsilon}^{\frac{p+m-1}{2}}|^2+\delta\||\nabla{c}_{\varepsilon}|^{{q}-1}D^2{c}_{\varepsilon}\|_{L^2(\Omega)}^2+C  }\\
\end{array}
\label{cz2.5ghhjuyuiihddfffjjrftg}
\end{equation}
for all $t\in(0,T)$, where $T$ is given by \dref{hjnmkcz2.5ffvvbbghhjuyuiihjjrftg}.
\end{lemma}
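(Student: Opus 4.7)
The plan is to bound the cross term $\int_{\Omega} n_\varepsilon^{p+1-m} |\nabla c_\varepsilon|^2$ by a Hölder--Gagliardo-Nirenberg--Young chain that uncouples the $n_\varepsilon$ and $\nabla c_\varepsilon$ factors, interpolates each separately between a ``strong'' norm we want to retain on the right-hand side and a ``weak'' norm already controlled $\varepsilon$-uniformly, and finally collapses the resulting mixed power with Young's inequality. (I read the right-hand side of \dref{cz2.5ghhjuyuiihddfffjjrftg} as containing $|\nabla n_\varepsilon^{(p+m-1)/2}|^2$, since the literal expression $|n_\varepsilon^{(p+m-1)/2}|^2$ would not correspond to a quantity available for absorption into the diffusion term of \dref{cz2.5ghhjuyuiihjj}.)

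First I would apply Hölder with conjugate exponents $a,a'\in(1,\infty)$:
$$\int_\Omega n_\varepsilon^{p+1-m}|\nabla c_\varepsilon|^2 \leq \Bigl(\int_\Omega n_\varepsilon^{(p+1-m)a}\Bigr)^{1/a}\Bigl(\int_\Omega |\nabla c_\varepsilon|^{2a'}\Bigr)^{1/a'},$$
with $a'$ calibrated so that $\lambda:=2a'\in[2q+2,4q+1]$, which is the admissible range in Lemma \ref{drfe116lemma70hhjj}. Setting $s:=(p+m-1)/2$ and rewriting $n_\varepsilon^{(p+1-m)a} = (n_\varepsilon^s)^{(p+1-m)a/s}$, I would use the Gagliardo-Nirenberg inequality in $\mathbb{R}^3$ to interpolate the $L^{(p+1-m)a/s}$-norm of $n_\varepsilon^s$ between $\|\nabla n_\varepsilon^s\|_{L^2(\Omega)}$ and $\|n_\varepsilon^s\|_{L^{p_0/s}(\Omega)}$; by Lemma \ref{lemmafcvg4563025xxhjklojjkkkgyhuihhu} (valid since $p_0\in[1,9(m-1))$) the latter is uniformly bounded in $\varepsilon$ on $(0,T)$. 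In parallel I would apply Lemma \ref{drfe116lemma70hhjj} to $\|\nabla c_\varepsilon\|_{L^{2a'}(\Omega)}$, using the $L^\infty$ bound on $c_\varepsilon$ from Lemma \ref{ghjssdeedrfe116lemma70hhjj} to dispose of the non-gradient factor.

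Multiplying the two interpolations produces an estimate of the schematic form
$$\int_\Omega n_\varepsilon^{p+1-m}|\nabla c_\varepsilon|^2 \leq C\bigl(\|\nabla n_\varepsilon^{s}\|_{L^2(\Omega)}^{\alpha}+1\bigr)\bigl(\||\nabla c_\varepsilon|^{q-1}D^2c_\varepsilon\|_{L^2(\Omega)}^{\beta}+1\bigr)$$
for explicit exponents $\alpha=\alpha(p,p_0,m,a)$ and $\beta=\beta(q,a)$ obtained by tracking the scaling powers in Gagliardo-Nirenberg and in Lemma \ref{drfe116lemma70hhjj}. A final Young's inequality then yields the required $\delta$-absorbable form exactly when $\alpha+\beta\leq 2$, strict inequality producing the residual constant $C$.

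The main obstacle is a linear feasibility check: one must verify that the free Hölder parameter $a'\in[q+1,2q+\tfrac12]$ can actually be chosen so that $\alpha+\beta<2$, while simultaneously keeping the Gagliardo-Nirenberg interpolation exponent $\theta$ and the exponent $\mu=\tfrac{2(2a'-3)}{(2q-1)\cdot 2a'}$ of Lemma \ref{drfe116lemma70hhjj} inside $(0,1)$. Translating these constraints back into conditions on $p$ yields exactly the three-sided window in \dref{derfrscz2.5ghhjuyuiihjjrftg}: the lower bound $p>m-1+\tfrac{p_0q}{q+1}$ and the upper bound $p<[2(m-1)+\tfrac{2p_0}{3}]q+m-1$ arise from forcing $\alpha+\beta<2$ at the two extreme choices of $a'$, the constraint $p>\tfrac{p_0}{3}+1-m$ ensures that the Gagliardo-Nirenberg interpolation proceeds in the correct ``enlarging'' direction from $L^{p_0/s}$, and $p>1$ guarantees that the power of $n_\varepsilon$ on the left is meaningful. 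Once this bookkeeping is laid out the proof reduces to elementary algebra, and no analytic input beyond Lemmas \ref{ghjssdeedrfe116lemma70hhjj}, \ref{drfe116lemma70hhjj} and \ref{lemmafcvg4563025xxhjklojjkkkgyhuihhu} should be needed.
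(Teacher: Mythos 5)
Your proposal follows essentially the same route as the paper: Hölder with exponents $\frac{q+1}{q}$ and $q+1$ (the paper simply fixes your free parameter at $a'=q+1$, so $\lambda=2q+2$), Gagliardo--Nirenberg interpolation of $n_\varepsilon^{(p+m-1)/2}$ between $\|\nabla n_\varepsilon^{(p+m-1)/2}\|_{L^2}$ and the $L^{2p_0/(m+p-1)}$-norm controlled by Lemma \ref{lemmafcvg4563025xxhjklojjkkkgyhuihhu}, Lemma \ref{drfe116lemma70hhjj} for $\|\nabla c_\varepsilon\|_{L^{2(q+1)}}$, and Young's inequality, with the upper bound in \dref{derfrscz2.5ghhjuyuiihjjrftg} being exactly the condition that the combined exponent stays below $2$. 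You also correctly identified that the right-hand side of \dref{cz2.5ghhjuyuiihddfffjjrftg} should read $\delta\int_\Omega|\nabla n_\varepsilon^{\frac{p+m-1}{2}}|^2$; the only minor inaccuracy is the bookkeeping of which hypothesis enforces which constraint (the lower bound $p>m-1+\frac{p_0 q}{q+1}$ comes from keeping the interpolation exponent nonnegative, and $p>\frac{p_0}{3}+1-m$ from keeping the Gagliardo--Nirenberg denominator positive, rather than from the Young condition at extreme $a'$).
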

\begin{proof}
We apply the H\"{o}lder inequality with exponents $q+1$
and $\frac{q+1}{q}$ to obtain
\begin{equation}
\begin{array}{rl}
J_1&:=\disp{ \int_\Omega n^{p+1-m}_{\varepsilon} |\nabla {c}_{\varepsilon}|^2}\\
&\leq\disp{ \left(\int_\Omega n^{ \frac{q+1}{q} (p+1-m)}_{\varepsilon}\right)^{\frac{1}{ \frac{q+1}{q} }}\left(\int_\Omega |\nabla {c}_{\varepsilon}|^{2(q+1)}\right)^{\frac{1}{\kappa}}}\\
&=\disp{ \|   n^{\frac{m+p-1}{2}}_{\varepsilon}\|^{\frac{2(p+1-m)}{m+p-1}}_{L^{\frac{2 \frac{q+1}{q} (p+1-m)}{m+p-1}}(\Omega)}
 \|\nabla {c}_{\varepsilon}\|_{L^{2(q+1)}(\Omega)}^2.}\\
\end{array}
\label{cz2.57151hhkkhhhjukildrfthjjhhhhh}
\end{equation}
%
Due to $p>m-1+p_0\frac{q}{q+1},m>1 $ and $q>1$, 
we have
$$\frac{p_0}{m+p-1}\leq\frac{ \frac{q+1}{q} (p+1-m) }{m+p-1}\leq3,$$
which together with Lemma \ref{lemmafcvg4563025xxhjklojjkkkgyhuihhu} and the Gagliardo--Nirenberg inequality (see e.g. \cite{Zheng00}) implies that there exist positive constants $C_1$ and $C_2$ such that
\begin{equation}
\begin{array}{rl}
&\disp{\|   n^{\frac{m+p-1}{2}}_{\varepsilon}\|
^{\frac{2(p+1-m)}{m+p-1}}_{L^{\frac{2 \frac{q+1}{q} (p+1-m)}{m+p-1}}(\Omega)}}
\\
\leq&\disp{C_1(\|n_{\varepsilon}^{\frac{p+m-1}{2}}\|_{L^2(\Omega)}^{\mu_1}\|   n^{\frac{m+p-1}{2}}_{\varepsilon}\|_{L^\frac{2p_0}{m+p-1}(\Omega)}^{1-\mu_1}+\|  n^{\frac{m+p-1}{2}}_{\varepsilon}\|_{L^\frac{2p_0}{m+p-1}(\Omega)})^{\frac{2(p+1-m)}{m+p-1}}}\\
\leq&\disp{C_{2}(\|n_{\varepsilon}^{\frac{p+m-1}{2}}\|_{L^2(\Omega)}^{\frac{2(p+1-m)\mu_1}{m+p-1}}+1)}\\
=&\disp{C_{2}(\|n_{\varepsilon}^{\frac{p+m-1}{2}}\|_{L^2(\Omega)}^{\frac{3(\frac{p+1-m}{p_0}-\frac{q}{q+1})}{-\frac{1}{2}+\frac{3({m+p-1})}{2p_0}}}+1),}\\
\end{array}
\label{cz2.563022222ikopl2sdfg44}
\end{equation}
where 
$$\mu_1=\frac{\frac{3({m+p-1})}{2p_0}-\frac{3({m+p-1})}{2 \frac{q+1}{q} (p+1-m)}}{-\frac{1}{2}+\frac{3({m+p-1})}{2p_0}}=
({m+p-1})\frac{\frac{3}{2p_0}-\frac{3}{2 \frac{q+1}{q} (p+1-m)}}{-\frac{1}{2}+\frac{3({m+p-1})}{2p_0}}\in(0,1).$$
On the other hand, with the help of Lemma \ref{ghjssdeedrfe116lemma70hhjj} and Lemma \ref{drfe116lemma70hhjj}, we conclude that 
there exist some positive constants $C_3$ and $C_{4}$ such that
\begin{equation}
\begin{array}{rl}
 \|\nabla {c}_{\varepsilon}\|_{L^{2(q+1)}(\Omega)}^2\leq&\disp{C_3\||\nabla{c}_{\varepsilon}|^{q-1}D^2{c}_{\varepsilon}\|_{L^2(\Omega)}^{\frac{2}{q+1}}
 \|{c}_{\varepsilon}\|_{L^\infty(\Omega)}^{\frac{2(q-1)}{(2q-1)(q+1)}}+C_3\|{c}_{\varepsilon}\|_{L^\infty(\Omega)}^2}
\\
\leq&\disp{C_4(\||\nabla{c}_{\varepsilon}|^{q-1}D^2{c}_{\varepsilon}\|_{L^2(\Omega)}^{\frac{2}{q+1}}+1).}\\
\end{array}
\label{cz2.563022222ikopl255}
\end{equation}

Inserting \dref{cz2.563022222ikopl2sdfg44}--\dref{cz2.563022222ikopl255} into \dref{cz2.57151hhkkhhhjukildrfthjjhhhhh} and using the Young inequality and \dref{derfrscz2.5ghhjuyuiihjjrftg}, we derive that there exist positive constants $C_5$ and $C_6$ such that for every $\delta>0,$
\begin{equation}
\begin{array}{rl}
J_1=&\disp{C_{5}(\|n_{\varepsilon}^{\frac{p+m-1}{2}}\|_{L^2(\Omega)}^{\frac{3[{\frac{p+1-m}{p_0} -\frac{1}{ \frac{q+1}{q} }}]}{-\frac{1}{2}+\frac{3(m+{p}-1)}{2p_0}}}+1)
(\||\nabla{c}_{\varepsilon}|^{q-1}D^2{c}_{\varepsilon}\|_{L^2(\Omega)}^{\frac{2}{q+1}}+1)}\\
\leq&\disp{\delta\int_{\Omega} |n_{\varepsilon}^{\frac{p+m-1}{2}}|^2+\delta\||\nabla{c}_{\varepsilon}|^{q-1}D^2{c}_{\varepsilon}\|_{L^2(\Omega)}^2
+C_6~~\mbox{for all}~~ t\in(0,T).  }\\
\end{array}
\label{cz2.57151hhkkhhhjukildrftdfrtgyhu}
\end{equation}
Here we have use the fact that
$$0<{\frac{3[{\frac{p+1-m}{p_0} -\frac{1}{ \frac{q+1}{q} }}]}{-\frac{1}{2}+\frac{3(m+{p}-1)}{2p_0}}}+{\frac{2}{q+1}}<2$$
and
$${\frac{3[{\frac{p+1-m}{p_0} -\frac{1}{ \frac{q+1}{q} }}]}{-\frac{1}{2}\frac{3(m+{p}-1)}{2p_0}}}>0,{\frac{2}{q+1}}>0.$$
\end{proof}

\begin{lemma}\label{lemma4563025xxhjklojjkkkgyhuiyuiko}
Assuming that
${q}>\max\{1,p_0-1\}$
If
\begin{equation}
p > \max\{1,1-m+\frac{q+1}{3},
q+2-\frac{2p_0}{3}-m\},\label{ghnjjcz2.5ghhjuyuiihjjrftg}
\end{equation}
then for all
small $\delta>0$,
we can find a constant $C:= C(p,q,p_0,\delta) > 0$ such that
\begin{equation}
\begin{array}{rl}
&\disp{\int_\Omega n^2_{\varepsilon} |\nabla {c}_{\varepsilon}|^{2{q}-2}+\int_\Omega n^2_{\varepsilon} \leq\delta\int_{\Omega} |n_{\varepsilon}^{\frac{p+m-1}{2}}|^2+\delta\||\nabla{c}_{\varepsilon}|^{q-1}D^2{c}_{\varepsilon}\|_{L^2(\Omega)}^2+C  }\\
\end{array}
\label{cz2.5ghhjuyuiihjjhhhhrftg}
\end{equation}
for all $t\in(0,T)$, where $T$ is given by \dref{hjnmkcz2.5ffvvbbghhjuyuiihjjrftg}. 
\end{lemma}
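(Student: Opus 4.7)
The plan is to mirror the proof of Lemma \ref{lemma4563025xxhjklojjkkkgyhuihhu} as closely as possible, since the structure is very similar: split by H\"older, interpolate with Gagliardo--Nirenberg against the priori $L^{p_0}$ bound from Lemma \ref{lemmafcvg4563025xxhjklojjkkkgyhuihhu}, invoke Lemma \ref{drfe116lemma70hhjj} for the gradient of $c_\varepsilon$, and finally apply Young's inequality. First, since $q>1$, I would apply H\"older with the conjugate pair $(\frac{q+1}{2},\frac{q+1}{q-1})$ to decouple the two factors:
$$\int_\Omega n_\varepsilon^{2}|\nabla c_\varepsilon|^{2q-2}\le \|n_\varepsilon\|_{L^{q+1}(\Omega)}^{2}\,\|\nabla c_\varepsilon\|_{L^{2(q+1)}(\Omega)}^{2(q-1)}.$$
The exponent $2(q+1)$ is chosen precisely so that Lemma \ref{drfe116lemma70hhjj} applies with $\lambda=2(q+1)\in[2q+2,4q+1]$; combined with the uniform bound $\|c_\varepsilon\|_{L^\infty(\Omega)}\le\|c_0\|_{L^\infty(\Omega)}$ from Lemma \ref{ghjssdeedrfe116lemma70hhjj}, this yields
$$\|\nabla c_\varepsilon\|_{L^{2(q+1)}(\Omega)}^{2(q-1)}\le C\bigl(\||\nabla c_\varepsilon|^{q-1}D^{2}c_\varepsilon\|_{L^{2}(\Omega)}^{2(q-1)/(q+1)}+1\bigr).$$

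For the $n_\varepsilon$--factor I rewrite $\|n_\varepsilon\|_{L^{q+1}(\Omega)}^{2}=\|n_\varepsilon^{(m+p-1)/2}\|_{L^{2(q+1)/(m+p-1)}(\Omega)}^{4/(m+p-1)}$ and interpolate via Gagliardo--Nirenberg between $L^{2p_0/(m+p-1)}$ (bounded by Lemma \ref{lemmafcvg4563025xxhjklojjkkkgyhuihhu}) and the $H^1$--seminorm. The resulting interpolation parameter is
$$\mu=\frac{3(m+p-1)(q+1-p_0)}{(q+1)\bigl(3(m+p-1)-p_0\bigr)},$$
and the admissibility conditions $\mu\in(0,1)$ reduce respectively to $q>p_0-1$ (which is assumed) and $p>1-m+\frac{q+1}{3}$ (which is the second condition in \dref{ghnjjcz2.5ghhjuyuiihjjrftg}). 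This produces
$$\|n_\varepsilon\|_{L^{q+1}(\Omega)}^{2}\le C\bigl(\|\nabla n_\varepsilon^{(m+p-1)/2}\|_{L^{2}(\Omega)}^{4\mu/(m+p-1)}+1\bigr).$$

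Multiplying the two bounds and applying Young's inequality turns the product into the desired form $\delta\int_\Omega (n_\varepsilon^{(p+m-1)/2})^{2}+\delta\||\nabla c_\varepsilon|^{q-1}D^{2}c_\varepsilon\|_{L^{2}(\Omega)}^{2}+C$. The key obstacle is verifying that Young's inequality is admissible, i.e.\ that
$$\frac{4\mu/(m+p-1)}{2}+\frac{2(q-1)/(q+1)}{2}\le 1;$$
a direct computation reduces this to $3(q+1-p_0)\le 3(m+p-1)-p_0$, equivalently $p\ge q+2-\frac{2p_0}{3}-m$, which is the third condition in \dref{ghnjjcz2.5ghhjuyuiihjjrftg}. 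So the assumptions on $p$ line up exactly with what the interpolation and Young steps demand, and this triple matching is really the heart of the argument.

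Finally, the extra term $\int_\Omega n_\varepsilon^{2}$ is treated by the same recipe but without the $\nabla c_\varepsilon$ factor: write $\int_\Omega n_\varepsilon^{2}=\|n_\varepsilon^{(m+p-1)/2}\|_{L^{4/(m+p-1)}(\Omega)}^{4/(m+p-1)}$, apply Gagliardo--Nirenberg between $L^{2p_0/(m+p-1)}$ and $H^{1}$ together with Lemma \ref{lemmafcvg4563025xxhjklojjkkkgyhuihhu}, and absorb the resulting power of $\|\nabla n_\varepsilon^{(m+p-1)/2}\|_{L^{2}(\Omega)}$ via Young's inequality; the hypothesis $p>1$ guarantees that the relevant interpolation exponent is strictly less than $1$ and that the Young exponent on $\|\nabla n_\varepsilon^{(m+p-1)/2}\|_{L^{2}(\Omega)}$ is strictly less than $2$. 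Summing the two estimates yields \dref{cz2.5ghhjuyuiihjjhhhhrftg}.
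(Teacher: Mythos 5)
Your proposal follows essentially the same route as the paper: the same H\"older splitting with exponents $\frac{q+1}{2}$ and $\frac{q+1}{q-1}$, the same Gagliardo--Nirenberg interpolation of $\|n_\varepsilon^{(m+p-1)/2}\|$ against the $L^{p_0}$ bound (your $\mu$ equals the paper's $\mu_3$ after simplification), the same use of Lemma \ref{drfe116lemma70hhjj} with $\lambda=2(q+1)$, and the same Young-exponent check reducing to the three conditions in \dref{ghnjjcz2.5ghhjuyuiihjjrftg}. The only quibble is in the final $\int_\Omega n_\varepsilon^2$ term, where the admissible Young exponent actually needs $p>3-m-\frac{2p_0}{3}$ (when $p_0<2$), which follows from $p>q+2-\frac{2p_0}{3}-m$ together with $q>1$ rather than from $p>1$ alone; this does not affect correctness under the stated hypotheses.
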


\begin{proof}
Firstly,  in light  of H\"{o}lder inequality with exponents $\frac{q+1}{{q}-1}$
and  $\frac{q+1}{2}$, we obtain
%
\begin{equation}
\begin{array}{rl}
J_2&:=\disp{ \int_\Omega  n^2_{\varepsilon} |\nabla {c}_{\varepsilon}|^{2{q}-2}}\\
&\leq\disp{ \left(\int_\Omega n^{q+1}_{\varepsilon}\right)^{\frac{2}{q+1}}\left(\int_\Omega |\nabla {c}_{\varepsilon}|^{2(q+1)}\right)^{\frac{1}{\frac{q+1}{q-1}}}}\\
&=\disp{ \|   n^{\frac{m+p-1}{2}}_{\varepsilon}\|^{\frac{4}{m+p-1}}_{L^{\frac{2(q+1)}{m+p-1}}(\Omega)}
 \|\nabla {c}_{\varepsilon}\|_{L^{2(q+1)}(\Omega)}^{(2{q}-2)}.}\\
\end{array}
\label{cz2.57151hhkkhhhjukildrfttyuijkoghyu66}
\end{equation}

On the other hand, in view of  $p>1-m+\frac{q+1}{3}$ and Lemma \ref{lemmafcvg4563025xxhjklojjkkkgyhuihhu} and  the Gagliardo--Nirenberg inequality we conclude that there exist positive constants $C_{1}$ and $C_{2}$ such that
\begin{equation}
\begin{array}{rl}
&\disp{\|   n^{\frac{m+p-1}{2}}_{\varepsilon}\|^{\frac{4}{m+p-1}}_{L^{\frac{2(q+1)}{m+p-1}}(\Omega)}}
\\
\leq&\disp{C_{1}(\|\nabla n_{\varepsilon}^{\frac{p+m-1}{2}}\|_{L^2(\Omega)}^{\mu_3}\|   n^{\frac{m+p-1}{2}}_{\varepsilon}\|_{L^\frac{2p_0}{m+p-1}(\Omega)}^{(1-\mu_3)}+\|   n^{\frac{m+p-1}{2}}_{\varepsilon}\|_{L^\frac{2p_0}{m+p-1}(\Omega)})^{\frac{4}{m+p-1}}}\\
\leq&\disp{C_{2}(\|\nabla n_{\varepsilon}^{\frac{p+m-1}{2}}\|_{L^2(\Omega)}^{\frac{4\mu_3}{m+p-1}}+1).}\\
\end{array}
\label{cz2.563022222ikopl2sdfgggjjkkk66}
\end{equation}
Here
$$\mu_3=\frac{\frac{3{(m+p-1)}}{2p_0}-\frac{3{(m+p-1)}}{2(q+1)}}{-\frac{1}{2}+\frac{3{(m+p-1)}}{2p_0}}=
{(m+p-1)}\frac{\frac{3}{2p_0}-\frac{3}{2(q+1)}}{-\frac{1}{2}+\frac{3{(m+p-1)}}{2p_0}}\in(0,1).$$
On the other hand,  according to
%
%
 Lemma \ref{ghjssdeedrfe116lemma70hhjj} and the Gagliardo--Nirenberg inequality, we can find
some positive constants $C_{3}$ and $C_{4}$ such that
\begin{equation}
\begin{array}{rl}
 \|\nabla {c}_{\varepsilon}\|_{L^{2(q+1)}(\Omega)}^{(2{q}-2)}\leq&\disp{C_{3}\||\nabla{c}_{\varepsilon}|^{q-1}D^2{c}_{\varepsilon}\|_{L^2(\Omega)}
 ^{{\frac{2(q-1)}{q+1}}}
 \|{c}_{\varepsilon}\|_{L^\infty(\Omega)}^{\frac{2(q-1)}{q+1}}+C_{3}\|{c}_{\varepsilon}\|_{L^\infty(\Omega)}^{2q-2}}
\\
\leq&\disp{C_{4}(\||\nabla{c}_{\varepsilon}|^{q-1}D^2{c}_{\varepsilon}\|_{L^2(\Omega)}^{{\frac{2(q-1)}{q+1}}}+1).}\\
\end{array}
\label{czsdefr2.563022222ikopl255}
\end{equation}
Now, observing that
$$0<{\frac{3(\frac{2}{p_0}-\frac{2}{q+1})}{-\frac{1}{2}+\frac{3{(m+p-1)}}{2p_0}}}+{{\frac{2(q-1)}{q+1}}}<2$$
and
$${\frac{3(\frac{2}{p_0}-\frac{2}{q+1})}{-\frac{1}{2}+\frac{3{(m+p-1)}}{2p_0}}}>0,{{\frac{2(q-1)}{q+1}}}>0,$$
hence, inserting \dref{cz2.563022222ikopl2sdfgggjjkkk66}--\dref{czsdefr2.563022222ikopl255} into \dref{cz2.57151hhkkhhhjukildrfttyuijkoghyu66} and using \dref{ghnjjcz2.5ghhjuyuiihjjrftg} and the Gagliardo--Nirenberg inequality, we derive that for any $\delta>0,$
\begin{equation}
\begin{array}{rl}
J_2\leq&\disp{C_{5}(\|\nabla n_{\varepsilon}^{\frac{p+m-1}{2}}\|_{L^2(\Omega)}^{
\frac{3(\frac{2}{p_0}-\frac{2}{q+1})}{-\frac{1}{2}+\frac{3{(m+p-1)}}{2p_0}}}+1)
(\||\nabla{c}_{\varepsilon}|^{q-1}D^2{c}_{\varepsilon}\|_{L^2(\Omega)}^{{\frac{2(q-1)}{q+1}}}+1)}\\
\leq&\disp{\delta\int_{\Omega} |\nabla n_{\varepsilon}^{\frac{p+m-1}{2}}|^2+\delta\||\nabla{c}_{\varepsilon}|^{q-1}D^2{c}_{\varepsilon}\|_{L^2(\Omega)}^2
+C_{6}~~\mbox{for all}~~ t\in(0,T).  }\\
\end{array}
\label{cz2.57151hhkkhhhjukildrftdfrtgyhugg}
\end{equation}
with some positive  constants $C_5$ and $C_6.$

Since the integral $\int_\Omega n^2_{\varepsilon}$ can be estimated similarly upon a straightforward simplification of the above
argument, this establishes \dref{cz2.5ghhjuyuiihjjhhhhrftg}.
\end{proof}

\begin{lemma}\label{lemma4563025xxhjklojjkkkgyhuirrtt}
For any $1\leq l<\frac{3}{2}$,
if
$$1<q < \frac{2l+3}{3},$$
then for all
small $\delta>0$, the solution of \dref{1.1fghyuisda} from Lemma \ref{lemma70} satisfies
\begin{equation}
\begin{array}{rl}
&\disp{\int_\Omega |Du{}_{\varepsilon}||\nabla c_{\varepsilon}{}|^{2{q}}\leq\delta\int_{\Omega} |Au_{\varepsilon}|^2+
\delta\|\nabla |\nabla c_{\varepsilon}|^{q}\|_{L^2(\Omega)}^2+C ~~ \mbox{for all}~~ t\in(0, T), }\\
\end{array}
\label{cz2.5ghhjuyuiihjjrftg}
\end{equation}
where a positive  constant $C$ depends on $p$, 
${q}$ and $\delta$.
\end{lemma}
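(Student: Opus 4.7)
The plan is to apply Hölder's inequality to split the integrand into a factor involving $Du_\varepsilon$ and a factor involving $|\nabla c_\varepsilon|^q$, and then interpolate each factor separately so that the $Du_\varepsilon$ factor is controlled by $\|Au_\varepsilon\|_{L^2}$ and the $|\nabla c_\varepsilon|^q$ factor by $\|\nabla |\nabla c_\varepsilon|^q\|_{L^2}$. The final step is a Young-type absorption.

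First I would fix conjugate exponents $\alpha,\alpha'$ with $1/\alpha+1/\alpha'=1$ and write
\[
\int_\Omega |Du_\varepsilon|\,|\nabla c_\varepsilon|^{2q}\le \|Du_\varepsilon\|_{L^\alpha(\Omega)}\,\bigl\||\nabla c_\varepsilon|^q\bigr\|_{L^{2\alpha'}(\Omega)}^{2}.
\]
For the velocity factor, the Sobolev embedding $W^{1,2}(\Omega)\hookrightarrow L^{6}(\Omega)$ together with the standard Stokes regularity estimate $\|D^2 u_\varepsilon\|_{L^2}\le C\|Au_\varepsilon\|_{L^2}$ yields $\|Du_\varepsilon\|_{L^{6}(\Omega)}\le C\|Au_\varepsilon\|_{L^2(\Omega)}$. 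Interpolating this between $L^6$ and $L^l$, where the uniform bound $\|Du_\varepsilon\|_{L^l(\Omega)}\le C$ for $l<\tfrac{3}{2}$ is supplied by Lemma \ref{lemma630jklhhjj}, gives
\[
\|Du_\varepsilon\|_{L^\alpha(\Omega)}\le C\bigl(\|Au_\varepsilon\|_{L^2(\Omega)}^{\theta_1}+1\bigr),\qquad \theta_1=\frac{6(\alpha-l)}{\alpha(6-l)}.
\]

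For the gradient-of-$c$ factor, observe that Lemma \ref{lemmakkllgg4563025xxhjklojjkkk} combined with Lemma \ref{ghjssdeedrfe116lemma70hhjj} implies that $\|\nabla c_\varepsilon\|_{L^2(\Omega)}$ is uniformly bounded, hence $\bigl\||\nabla c_\varepsilon|^q\bigr\|_{L^{2/q}(\Omega)}\le C$. Applying the three-dimensional Gagliardo--Nirenberg inequality to $|\nabla c_\varepsilon|^q$ between $\dot W^{1,2}$ and $L^{2/q}$ produces
\[
\bigl\||\nabla c_\varepsilon|^q\bigr\|_{L^{2\alpha'}(\Omega)}\le C\bigl(\bigl\|\nabla|\nabla c_\varepsilon|^q\bigr\|_{L^2(\Omega)}^{\theta_2}+1\bigr)
\]
with a Gagliardo--Nirenberg exponent $\theta_2\in(0,1)$ determined by the dimensional identity $\frac{1}{2\alpha'}=\frac{\theta_2}{6}+(1-\theta_2)\frac{q}{2}$.

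Combining the two estimates and applying Young's inequality in the form $a^{\theta_1}b^{2\theta_2}\le \delta a^2+\delta b^2+C$, valid whenever $\theta_1+2\theta_2<2$, yields the claimed bound. The main obstacle is precisely the bookkeeping showing that the Young compatibility $\theta_1+2\theta_2<2$ can be realised for some choice of $\alpha$ exactly when $q<\frac{2l+3}{3}$ with $l<\frac{3}{2}$: one has to express $\theta_1,\theta_2$ as explicit rational functions of $(\alpha,l,q)$, eliminate $\alpha$ by picking it optimally (essentially tuning so that the two Young exponents balance), and check that the resulting inequality on $q$ is equivalent to $q<\frac{2l+3}{3}$. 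Apart from this algebraic verification, every step is routine interpolation.
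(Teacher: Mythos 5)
Your skeleton (H\"older, then interpolation of each factor, then Young) is the same as the paper's, and your treatment of the velocity factor is essentially identical: the paper takes $\alpha=q+1$ and interpolates $\|Du_\varepsilon\|_{L^{q+1}}$ between $\|Au_\varepsilon\|_{L^2}$ and the uniform $L^l$ bound of Lemma \ref{lemma630jklhhjj}, obtaining the same exponent $\theta_1=\frac{6(q+1-l)}{(q+1)(6-l)}$ you wrote. The gap is in the $|\nabla c_\varepsilon|^{2q}$ factor, and it is not just bookkeeping: your interpolation is genuinely too weak to reach the stated range of $q$.

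Concretely, applying Gagliardo--Nirenberg to $w=|\nabla c_\varepsilon|^q$ with lower norm $\|w\|_{L^{2/q}}=\|\nabla c_\varepsilon\|_{L^2}^q$ gives, from $\frac{1}{2\alpha'}=\frac{\theta_2}{6}+(1-\theta_2)\frac{q}{2}$, the value $2\theta_2=\frac{6\left(q-1+\frac1\alpha\right)}{3q-1}$. The Young condition $\theta_1+2\theta_2<2$ is then affine in $s=\frac1\alpha$, namely $\frac{6-6ls}{6-l}+\frac{6(q-1+s)}{3q-1}<2$ for some $s\in[\tfrac16,\tfrac1l]$; its minimum over that interval is attained at an endpoint, and one checks that at $s=\tfrac16$ (i.e.\ $\alpha=6$, where the $L^l$ bound is not used at all) it reduces to $q<\tfrac43$, while at $s=\tfrac1l$ it requires $l>\tfrac32$, which is excluded. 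So no choice of $\alpha$ gets you beyond $q<\tfrac43$, whereas the lemma claims (and the application in Lemma \ref{lemma4563025xxhjklojjkkkgyhuissddff} requires) $q$ up to $\frac{2l+3}{3}$, i.e.\ close to $2$. The reason is that your lower norm only uses first-order information on $c_\varepsilon$ (boundedness of $\|\nabla c_\varepsilon\|_{L^2}$). The paper instead invokes Lemma \ref{drfe116lemma70hhjj} (Winkler's interpolation inequality), which trades against the zeroth-order bound $\|c_\varepsilon\|_{L^\infty}\le\|c_0\|_{L^\infty}$ and yields the strictly smaller exponent $\|\nabla c_\varepsilon\|_{L^{2(q+1)}}^{2q}\le C\left(\||\nabla c_\varepsilon|^{q-1}D^2c_\varepsilon\|_{L^2}^{\frac{2q}{q+1}}+1\right)$; one verifies $\frac{6q^2}{(q+1)(3q-1)}>\frac{2q}{q+1}$ for all $q>1$. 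With that exponent the Young condition becomes $\frac{6(q+1-l)}{(q+1)(6-l)}<\frac{2}{q+1}$, which is exactly $q<\frac{2l+3}{3}$. Replace your Gagliardo--Nirenberg step by Lemma \ref{drfe116lemma70hhjj} (with $\lambda=2q+2$) and the argument closes.
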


\begin{proof}
Firstly, applying
  the  H\"{o}lder inequality  leads
\begin{equation}
\begin{array}{rl}
J_3&:=\disp{ \int_\Omega |Du{}_{\varepsilon}| |\nabla c_{\varepsilon}|^{2{q}}}\\
&\leq\disp{\|Du_{\varepsilon}\|_{L^{q+1}(\Omega)}
 \|\nabla c_{\varepsilon}\|_{L^{2(q+1)}(\Omega)}^{2{q}}.}\\
\end{array}
\label{22cz2.57151hhkkhhhjukildrfttyuijkoghyu66}
\end{equation}
Due to Lemma \ref{lemma630jklhhjj}, there exists a positive constant $C_1$ such that
\begin{equation}\|Du_{\varepsilon}\|_{L^{l}(\Omega)}\leq C_{1},
\label{22cz2.57151hhkkhhhjukildrfttyuifffgggjkoghyu66}
\end{equation}
where $l<\frac{3}{2}$ is the  same as Lemma \ref{lemma630jklhhjj}.
Hence, using the Gagliardo--Nirenberg inequality and \dref{22cz2.57151hhkkhhhjukildrfttyuifffgggjkoghyu66}, since $q +1\leq\frac{2l+3}{3} +1 < 6$ we can find
$C_{1} $, $C_{2} $ and $C_{3} $ such that
\begin{equation}
\begin{array}{rl}
&\disp{\|Du_{\varepsilon}\|_{L^{q+1}(\Omega)}}
\\
\leq&\disp{C_{1}\|u_{\varepsilon}\|_{W^{2,2}(\Omega)}^{\frac{6(q+1-l)}{(q+1)(6-l)}}\|u_{\varepsilon}\|_{W^{1,l}(\Omega)}^{\frac{(5-q)l}{(q+1)(6-l)}}}
\\
\leq&\disp{C_{2}\|Au_{\varepsilon}\|_{L^{2}(\Omega)}^{\frac{6(q+1-l)}{(q+1)(6-l)}}\|Du_{\varepsilon}\|_{L^{l}(\Omega)}^{\frac{(5-q)l}{(q+1)(6-l)}}}
\\
\leq&\disp{C_{3}\|Au_{\varepsilon}\|_{L^{2}(\Omega)}^{\frac{6(q+1-l)}{(q+1)(6-l)}}.}
\\
\end{array}
\label{22cz2.563022222ikopl2sdfgggjjkkk66}
\end{equation}

Now, in light of  Lemma \ref{ghjssdeedrfe116lemma70hhjj} and
the Gagliardo--Nirenberg inequality that
we have
\begin{equation}
\begin{array}{rl}
 \|\nabla {c_{\varepsilon}}\|_{L^{2(q+1)}(\Omega)}^{2{q}}\leq&\disp{C_{4}(\||\nabla{c_{\varepsilon}}|^{q-1}D^2{c{}}\|_{L^2(\Omega)}
 ^{\frac{2q[2(2q+2)-6]}{(2q-1)(2q+2)}}
 \|{c_{\varepsilon}}\|_{L^\infty(\Omega)}^{\frac{2q[6q-2(q+1)]}{(2q-1)(2q+2)}}+\|{c_{\varepsilon}}\|_{L^\infty(\Omega)}^{2q})}
\\
\leq&\disp{C_{5}(\||\nabla{c_{\varepsilon}}|^{q-1}D^2{c_{\varepsilon}}\|_{L^2(\Omega)}^{\frac{2q}{q+1}}+1),}\\
\end{array}
\label{22czffsdefr2.563022222ikopl255}
\end{equation}
where 
 $C_{4}$ and  $C_{5}$ are  positive constants independent of $\varepsilon$.
Inserting \dref{cz2.563022222ikopl2sdfgggjjkkk66}--\dref{czsdefr2.563022222ikopl255} into \dref{cz2.57151hhkkhhhjukildrfttyuijkoghyu66} and using $q < \frac{2l+3}{3}$ and the Young inequality, we have
\begin{equation}
\begin{array}{rl}
J_3\leq&\disp{C_{6}\|Au_{\varepsilon}\|_{L^{2}(\Omega)}^{\frac{6(q+1-l)}{(q+1)(6-l)}}
(\||\nabla{c_{\varepsilon}}|^{q-1}D^2{c_{\varepsilon}}\|_{L^2(\Omega)}^{\frac{2q}{q+1}}+1)}\\
\leq&\disp{\delta\int_{\Omega} |Au_{\varepsilon}|^2+\delta\||\nabla{c_{\varepsilon}}|^{q-1}D^2{c_{\varepsilon}}\|_{L^2(\Omega)}^2
+C_{7}~~\mbox{for all}~~ t\in(0,T).  }\\
\end{array}
\label{223czff2.57151hhkkhhhjukildrftdfrtgyhugg}
\end{equation}
with some positive constants $C_6$ and $C_7.$
\end{proof}
\begin{lemma}\label{lemma4ggg563025xxhjklojjkkkgyhuirrtt}
Assuming that
${q}>1$.
If \begin{equation}
\|D u_{\varepsilon}(\cdot, t)\|_{L^{2}(\Omega)}\leq K~~ \mbox{for all}~~ t\in(0, T),\label{ddffcz2.5715cgghhcgghhvv1hhkkhhggjjllll}
\end{equation}
then for all
small $\delta>0$, the solution of \dref{1.1fghyuisda} from Lemma \ref{lemma70} satisfies
\begin{equation}
\begin{array}{rl}
&\disp{\int_\Omega |Du{}_{\varepsilon}||\nabla c_{\varepsilon}{}|^{2{q}}\leq
\delta\|\nabla |\nabla c_{\varepsilon}|^{q}\|_{L^2(\Omega)}^2+C ~~ \mbox{for all}~~ t\in(0, T), }\\
\end{array}
\label{cz2.5ghhjffddcvgggghhuyuigggihjjrftg}
\end{equation}
where a positive  constant $C$ depends on  
${q}$ and $\delta$.
\end{lemma}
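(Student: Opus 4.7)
The strategy mirrors Lemma \ref{lemma4563025xxhjklojjkkkgyhuirrtt}, but the stronger hypothesis \dref{ddffcz2.5715cgghhcgghhvv1hhkkhhggjjllll} that $Du_\varepsilon$ is bounded in $L^2(\Omega)$, rather than only in some $L^l(\Omega)$ with $l<3/2$, permits a much cleaner H\"older split and removes the $\delta\|Au_\varepsilon\|_{L^2}^2$ correction that appeared on the right-hand side of \dref{cz2.5ghhjuyuiihjjrftg}.

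My first step would be to apply H\"older's inequality with conjugate exponents $(2,2)$, together with the hypothesis, to obtain
$$
\int_\Omega |Du_\varepsilon||\nabla c_\varepsilon|^{2q}
\leq
\|Du_\varepsilon\|_{L^2(\Omega)}\bigl\||\nabla c_\varepsilon|^{2q}\bigr\|_{L^2(\Omega)}
\leq
K\,\|w_\varepsilon\|_{L^4(\Omega)}^2,
$$
where $w_\varepsilon:=|\nabla c_\varepsilon|^q$. The task then reduces to bounding $\|w_\varepsilon\|_{L^4(\Omega)}^2$ by a small multiple of $\|\nabla w_\varepsilon\|_{L^2(\Omega)}^2$ plus a constant, which is exactly what \dref{cz2.5ghhjffddcvgggghhuyuigggihjjrftg} requires.

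For this, I would invoke the three-dimensional Gagliardo--Nirenberg inequality
$$
\|w_\varepsilon\|_{L^4(\Omega)}
\leq
C_{\mathrm{GN}}\bigl(\|\nabla w_\varepsilon\|_{L^2(\Omega)}^{\theta}\|w_\varepsilon\|_{L^{2/q}(\Omega)}^{1-\theta}+\|w_\varepsilon\|_{L^{2/q}(\Omega)}\bigr),
$$
with $\theta\in(0,1)$ determined by the scaling identity $\frac{1}{4}=\frac{\theta}{6}+\frac{(1-\theta)q}{2}$, namely $\theta=\frac{3(2q-1)}{2(3q-1)}$. Two observations make this bound effective. First, $\|w_\varepsilon\|_{L^{2/q}(\Omega)}=\|\nabla c_\varepsilon\|_{L^2(\Omega)}^{q}$, and $\|\nabla c_\varepsilon\|_{L^2(\Omega)}$ is bounded uniformly in $t$ because \dref{czfvgb2.5ghhjuyuiihjj} combined with \dref{hnjmssddaqwswddaassffssff3.10deerfgghhjuuloollgghhhyhh} yields $\int_\Omega |\nabla c_\varepsilon|^2 = 4\int_\Omega c_\varepsilon |\nabla\sqrt{c_\varepsilon}|^2 \leq 4\|c_0\|_{L^\infty(\Omega)}\,\|\nabla\sqrt{c_\varepsilon}\|_{L^2(\Omega)}^2$. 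Second, the doubled interpolation exponent $2\theta=\frac{3(2q-1)}{3q-1}$ is strictly less than $2$ for every $q>1$, since $3(2q-1)-2(3q-1)=-1<0$.

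Squaring the Gagliardo--Nirenberg estimate and exploiting this subquadratic growth in $\|\nabla w_\varepsilon\|_{L^2(\Omega)}$, Young's inequality converts $K\|w_\varepsilon\|_{L^4(\Omega)}^2$ into $\delta\,\|\nabla w_\varepsilon\|_{L^2(\Omega)}^2+C$, which, after unwinding the definition of $w_\varepsilon$, is precisely \dref{cz2.5ghhjffddcvgggghhuyuigggihjjrftg}. The only nontrivial obstacle is the verification of the subquadratic exponent $2\theta<2$ for all $q>1$; in the regime $1<q<2$ that is relevant to the subsequent derivation of \dref{ghbbnnnnmmnjjffffxcvvvvvvffgg1.1hhjjddssggtyy}, the auxiliary exponent $r=2/q$ also satisfies $r\geq 1$ so that the Gagliardo--Nirenberg inequality applies in its standard form, and no additional technicalities arise.
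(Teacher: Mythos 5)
Your proposal is correct in substance and its opening move coincides with the paper's: both apply H\"older with exponents $(2,2)$ and the hypothesis \dref{ddffcz2.5715cgghhcgghhvv1hhkkhhggjjllll} to reduce the problem to bounding $\|\nabla c_\varepsilon\|_{L^{4q}(\Omega)}^{2q}$ subquadratically in the dissipation. You diverge at the interpolation step. The paper invokes Lemma \ref{drfe116lemma70hhjj} with $\lambda=4q$, which interpolates $\|\nabla c_\varepsilon\|_{L^{4q}}$ between $\||\nabla c_\varepsilon|^{q-1}D^2c_\varepsilon\|_{L^2}$ and $\|c_\varepsilon\|_{L^\infty}$, so it needs only the pointwise bound \dref{hnjmssddaqwswddaassffssff3.10deerfgghhjuuloollgghhhyhh} and no restriction on $m$ or $q$; Young's inequality then closes the argument since the resulting exponent $\tfrac{4q-3}{2q-1}$ is below $2$. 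You instead apply the standard Gagliardo--Nirenberg inequality to $w_\varepsilon=|\nabla c_\varepsilon|^q$ with lower-order norm $\|w_\varepsilon\|_{L^{2/q}}=\|\nabla c_\varepsilon\|_{L^2}^q$, controlled through the quasi-energy estimate \dref{czfvgb2.5ghhjuyuiihjj}; your exponent computation ($\theta=\tfrac{3(2q-1)}{2(3q-1)}$, $2\theta<2$) is correct. Your route has the cosmetic advantage of landing directly on $\|\nabla|\nabla c_\varepsilon|^q\|_{L^2}^2$ as in \dref{cz2.5ghhjffddcvgggghhuyuigggihjjrftg} (the paper's proof actually produces the $D^2c_\varepsilon$ norm), but it carries two costs: it silently imports the hypothesis $\tfrac{10}{9}<m\le2$ through Lemma \ref{lemmakkllgg4563025xxhjklojjkkk}, which is not assumed in the statement (though it does hold wherever the lemma is used); and for $q\ge2$ the auxiliary exponent $2/q$ drops below $1$, so restricting to $1<q<2$ is not actually harmless --- the lemma is invoked for arbitrary $q>1$ in Step 2 of the proof of Lemma \ref{lemma4563025xxhjklojjkkkgyhuissddff}. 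The latter gap is easily repaired by the version of Gagliardo--Nirenberg admitting $L^s$ quasi-norms with $s\in(0,1)$, which the paper itself already uses in \dref{cz2.563022222ikopl2sdfg44}, so the argument survives, but you should cite that version rather than confining yourself to $1<q<2$.
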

\begin{proof}
Firstly, using the  H\"{o}lder inequality,
 we find
\begin{equation}
\begin{array}{rl}
J_3&:=\disp{ \int_\Omega |Du{}_{\varepsilon}| |\nabla c_{\varepsilon}|^{2{q}}}\\
&\leq\disp{\|Du_{\varepsilon}\|_{L^{2}(\Omega)}
 \|\nabla c_{\varepsilon}\|_{L^{4q}(\Omega)}^{2{q}}}\\
 &\leq\disp{K
 \|\nabla c_{\varepsilon}\|_{L^{4q}(\Omega)}^{2{q}}~~ \mbox{for all}~~ t\in(0, T).}\\
\end{array}
\label{cz2.57151hhkkhhhjukildddrfttyuijkoghyu66}
\end{equation}
Now, it then follows from  Lemma \ref{ghjssdeedrfe116lemma70hhjj} and
the Gagliardo--Nirenberg inequality that
we have
\begin{equation}
\begin{array}{rl}
 \|\nabla {c_{\varepsilon}}\|_{L^{4q}(\Omega)}^{2{q}}\leq&\disp{C_{1}(\||\nabla{c_{\varepsilon}}|^{q-1}D^2{c{}}\|_{L^2(\Omega)}
 ^{\frac{4q[4q-3]}{(2q-1)4q}}
 \|{c_{\varepsilon}}\|_{L^\infty(\Omega)}^{\frac{2q[6q-4q]}{(2q-1)4q}}+\|{c_{\varepsilon}}\|_{L^\infty(\Omega)}^{2q})}
\\
\leq&\disp{C_{2}(\||\nabla{c_{\varepsilon}}|^{q-1}D^2{c_{\varepsilon}}\|_{L^2(\Omega)}^{\frac{2q-3}{2q-1}}+1),}\\
\end{array}
\label{czffsdefr2.5630222ddff22ikopl255}
\end{equation}
where 
some positive constants $C_{1}, C_{2}$.
Inserting \dref{czffsdefr2.5630222ddff22ikopl255} into \dref{cz2.57151hhkkhhhjukildddrfttyuijkoghyu66} and employing the Young inequality, we can get \dref{cz2.5ghhjffddcvgggghhuyuigggihjjrftg}.
\end{proof}

\begin{lemma}\label{lemma4563025xxhjklojjkkkgyhuissddff}
Assuming that $m>\frac{9}{8}$.
Then for all $p>1$ and ${q}>1$,
\begin{equation}\|{n}_\varepsilon(\cdot, t)\|_{L^p(\Omega)}+\|\nabla {c}_{\varepsilon}(\cdot, t)\|_{L^{2{q}}(\Omega)}\leq C~~ \mbox{for all}~~ t\in(0, T), \label{ddffcz2.5715ccgghhvv1hhkkhvvbnnhggjjllll}
\end{equation}
 where $T$ is given by \dref{hjnmkcz2.5ffvvbbghhjuyuiihjjrftg}.
\end{lemma}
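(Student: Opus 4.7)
The plan is to combine the differential inequalities for $\|n_\varepsilon\|_{L^p}^p$, $\|\nabla c_\varepsilon\|_{L^{2q}}^{2q}$, and $\|A^{\frac{1}{2}}u_\varepsilon\|_{L^2}^2$ provided by Lemmata \ref{lemmaghjmk4563025xxhjklojjkkk}, \ref{lemmaghjmk45ss63025xxhjklojjkkk}, and \ref{ffglffhhemmaccvbbfggg78630jklhhjj}, and to absorb the right-hand sides by the interpolation estimates of Lemmata \ref{lemma4563025xxhjklojjkkkgyhuihhu}--\ref{lemma4ggg563025xxhjklojjkkkgyhuirrtt}. Rewriting the dissipation term of Lemma \ref{lemmaghjmk4563025xxhjklojjkkk} as $\int_{\Omega} n_\varepsilon^{m+p-3}|\nabla n_\varepsilon|^2 = \tfrac{4}{(m+p-1)^2}\int_{\Omega} |\nabla n_\varepsilon^{(m+p-1)/2}|^2$, a linear combination of the three identities with suitably chosen positive coefficients yields
\[
\frac{d}{dt}\Phi(t) + \mathcal{D}(t) \leq C\bigl(J_1 + J_2 + J_3 + \textstyle\int_{\Omega} n_\varepsilon^2\bigr),
\]
where $\Phi(t)=\tfrac{1}{p}\|n_\varepsilon\|_{L^p}^p+\tfrac{1}{2q}\|\nabla c_\varepsilon\|_{L^{2q}}^{2q}+A\|A^{1/2}u_\varepsilon\|_{L^2}^2$, the symbol $\mathcal{D}(t)$ collects the four dissipation terms $\int|\nabla n_\varepsilon^{(m+p-1)/2}|^2$, $\int\bigl|\nabla|\nabla c_\varepsilon|^q\bigr|^2$, $\int|\nabla c_\varepsilon|^{2q-2}|D^2 c_\varepsilon|^2$, $\int|Au_\varepsilon|^2$, and $J_1,J_2,J_3$ denote the integrals handled in Lemmata \ref{lemma4563025xxhjklojjkkkgyhuihhu}, \ref{lemma4563025xxhjklojjkkkgyhuiyuiko}, \ref{lemma4ggg563025xxhjklojjkkkgyhuirrtt}.

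The core of the argument is a two-phase bootstrap on the exponent $p_0$ of Lemma \ref{lemmafcvg4563025xxhjklojjkkkgyhuihhu}. In Phase~1 I start from $p_0\in[1,9(m-1))$ supplied directly by Lemma \ref{lemmafcvg4563025xxhjklojjkkkgyhuihhu}, together with the rough fluid bound $\|Du_\varepsilon\|_{L^l}\leq C$ for some $l<\frac{3}{2}$ obtained from Lemma \ref{lemmafggg78630jklhhjj} applied with $r=1$. Since $m>\frac{9}{8}$ the interval $[1,9(m-1))$ strictly extends beyond $1$, so I may pick $p_0\in(1,9(m-1))$ and $q\in(1,\frac{2l+3}{3})$; then the conditions of Lemmata \ref{lemma4563025xxhjklojjkkkgyhuihhu} and \ref{lemma4563025xxhjklojjkkkgyhuiyuiko} open a non-empty admissibility window for $p$, and the Young inequality absorbs $J_1,J_2,J_3$ and $\int n_\varepsilon^2$ into $\mathcal{D}(t)$ up to a constant. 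An ODE argument applied to the resulting inequality produces $\|n_\varepsilon(\cdot,t)\|_{L^{p_1}}+\|\nabla c_\varepsilon(\cdot,t)\|_{L^{2q_1}}\leq C$ for some $p_1>\frac{6}{5}$ and $q_1\in(1,2)$.

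In Phase~2 I invoke Lemma \ref{lemmafggg78630jklhhjj} with $r=p_1>\frac{6}{5}$ to upgrade $\|Du_\varepsilon\|_{L^2}\leq C$, which activates Lemma \ref{lemma4ggg563025xxhjklojjkkkgyhuirrtt} in place of Lemma \ref{lemma4563025xxhjklojjkkkgyhuirrtt} and thereby removes the ceiling $q<2$. Feeding the freshly obtained bound as the new value $p_0:=p_1$ into Lemmata \ref{lemma4563025xxhjklojjkkkgyhuihhu} and \ref{lemma4563025xxhjklojjkkkgyhuiyuiko} enlarges the admissible $p$-window linearly in both $p_0$ and $q$, so repeating the energy step produces $\|n_\varepsilon\|_{L^{p_2}}+\|\nabla c_\varepsilon\|_{L^{2q_2}}\leq C$ with $p_2>p_1$. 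Iterating finitely many times reaches every prescribed $p>1$ and $q>1$, which establishes \eqref{ddffcz2.5715ccgghhvv1hhkkhvvbnnhggjjllll}.

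The main obstacle is Phase~1: when only $p_0<9(m-1)$ and $q<2$ are available, one must verify that the lower and upper bounds on $p$ imposed by Lemmata \ref{lemma4563025xxhjklojjkkkgyhuihhu} and \ref{lemma4563025xxhjklojjkkkgyhuiyuiko} are simultaneously compatible. This is exactly where the sharp threshold $m>\frac{9}{8}$ enters, and it is the new gradient-like functional \eqref{ghbbnnnnmmnjjffff1.1hhjjddssggtyy} established in Lemma \ref{lemmakkllgg4563025xxhjklojjkkk} that supplies the starting $L^{p_0}$-control of $n_\varepsilon$ required to initialize the bootstrap.
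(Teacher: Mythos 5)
Your proposal is correct and follows essentially the same route as the paper: the same linear combination of the energy identities for $\int_\Omega n_\varepsilon^p$, $\int_\Omega|\nabla c_\varepsilon|^{2q}$ and $\int_\Omega|A^{1/2}u_\varepsilon|^2$, absorption via Lemmata \ref{lemma4563025xxhjklojjkkkgyhuihhu}--\ref{lemma4ggg563025xxhjklojjkkkgyhuirrtt}, and the same two-step structure in which a first pass (with $p_0$ close to $9(m-1)$, $q<2$) yields an $L^{p}$-bound on $n_\varepsilon$ with $p>\frac{6}{5}$, hence $\|Du_\varepsilon\|_{L^2}\leq C$ via Lemma \ref{lemmafggg78630jklhhjj}, after which the $q<2$ ceiling disappears. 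The only cosmetic difference is that your Phase~2 iterates on $p_0$, whereas the paper needs no such bootstrap: once $\|Du_\varepsilon\|_{L^2}$ is controlled, the upper end $[2(m-1)+\frac{2p_0}{3}]q+m-1$ of the admissible $p$-window already tends to infinity with $q$, so a single further pass reaches all $p>1$ and $q>1$.
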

\begin{proof}
We only need to prove case $2\geq m>\frac{9}{8}$.
Since $m>2$, employing almost exactly the same arguments as in the proof of Lemma 4.1 in \cite{Zhengsdsd6} (see also \cite{Winkler11215})
(the minor necessary changes are left as an easy exercise to the reader), 
we conclude the estimate
\dref{ddffcz2.5715ccgghhvv1hhkkhvvbnnhggjjllll}.
%
%
%
%
%
%
%

Case $2\geq m>\frac{9}{8}$.
We divide the proof into two steps.

Step 1. We first make sure that there exists $C_{1}>0$ such that
\begin{equation}
\|D u_{\varepsilon}(\cdot, t)\|_{L^{2}(\Omega)}\leq C_{1}~~ \mbox{for all}~~ t\in(0, T).\label{ddffcz2.5715ccgghhvv1hhkkhhggjjllll}
\end{equation}

To achieve this,
in light of $m>\frac{9}{8},$ $p_0\geq1$ and $q>1$, then
\begin{equation}
\begin{array}{rl}
[2(m-1)+\frac{2p_0}{3}]q+m-1>&3(m-1)+\frac{2p_0}{3}\\
>&3\times\frac{1}{9}+\frac{2}{3}=1,
\end{array}
\label{cz2.57151hnnnnhkffgssdffhhkhhggjjllll}
\end{equation}
\begin{equation}
\begin{array}{rl}
[2(m-1)+\frac{2p_0}{3}]q+m-1>&m-1+\frac{2p_0}{3}q\\
>&m-1+p_0\frac{q}{q+1}
\end{array}
\label{cz2.57151hnnvvnnhkffgssdffhhkhhggjjllll}
\end{equation}
and
\begin{equation}
\begin{array}{rl}
\left\{[2(m-1)+\frac{2p_0}{3}]q+m-1\right\}-\left\{\frac{q+1}{3}+1-m\right\}=&2(m-1)(q-1)+(\frac{2p_0}{3}-\frac{1}{3})q\\
>&\frac{1}{3}q>0.
\end{array}
\label{cz2.57151hnnccvvnnhkffgssdffhhkhhggjjllll}
\end{equation}
Now, since $p_0\in [1,9(m-1))$ is arbitrary, with the help of $m>\frac{9}{8}$, we can finally pick $p_0<9(m-1)$
 sufficiently close
to
$9(m-1)$ such that
$$
2(m-1)+\frac{2p_0}{3}-1>0,
$$
which implies that for all $q>1$ and $p_0<9(m-1)$ sufficiently close $9(m-1)$
to
\begin{equation}
\begin{array}{rl}
q+2-\frac{2p_0}{3}-m<[2(m-1)+\frac{2p_0}{3}]q+m-1,
\end{array}
\label{cz2.57151hhkffgccvvvvssdffhhkhhggjjllll}
\end{equation}
which along with \dref{cz2.57151hnnnnhkffgssdffhhkhhggjjllll}--\dref{cz2.57151hnnccvvnnhkffgssdffhhkhhggjjllll} yields that
\begin{equation}
\begin{array}{rl}
&\max\{1,m-1+p_0\frac{q}{q+1},\frac{q+1}{3}+1-m,q+2-\frac{2p_0}{3}-m\}\\
<&[2(m-1)+\frac{2p_0}{3}]q+m-1
~~\mbox{for all}~~q>1.
\end{array}
\label{cz2.57151hhkffgssdffhhkhhggjjllll}
\end{equation}
Now, in view of $l<\frac{3}{2}$ is arbitrary, we can finally pick $q_0<2$
 sufficiently close
to
$2$ such that $q_0<\frac{2l+3}{3}$.
 Now, we choose $q=q_0$ in \dref{cz2.57151hhkffgssdffhhkhhggjjllll}, $$p\in(\max\{1,m-1+p_0\frac{q_0}{q_0+1},\frac{q_0+1}{3}+1-m,q_0+2-\frac{2p_0}{3}-m\},[2(m-1)+\frac{2p_0}{3}]q_0+m-1)$$ and $\delta$ small enough in Lemma \ref{ffglffhhemmaccvbbfggg78630jklhhjj}--Lemma \ref{lemma4563025xxhjklojjkkkgyhuirrtt},
then by Lemma \ref{lemmaghjmk4563025xxhjklojjkkk}, we conclude that
\begin{equation}
\begin{array}{rl}
&\disp{\frac{d}{dt}\left(\int_{\Omega}n^{p}_{\varepsilon} +\int_{\Omega} |\nabla {c}_{\varepsilon}|^{2{q_0}}+\int_{\Omega}|A^{\frac{1}{2}}u_{\varepsilon}|^2 \right)+\frac{(p-1)}{4C_D}\int_{\Omega}n_{\varepsilon}^{m+p-3} |\nabla n_{\varepsilon}|^2}\\
&+\disp{\frac{({q_0}-1)}{{{q_0}^2}}\int_{\Omega}\left|\nabla |\nabla {c}_{\varepsilon}|^{{q_0}}\right|^2+\frac{1}{4}\int_\Omega  |\nabla {c}_{\varepsilon}|^{2{q_0}-2}|D^2c_{\varepsilon}|^2+\int_{\Omega}|Au_{\varepsilon}|^2}\\
\leq&\disp{C_{2}}\\
\end{array}
\label{cz2.57151hhkkhhggjjllll}
\end{equation}
with some positive constant $C_2.$
Assuming that  $y:=\int_{\Omega}n^{p}_{\varepsilon} +\int_{\Omega} |\nabla {c}_{\varepsilon}|^{2{q_0}}+\int_{\Omega}|A^{\frac{1}{2}}u_{\varepsilon}|^2 $,
then \dref{cz2.57151hhkkhhggjjllll} implies that there exist positive constants $C_3$ and $C_4$ such that
$$\frac{d}{dt}y(t)+C_{3}y^h(t)\leq C_{4},$$
where $h$ is a positive constant.
Then an ODE comparison argument yields  that there exists a positive constant $C_5$ independent of $\varepsilon$ such that
\begin{equation}
\|n_{\varepsilon}(\cdot, t)\|_{L^p(\Omega)} +\|\nabla {c}_{\varepsilon}(\cdot, t)\|_{L^{2{q_0}}(\Omega)}+\|A^{\frac{1}{2}}u_{\varepsilon}(\cdot, t)\|_{L^2(\Omega)}\leq C_{5}~~ \mbox{for all}~~ t\in(0, T),\label{cz2.5715ccvv1hhkkhhggjjllll}
\end{equation}
which together with $q_0<2$, $p<[2(m-1)+\frac{2p_0}{3}]q_0+m-1$, $p_0<9(m-1)$
 sufficiently close
to
$9(m-1)$ and $2\geq m>\frac{9}{8}$ imply that
\begin{equation}
\|n_{\varepsilon}(\cdot, t)\|_{L^{\frac{7}{4}}(\Omega)}\leq C_{6}~~ \mbox{for all}~~ t\in(0, T).\label{cz2.5715ccgghhvv1hhkkhhggjjllll}
\end{equation}
with some positive constant $C_6$.
Therefore, by Lemma \ref{lemmafggg78630jklhhjj}, we can get \dref{ddffcz2.5715ccgghhvv1hhkkhhggjjllll}.

Step 2. We proceed to prove the statement of the lemma.
To this end, by \dref{ddffcz2.5715ccgghhvv1hhkkhhggjjllll}, we can get \dref{ddffcz2.5715cgghhcgghhvv1hhkkhhggjjllll}, hence,
\dref{cz2.5ghhjffddcvgggghhuyuigggihjjrftg} holds.
Now, inserting \dref{cz2.5ghhjuyuiihddfffjjrftg}, \dref{cz2.5ghhjuyuiihjjhhhhrftg},  \dref{cz2.5ghhjuyuiihjjrftg} and \dref{cz2.5ghhjffddcvgggghhuyuigggihjjrftg} into \dref{cz2.5ghhjuyuiihjj} and using \dref{cz2.57151hhkffgssdffhhkhhggjjllll} yields to
\begin{equation}
\begin{array}{rl}
&\disp{\frac{d}{dt}\left(\int_{\Omega}n^{p}_{\varepsilon} +\int_{\Omega} |\nabla {c}_{\varepsilon}|^{2{q}} \right)+\frac{(p-1)}{4C_D}\int_{\Omega}n_{\varepsilon}^{m+p-3} |\nabla n_{\varepsilon}|^2}\\
&+\disp{\frac{({q}-1)}{{{q}^2}}\int_{\Omega}\left|\nabla |\nabla {c}_{\varepsilon}|^{{q}}\right|^2+\frac{1}{4}\int_\Omega  |\nabla {c}_{\varepsilon}|^{2{q}-2}|D^2c_{\varepsilon}|^2}\\
\leq&\disp{C_{7},}\\
\end{array}
\label{cz2.57151hhjjjkkhhggjjllll}
\end{equation}
where $C_7$ is a positive constant independent of $\varepsilon$.
Here we have picked the $\delta$ small enough. Employing  the same arguments as in the proof of step 1,
we conclude \dref{ddffcz2.5715ccgghhvv1hhkkhvvbnnhggjjllll}.
The proof of Lemma \ref{lemma4563025xxhjklojjkkkgyhuissddff} is complete.
\end{proof}

\section{The proof of main results}

In preparation of an Aubin-Lions type compactness argument, we intend to supplement Lemma \ref{lemma4563025xxhjklojjkkkgyhuissddff}
with bounds on time-derivatives.
With these estimates, we can construct weak solutions by   means of a standard extraction procedure, therefore,  we only give general idea, one can see \cite{Zhengsdsd6,Winkler11215} for more details.

{\bf The proof of Theorem \ref{theorem3}}~
Firstly, choosing $p=2$ in \dref{ddffcz2.5715ccgghhvv1hhkkhvvbnnhggjjllll}, we derive that
\begin{equation}\|n_{\varepsilon}(\cdot,t)\|_{L^2(\Omega)} \leq C_1~~\mbox{for all}~~ t\in(0,T)
\label{ggzjscz2.5297x96dd2234530111kkhhffrreerr}
\end{equation}
with some positive constant $C_1.$
Now, in view of \dref{ggzjscz2.5297x96dd2234530111kkhhffrreerr}, employing  the variation-of-constants formula for $u_{\varepsilon}$ and by the properties of the
Stokes semigroup, we conclude that there exists a positive constant $C_2$ such that
\begin{equation}
\|A^\gamma u_{\varepsilon}(\cdot,t)\|_{L^{2}(\Omega)}  \leq C_2~~\mbox{for all}~~ t\in(0,T),
\label{ggzjscz2.5297x9630111kkhhffrreerr}
\end{equation}
where $\gamma\in(\frac{3}{4},1)$ and $T$ are the same as \dref{ccvvx1.731426677gg} and \dref{hjnmkcz2.5ffvvbbghhjuyuiihjjrftg}, respectively.
Since, $D(A^\gamma)$ is continuously embedded into $L^\infty(\Omega)$, hence, in view of \dref{ggzjscz2.5297x9630111kkhhffrreerr}, we may find that
 \begin{equation}
\begin{array}{rl}
\|u_\varepsilon(\cdot, t)\|_{L^\infty(\Omega)}\leq C_3~~ \mbox{for all}~~ t\in(0,T).\\
\end{array}
\label{cz2.571ttgyhhuuy5jkkcvccvvhjjjkddfffffkhhgll}
\end{equation}
for some positive constant $C_3.$
Next, choosing $q=2$ in \dref{ddffcz2.5715ccgghhvv1hhkkhvvbnnhggjjllll}, we get that
\begin{equation}\|\nabla c_{\varepsilon}(\cdot,t)\|_{L^{4}(\Omega)} \leq C_4~~\mbox{for all}~~ t\in(0,T),
\label{ggzjscz2.529uuu7x96dd2234530111kkhhffrreerr}
\end{equation}
where $C_4$ is positive constant independent of $\varepsilon.$
Hence, in light of  \dref{cz2.571ttgyhhuuy5jkkcvccvvhjjjkddfffffkhhgll}--\dref{ggzjscz2.529uuu7x96dd2234530111kkhhffrreerr} and the $L^p$-$L^q$ estimates associated heat semigroup, we derive that there exists a positive constant $C_5$
such that
\begin{equation}
\|c_{\varepsilon}(\cdot,t)\|_{W^{1,\infty}(\Omega)}  \leq C_5 ~~\mbox{for all}~~ t\in(0,T).
\label{zjscz2.5297x9630111kkhh}
\end{equation}
Now, due to \dref{cz2.571ttgyhhuuy5jkkcvccvvhjjjkddfffffkhhgll} and \dref{zjscz2.5297x9630111kkhh}, we may use the standard
Moser-type iteration to conclude that there exists a positive constant $C_6$ such that
 \begin{equation}
\|n_{\varepsilon}(\cdot,t)\|_{L^\infty(\Omega)}  \leq C_6 ~~\mbox{for all}~~ t\in(0,T).
\label{zjscz2.5297x9630111kk}
\end{equation}
%

Next,
  with the help of \dref{cz2.571ttgyhhuuy5jkkcvccvvhjjjkddfffffkhhgll}, \dref{zjscz2.5297x9630111kkhh} and \dref{zjscz2.5297x9630111kk}, for all $\varepsilon\in(0,1),$ we can fix a positive constants $C_7$ such that
 \begin{equation}
n_\varepsilon\leq C_7,|\nabla c_\varepsilon|  \leq C_7~~\mbox{and}~~|u_\varepsilon|  \leq C_7 ~~\mbox{in}~~ \Omega\times(0,T).
\label{gbhnzjscz2.5297x9630111kkhhiioo}
\end{equation}
Now, employing the same argument of Lemma 5.1 of \cite{Zhengsdsd6} (see also Lemmata 3.22 and 3.23  of \cite{Winkler11215}), we may derive that
there exists a positive constant $C_8$ such that
\begin{equation}
\|\partial_tn_\varepsilon(\cdot,t)\|_{(W^{2,2}_0(\Omega))^*}  \leq C_8 ~~\mbox{for all}~~ t>0
\label{hjjjzjscz2.5297x9630111kkhhiioott}
\end{equation}
as well as
\begin{equation}
\int_0^T\|\partial_tn_\varepsilon^\varsigma(\cdot,t)\|_{(W^{3,2}_0(\Omega))^*}dt  \leq C_8(T+1)
\label{zkkllljscz2.5297x9630111kkhhiioott4}
\end{equation}
and
 \begin{equation}
\int_{0}^T\int_{\Omega}n_{\varepsilon}^{m+p-3} |\nabla n_{\varepsilon}|^2\leq C_8T,
\label{fvgbhzjscz2.5297x96302222tt4455hyuhii}
 \end{equation}
 where $p := 2\zeta-m+1$ and $\varsigma> m$ satisfying $\varsigma\geq2(m-1)$.

Now, in conjunction with \dref{ggzjscz2.5297x9630111kkhhffrreerr}, \dref{cz2.571ttgyhhuuy5jkkcvccvvhjjjkddfffffkhhgll}, \dref{zjscz2.5297x9630111kkhh} and \dref{zjscz2.5297x9630111kk}
 and the Aubin--Lions compactness lemma (\cite{Simon}), we thus
infer that there exists a sequence of numbers $\varepsilon: = \varepsilon_j \searrow0$ such that
\begin{equation}
 n_\varepsilon\rightharpoonup n ~~\mbox{weakly star in}~~ L^\infty_{loc}(\Omega\times(0,\infty)),
 \label{ffzjscz2.5297x9630222222ee}
\end{equation}
\begin{equation}
n_\varepsilon\rightarrow n ~~\mbox{in}~~ L^\infty_{loc}([0,\infty); (W^{3,2}_0 (\Omega))^*),
\label{ffzjscz2.5297x96302222tt}
\end{equation}
\begin{equation}
c_\varepsilon\rightarrow c ~~~ ~~\mbox{a.e.}~~ \mbox{in}~~ \Omega\times (0,\infty),
 \label{ffzjscz2.5297x96302222tt3}
\end{equation}
\begin{equation}
\nabla c_\varepsilon\rightarrow \nabla c ~~\mbox{in}~~ L^\infty_{loc}(\bar{\Omega}\times[0,\infty)),
 \label{ffzjscz2.5297x96302222tt4}
\end{equation}
\begin{equation}
u_\varepsilon\rightarrow u ~~\mbox{a.e.}~~ \mbox{in}~~ \Omega\times (0,\infty),
 \label{ffzjscz2.5297x96302222tt44}
\end{equation}
and
\begin{equation}
D u_\varepsilon\rightharpoonup Du ~~\mbox{weakly in}~~L^{\infty}_{loc}(\Omega\times[0,\infty))
 \label{ffzjscz2.5297x96302222tt4455}
\end{equation}
holds for some triple  $(n, c, u)$.

%
Since $p = 2\zeta-m+1$, then by \dref{fvgbhzjscz2.5297x96302222tt4455hyuhii} implies
that
for each
$T > 0,$ $(n_{\varepsilon}^\varsigma)_{\varepsilon\in(0,1)}$ is bounded in $L^2((0, T);W^{1,2}(\Omega))$.
 With the help of \dref{zkkllljscz2.5297x9630111kkhhiioott4}, we  also show that
$$(\partial_{t}n_{\varepsilon}^\varsigma)_{\varepsilon\in(0,1)}~~\mbox{is bounded in}~~L^1((0, T); (W^{3,2}_0(\Omega))^*)~~\mbox{for each}~~  T > 0.$$
Hence, an Aubin-Lions lemma (see e.g. \cite{Simon})
applies to the above inequality we have the strong precompactness of
%
%
%
$(n_{\varepsilon}^\varsigma)_{\varepsilon\in(0,1)}$ in $L^2(\Omega\times(0, T))$.
Therefore,
we can
  pick a
suitable subsequence such that $n_{\varepsilon}^\varsigma\rightarrow z^\varsigma$ for some nonnegative measurable $z:\Omega\times(0,\Omega)\rightarrow\mathbb{R}$.
 In
light of  \dref{ffzjscz2.5297x9630222222ee} and the Egorov theorem, we have $z = n$ necessarily, so that
\begin{equation} n_\varepsilon\rightarrow n ~~\mbox{a.e.}~~ \mbox{in}~~ \Omega\times (0,\infty)
\label{ffzjscz2.5297x9630222222ghjj}
\end{equation}
is valid.
Now, in light of \dref{ffzjscz2.5297x9630222222ee},
 \dref{ffzjscz2.5297x96302222tt4},
and \dref{ffzjscz2.5297x96302222tt4455}, we derive that \dref{zj233455scz2.529711234566x9630111kk} is hold.

Now, we will prove that $(n, c,u)$ is the global weak solution of \dref{1.1}.
To this end, by \dref{ffzjscz2.5297x9630222222ghjj}, \dref{ffzjscz2.5297x9630222222ee}, \dref{ffzjscz2.5297x96302222tt3}, \dref{ffzjscz2.5297x96302222tt4}
and \dref{ffzjscz2.5297x96302222tt44}, we have
$n$, $c$ are nonnegative  and \dref{dffff1.1fghyuisdakkklll}
 and \dref{726291hh} are valid.
Next, with the help of the standard arguments,  letting $\varepsilon = \varepsilon_j \searrow0$ in the approximate system \dref{1.1fghyuisda} and using
\dref{ffzjscz2.5297x9630222222ghjj}--\dref{ffzjscz2.5297x9630222222ee} and
\dref{ffzjscz2.5297x96302222tt3}--\dref{ffzjscz2.5297x96302222tt4455}, we can get \dref{eqx45xx12112ccgghh}--\dref{eqx45xx12112ccgghhjjgghh}.
%

{\bf Acknowledgement}:
This work is partially supported by  the National Natural Science Foundation of China (No. 11601215),
the
Natural Science Foundation of Shandong Province of China (No. ZR2016AQ17) and the Doctor Start-up Funding of Ludong University (No. LA2016006).

\end{document}